\numberwithin{equation}{section}
\newcommand{\C}{{\mathbb C} }
\newcommand{\R}{{\mathbb R} }
\newcommand{\cA}{{\mathcal A} }
\newcommand{\cB}{{\mathcal B} }
\newcommand{\cC}{{\mathcal C} }
\newcommand{\cD}{{\mathcal D} }
\newcommand{\cE}{{\mathcal E} }
\newcommand{\cF}{{\mathcal F} }
\newcommand{\cG}{{\mathcal G} }
\newcommand{\cL}{{\mathcal L} }
\newcommand{\cN}{{\mathcal N} }
\newcommand{\cO}{{\mathcal O} }
\newcommand{\cX}{{\mathcal X} }
\newcommand{\cH}{{\mathcal H} }
\newcommand{\cK}{{\mathcal K} }
\newcommand{\cR}{{\mathcal R} }
\newcommand{\wt}{\widetilde}
\newcommand{\pt}{\partial}
\def\inv#1{{#1}^{-1}}
\def\ol#1{{\overline{#1}}}
\def\ul#1{{\underline{#1}}}
\newtheorem{theorem}{Theorem}[section]
\newtheorem{definition}{Definition}[section]
\newtheorem{lemma}{Lemma}[section]
\newtheorem*{remark*}{Remark}
\newtheorem{proposition}{Proposition}[section]
\newtheorem{corollary}{Corollary}[section]
\def\ks{Ko\-dai\-ra-Spen\-cer }
\def\ka{K{\"a}h\-ler }
\def\he{Her\-mite-Ein\-stein }
\def\kh{Ko\-ba\-ya\-shi-Hit\-chin }
\def\wp{Weil-Pe\-ters\-son }
\def\ii{\sqrt{-1}}
\def\idb{\sqrt{-1}\partial\overline{\partial} }
\def\C{\mathbb{C}}
\def\cinf{C^\infty}
\def\rk{{\mathrm{rk}}}
\def\tr{{\mathrm{\, tr\, }}}
\def\db{{\ol\partial}}
\def\we{\wedge}
\def\Hom{\mathrm{Hom}}
\def\End{\mathrm{End}}
\def\mspace{\mbox{\textvisiblespace}}
\begin{document}

\title[Differential geometry of moduli spaces of quiver bundles]{Differential
geometry of moduli spaces of quiver bundles}

\author[I.\ Biswas]{Indranil Biswas}

\address{School of Mathematics, Tata Institute of Fundamental Research, Homi
Bhabha Road, Mumbai 400005, India}

\email{indranil@math.tifr.res.in}

\author[G.\ Schumacher]{Georg Schumacher}

\address{Fachbereich Mathematik und Informatik,
Philipps-Universit\"at Marburg, Lahnberge, Hans-Meerwein-Straße, D-35032
Marburg, Germany}

\email{schumac@mathematik.uni-marburg.de}

\subjclass[2010]{16G20, 14D21, 14J60, 32G08}

\keywords{Quivers, moduli; Weil-Petersson K\"ahler form; curvature; determinant
bundle; Quillen metric.}

\begin{abstract}
Let $P$ be a parabolic subgroup of a semisimple affine algebraic group $G$ defined
over $\mathbb C$ and $X$ a compact K\"ahler manifold. L.
\'Alvarez-C\'onsul and O. Garc\'{i}a-Prada associated to these a quiver $Q$ and
representations of $Q$ into holomorphic vector bundles on $X$ (\cite{A-G03},
\cite{A-G03b}). Our aim here is to investigate the differential geometric
properties of the moduli spaces of representations of $Q$ into vector bundles on
$X$. In particular, we construct a Hermitian form on these moduli spaces. A
fiber integral formula is proved for this Hermitian form; this fiber integral formula
implies that the Hermitian form is K\"ahler. We compute the curvature of this
K\"ahler form. Under an assumption which says that $X$ is a complex projective
manifold, this K\"ahler form is realized as the curvature of a certain
determinant line bundle equipped with a Quillen metric.
\end{abstract}

\maketitle

\tableofcontents

\section{Introduction}

Fix a semisimple affine algebraic group $G$ defined over $\mathbb C$ and a proper
parabolic subgroup $P\, \subset\, G$.
L. \'Alvarez-C\'onsul and O. Garc\'{i}a-Prada constructed a quiver $Q$ with relations
and established an equivalence of categories between the following two:
\begin{enumerate}
\item holomorphic finite dimensional representations of $P$;

\item finite dimensional representations of $Q$
\end{enumerate}
(see \cite{A-G03}). Let $X$ a compact connected K\"ahler manifold
equipped with a K\"ahler form. In \cite{A-G03},
the authors proved the following more general result:
There is an equivalence of categories between the following two:
\begin{enumerate}
\item $G$--equivariant holomorphic vector bundles on $X\times (G/P)$;

\item representations of $Q$ into holomorphic vector bundles on $X$.
\end{enumerate}

The representations of $Q$ into holomorphic vector bundles on $X$ are called
quiver bundles on $X$. The notion of stability of vector bundles on $X$ extends
to quiver bundles on $X$. It is known that polystable quiver bundles carry a
generalization of the Hermite-Einstein metrics \cite{A-G03}, \cite{A-G03b} that are solutions of certain equations of {\em vortex type}.

Given a holomorphic family of stable quiver bundles on $X$ parameterized by a
complex analytic space $T$, we construct a Hermitian structure on $T$; see
Definition \ref{dwp1}. This construction entails building the deformations of
quiver bundles (this is carried out in Section \ref{sec3.2}) and relating the
curvature of a generalized Hermite-Einstein metric to deformations
(this is carried out in Section \ref{se3.3}).

One of the main results here is a fiber integral formula for the above mentioned
Hermitian structure on $T$; see Proposition \ref{propfb1} and Proposition
\ref{propfb2}. As a corollary we obtain that the Hermitian structure is
K\"ahler (Corollary \ref{corfb1}).

Another main result is a computation of this K\"ahler form on the moduli
space of stable quiver bundles; this is carried out in Theorem \ref{tf}.

Now assume that $X$ is a complex projective manifold and the K\"ahler form on it
is integral. We construct a holomorphic Hermitian line bundle on the moduli
space of stable quiver bundles with the property that the corresponding
Chern form coincides with the K\"ahler form on the moduli space; see Theorem
\ref{th2}. The construction of this holomorphic Hermitian line bundle is modeled
on the works Quillen, \cite{Qu-85}, and Bismut--Gillet--Soul\'e \cite{bgs}.

\section{Quiver bundles -- Notation and fundamental properties}

\subsection{Representations of quivers}
We follow the notation of \cite{A-G03}. Two sets $Q_0$ and $Q_1$ together with two maps
$h,t : Q_1 \,\longrightarrow\, Q_0$ give rise to a {\em directed graph} or {\em quiver}. The elements of $Q_0$ are called vertices, and the elements of $Q_1$ are called arrows. For $a\in Q_1$ the vertices $ha := h(a)$ and $ta := t(a)$ are called head of $a$ and tail of $a$ respectively. For an arrow $a$ there is the notation $a\,:\, v \,\to\, v'$, where $v=ta$ and $v'=ha$. At this point the set of vertices $Q_0$ is not assumed to be finite. However the quiver $Q:=(Q_0,Q_1)$ will have to be {\em locally finite} meaning\ for any vertex $v\in Q_0$ the sets
$h^{-1}(v)$ and $t^{-1}(v)$ are assumed to be finite. A (non-trivial) path in $Q$ is a sequence $p=a_0\cdots a_m$ of arrows $a_j \in Q_1$ such that $ta_{i-1}=ha_i$ for $i=1,\ldots, m$:
\begin{equation}\label{eq:path}
p:\; \stackrel{tp}{\bullet}\; \stackrel{a_m}{\longrightarrow}\; \bullet \; \stackrel{a_{m-1}}{\longrightarrow} \; \cdots \; \stackrel{a_0}{\longrightarrow}\; \stackrel{hp}{\bullet}.
\end{equation}
The vertices $tp= ta_m$ and $hp=ha_0$ are respectively called tail and head of the path $p$. By definition, the trivial path $e_v$ at $v\in Q_0$ is equal to $e_v : v \to v$ in the above alternative notation.

A formal, finite sum $$r\,=\, c_1 p_1 + \ldots + c_\ell p_\ell$$ of paths $p_j$ with complex coefficients is called a {\em (complex) relation} of a quiver. A {\em quiver with
relations} is a pair $(Q,\cK)$, where $Q$ is a quiver with $\cK$ being a set of relations.

A {\em linear representation} $\mathbf{ R =(V,\pmb \varphi)}$ of a quiver $Q$ is given by a collection $\mathbf V$ of complex vector spaces $V_v$ for all vertices $v \in Q_0$ together with a collection $\pmb \varphi$ of linear maps $\varphi_a: V_{ta} \to V_{ha}$ for all $a\in Q_1$. For all but finitely many vertices $v$ the spaces $V_v$ are required to be zero. Morphisms $\mathbf{f: R \to R}$ between representations $\mathbf{ R =(V,\pmb \varphi)}$ and $\mathbf{ R' =(V',\pmb \varphi')}$ by definition consist of linear maps $f_v: V_v \to V'_v$ for all $v\in Q_0$ such that $\varphi_a \circ f_{ta} = f_{ha} \circ \varphi_a$. Given a representation $\mathbf{ R \,=\,(V,\pmb \varphi)}$, any non-trivial path $p$ in the sense of \eqref{eq:path} induces a linear map
$$
\varphi(p) \,:=\, \varphi_{a_0}\circ \cdots \circ \varphi_{a_m} : V_{tp}\to V_{hp}.
$$
The linear map $\varphi(e_v)$ that is induced by the trivial path at a vertex $v$ is by definition the identity $id: V_v \to V_v$.

A linear representation $\mathbf{ R =(V,\pmb \varphi)}$ is said to satisfy a relation $r= c_1 p_1 + \ldots c_\ell p_\ell$, if
$$
c_1 \varphi(p_1) + \ldots + c_\ell \varphi(p_\ell)=0.
$$
For any set $\cK$ of relations of a quiver $Q$ a {\em $(Q,\cK)$-module} is a linear representation of $Q$ that satisfies all relations from $\cK$.

\subsection{$P$-modules and quiver representations}

Let $G$ be a connected semisimple affine algebraic group defined over the complex
numbers. Let $P$ be a proper parabolic subgroup of $G$. In \cite[p.~8,
Section~1.3]{A-G03}, a quiver $Q$ with relations $\mathcal K$ is constructed from $P$.
We will adopt the notation of \cite[Section~1.3]{A-G03}. The reader is referred to
\cite[Section~1.3]{A-G03} for the details.

In \cite[pp.~8--9, Theorem~1.4]{A-G03} the following equivalence of categories is proved:
$$
\left\{\vcenter{\hbox{\text{finite dimensional holomorphic}}\hbox{\text{representations of $P$}}}\right\}\longleftrightarrow
\left\{\vcenter{\hbox{\text{finite dimensional representations}}\hbox{\text{of $Q$ satisfying relations in $\mathcal K$}}}\right\}\, .
$$

\subsection{Holomorphic quiver bundles and sheaves}

We assume that $Q$ is a locally finite quiver. In a natural way the notion of linear representations of quivers extends to the notion of $Q$-sheaves on complex manifolds.
\begin{definition}\label{de:Qsheaf}
A $Q$-sheaf $\pmb{{\cR}}=\pmb{(\cE,\phi)}$ on a compact complex manifold $X$ consists of a collection $\pmb\cE$ of coherent sheaves $\{\cE_v\}_{v\in Q_0}$ together with a collection $\pmb{\phi}=\{\phi_a\}_{a\in Q_1}$ of morphisms $\phi_a: \cE_{ta} \to \cE_{ha}$ such that $\cE_v=0$ for all but finitely many $v\in Q_0$.
\end{definition}
Given a $Q$-sheaf\; $\pmb{{\cR}}=\pmb{(\cE,\phi)}$ on $X$ every path $p$ induces a morphism of sheaves: Let $p= a_0 \cdots a_m$, then
$$
\phi_p:=\phi_{a_0}\circ \cdots \circ \phi_{a_m}: \cE_{tp} \to \cE_{hp}.
$$
The trivial path $e_v$ at a vertex $v$ induces the identity $\phi_v =id : \cE_v \to \cE_v$.

A $Q$-sheaf\; $\pmb{{\cR}}=\pmb{(\cE,\phi)}$ satisfies a relation $$r\,=\,c_1 p_1 + \cdots +
c_\ell p_\ell$$ if $c_1 \phi(p_1) + \cdots + c_\ell \phi(p_\ell)=0$. It is understood
that for all paths in a relation have a common initial point and a common
end point. Given a set
of relations $\cK$, a $Q$-sheaf with relations $\cK$ or a {\em $(Q,\cK)$-sheaf} is
defined like in the absolute case. Also the notion of morphisms of $Q$-sheaves is carried
over from the absolute case. If all sheaves $\cE_v$ are locally free, a $Q$-sheaf is
called a holomorphic $Q-$bundle.

It can be seen immediately that the category of $(Q,\cK)$-sheaves is abelian.

\subsection{Correspondence between equivariant sheaves, equivariant filtrations, and quiver
sheaves}

In the absolute case, where $X$ is a point, it is known (cf.\ \cite[Corollary 1.19]{A-G03}) that a holomorphic, homogeneous vector bundle $\cF$ on $G/P$ admits a filtration of holomorphic, homogeneous subbundles: Given an irreducible representation
$M_\lambda$ of $P$ corresponding to an integral dominant weight
$\lambda \in \Lambda^+_P$, then $$\cO_\lambda\,:=\, G\times_P M_\lambda$$ is the induced irreducible holomorphic homogeneous vector bundle on $G/P$. The filtration of $\cF$ is of the form
\begin{gather}
 {\ul\cF}: 0 \hookrightarrow \cF_0 \hookrightarrow \cF_1 \hookrightarrow \ldots \hookrightarrow \cF_m=\cF\\
 \cF_j/\cF_{s-1} \simeq V_{\lambda_j} \otimes \cO_{\lambda_j} \text{ for } s=1,\ldots, m
\nonumber
\end{gather}
for some dominant integral weights $\lambda_j$ with $\lambda_0 < \lambda_1 < \ldots < \lambda_m$ with $V_\lambda$ determined by $\cF$.

Now let $M \,=\, X \times (G/P)$ as above and $\cF$ a coherent $G$-equivariant sheaf on $M$. The a filtration
\begin{equation}\label{eq:filtM}
{\ul\cF}: 0 \hookrightarrow \cF_0 \hookrightarrow \cF_1 \hookrightarrow \ldots \hookrightarrow \cF_m=\cF
\end{equation}
by coherent $G$-equivariant subsheaves is called a sheaf filtration. In an analogous way filtrations for holomorphic vector bundles in terms of subbundles are defined. These are called {\em holomorphic filtrations}.

\begin{theorem}[{Álvarez-Consul, García-Prada \cite[Theorem 2.5]{A-G03}}]
There is an equivalence of categories
$$
\left\{\vcenter{\hbox{\text{coherent $G$-equivariant}}\hbox{\text{sheaves on $X\times (G/P)$}}}\right\}\longleftrightarrow \left\{\text{$(Q,\cK)$-sheaves}\right\}.
$$
The holomorphic $G$-equivariant vector bundles on $X\times (G/P)$ and the holomorphic $(Q,\cK)$-bundles on $X$ are in correspondence by this equivalence.
\end{theorem}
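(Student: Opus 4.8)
The plan is to reduce the family statement to the pointwise equivalence between holomorphic $P$-modules and representations of $(Q,\cK)$ already recorded above, by means of equivariant descent along the projection $X\times(G/P)\to X$. The key reduction is the observation that, since $G$ acts trivially on $X$ and transitively on $G/P$ with stabiliser $P$ at the base point $eP$, restriction to the slice $X\times\{eP\}$ together with the residual action of the stabiliser $P$ gives an equivalence between the category of coherent $G$-equivariant sheaves on $X\times(G/P)$ and the category of coherent $\cO_X$-modules equipped with an algebraic $P$-action covering the trivial action on $X$; a quasi-inverse is furnished by the associated-sheaf (induction) functor $\cE\mapsto G\times_P\cE$. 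Thus it suffices to match $P$-equivariant $\cO_X$-modules with $(Q,\cK)$-sheaves.

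Next I would sheafify the construction underlying the $P$-module/quiver equivalence recalled above. Fix the Levi decomposition $P=L\ltimes P_u$ and write $\mathfrak n=\mathrm{Lie}(P_u)$. Given a $P$-equivariant $\cO_X$-module $\cE$, its $L$-isotypic decomposition is $\cO_X$-linear and splits $\cE$ into a finite direct sum of multiplicity sheaves indexed by the dominant weights $\lambda\in\Lambda_P^+$; these multiplicity sheaves are the vertex sheaves $\cE_v$. The residual action of $\mathfrak n$, decomposed into its $L$-irreducible components along a chosen generating set, yields $\cO_X$-linear morphisms between neighbouring multiplicity sheaves, and these are precisely the arrow morphisms $\phi_a\colon\cE_{ta}\to\cE_{ha}$. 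Translating through the filtration \eqref{eq:filtM}, whose associated graded pieces are of the form $p_X^*\cE_\lambda\otimes p_{G/P}^*\cO_\lambda$, shows that this is exactly the family version of the absolute filtration recalled above, with the extension data of the filtration encoded by the $\phi_a$.

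The relations are handled by naturality: because the identities defining $\cK$ follow from the $\mathfrak n$-module axioms (the Lie-bracket relations in $\mathfrak n$) and these hold fibrewise by the absolute equivalence, the $\cO_X$-linear morphisms $\phi_a$ satisfy the corresponding compositional relations globally. For the quasi-inverse direction I would start from a $(Q,\cK)$-sheaf $(\cE,\phi)$, form $\bigoplus_\lambda p_X^*\cE_\lambda\otimes p_{G/P}^*\cO_\lambda$, and reconstruct the $\mathfrak n$-action—hence the $P$-action on the corresponding $\cO_X$-module $\bigoplus_\lambda\cE_\lambda\otimes M_\lambda$—from the $\phi_a$; the relations $\cK$ guarantee that this assignment is a genuine $P$-action, and the two functors are mutually quasi-inverse since their composites are identified with the identity fibrewise. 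The bundle refinement is then automatic: the functors are built from direct sums, tensor products, and the fixed homogeneous bundles $\cO_\lambda$, so $\cF$ is locally free if and only if every vertex sheaf $\cE_v$ is, matching holomorphic $G$-equivariant bundles with holomorphic $(Q,\cK)$-bundles.

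I expect the principal obstacle to be the upgrade from the pointwise statement to a statement over $X$: one must verify that the $L$-isotypic decomposition and the multiplicity functors are exact, preserve coherence, and commute with base change, so that the extraction of the vertex sheaves $\cE_v$ and of the arrows $\phi_a$ is well defined and functorial in families rather than merely fibre by fibre. This flatness and base-change bookkeeping—rather than any single difficult estimate—is the technical heart of the equivalence; once it is in place, the verification that the two functors are quasi-inverse and respect the relations $\cK$ reduces to the already-established pointwise equivalence.
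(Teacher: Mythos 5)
This theorem is imported verbatim from \'Alvarez-C\'onsul and Garc\'{\i}a-Prada \cite[Theorem 2.5]{A-G03}; the paper under review gives no proof of its own, so the only meaningful comparison is with the original source. Your outline --- the slice/induction equivalence reducing coherent $G$-equivariant sheaves on $X\times(G/P)$ to $P$-equivariant $\cO_X$-modules with trivial action on $X$, then the $L$-isotypic decomposition producing the vertex sheaves, the nilradical action producing the arrows, and the Lie-algebra structure of the nilradical producing the relations $\cK$ --- is essentially the strategy of the proof in that reference, and the technical point you single out (exactness, coherence, and base-change compatibility of the isotypic and multiplicity functors over $X$) is indeed where the actual work lies there.
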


\subsection{Invariant holomorphic structures on G-manifolds}
We consider the compact complex manifold $M= X \times (G/P)$, and the maximal compact subgroup $K\subset G$ such that $K/J\,\longrightarrow\, (G/P)$ is a diffeomorphism, where $J=K\cap Q$. The compact \ka manifold $Y = G/P \simeq K/J$ will be equipped with a $K$-invariant \ka form $\omega_Y$.

Let $F$ be a smooth $K$-equivariant complex vector bundle on $M$. The group $G$ is
considered a complexification of $K$ which ensures that $K$-actions on $F$ lift in a
unique way to
$G$-actions implying that $F$ carries the structure of a $G$-equivariant bundle on the
$G$-manifold $M$. Like in the classical case complex structures can be described. Let
$\cD$ be the space of Dolbeault operators (also denoted by $\db$-operators
in the literature) on $F$, on which the group $G$ acts by
$$\gamma(\db_F)\,=\, \gamma\circ \db_F \circ \inv \gamma$$ (for $\gamma\in G$ and $\db_F\in
\cD$). This action leaves invariant the space $\cC$ of integrable
Dolbeault operators. The
group $G$ acts by conjugation on the complexified gauge group ${\cG}^\C= \cA^0(Aut(F))$
of $F$ -- the fixed points of the action correspond in a unique way to the holomorphic
structures on $F$ such that the action of $G$ is holomorphic.

Let $\cE$ be a holomorphic $P$-equivariant vector bundle on $X$; the underlying
complex $J$-equivariant vector bundle is denoted by $E$. We denote by $\cF$ the induced
$G$-equivariant holomorphic vector bundle $G\times_P\cE$ on $X\times (G/P)$. The
underlying complex vector bundle is $F\,=\,K\times_J E$ on the same base with respect to the
differentiable structure given by $$X \times (J/K) \,\simeq\, X\times (G/P)\, .$$

Concerning the underlying real structure, given an irreducible representation
$M_\lambda$ of $J$ corresponding to an integral dominant weight $\lambda$, the induced
irreducible smooth homogeneous vector bundle on $K/J$ is $H_\lambda\,=\,
K\times _J M_\lambda$. The Dolbeault operator corresponding to the holomorphic
structure $\cO_\lambda$ is denoted by $\db_{H_\lambda}$. The maps $p \,:\, X \times K/J
\,\to\, X$ and $q\,:\, X \times K/J \,\to\, K/J$ denote the canonical projections.

\begin{lemma}[{\cite[Lemma 3.1]{A-G03}}]\label{le:decompF}
Every smooth K-equivariant complex vector bundle $F$ on $X \times K/J$ can
be equivariantly decomposed, uniquely up to isomorphism, as
\begin{equation}\label{decf}
F \simeq \oplus_{\lambda \in Q'_0} F_\lambda\, , \quad F_\lambda:=
p^*E_\lambda \otimes q^*H_\lambda,
\end{equation}
for some finite collection $\mathbf E$ of smooth complex vector bundles $E_\lambda$ on $X$, with trivial $K$-action, where $Q'_0 \subset Q_0$ is the set of vertices with $E_ \lambda\neq 0$, which is known to be finite.
\end{lemma}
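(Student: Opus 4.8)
The plan is to reduce the decomposition to the representation theory of the compact group $J$ by passing to the fiber of $F$ over the base point $o = eJ \in K/J$. Since the stabilizer of $o$ in $K$ is exactly $J$, the restriction $E := F|_{X \times \{o\}}$ is a smooth $J$-equivariant complex vector bundle on $X$ with $J$ acting trivially on the base $X$. First I would record the standard equivalence between smooth $K$-equivariant bundles on $X \times K/J$ and smooth $J$-equivariant bundles on $X$: restriction to the fiber over $o$ is inverse to the associated-bundle (induction) functor $E \mapsto K \times_J E$, where $J$ acts on $K \times E$ by $j \cdot (k, e) = (kj^{-1}, je)$ and the quotient is fibered over $X \times K/J$ via $[k, e] \mapsto (\pi_X(e), kJ)$, with $K$ acting by left translation on the first factor. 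That restriction recovers $E$ with its $J$-action is immediate, and that induction recovers $F$ follows from transporting the fibers of $F$ over an arbitrary $kJ$ back to $o$ by the $K$-action; this is where equivariance is used in an essential way.

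Once on the $J$-equivariant side, I would apply the isotypic decomposition fiberwise. For each $x \in X$ the fiber $E_x$ is a finite-dimensional representation of the compact group $J$, so it splits canonically into isotypic components $E_x = \bigoplus_\lambda E_x^{(\lambda)}$ indexed by the irreducibles $M_\lambda$. The projection onto the $\lambda$-isotypic component is given by the averaging formula $\pi_\lambda = (\dim M_\lambda) \int_J \overline{\chi_\lambda(j)}\, \rho_x(j)\, dj$, where $\rho_x$ is the action on $E_x$, $\chi_\lambda$ is the character of $M_\lambda$, and $dj$ is normalized Haar measure. Because $\rho$ depends smoothly on $x$, the $\pi_\lambda$ form a smooth family of bundle idempotents; their images therefore assemble into smooth subbundles, of locally constant — hence, as $X$ is connected, constant — rank. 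Setting $E_\lambda := \Hom_J(M_\lambda, E)$, the evaluation map $E_\lambda \otimes M_\lambda \to E^{(\lambda)}$ is a $J$-equivariant isomorphism with $J$ acting trivially on $E_\lambda$, so that $E \simeq \bigoplus_\lambda E_\lambda \otimes M_\lambda$. Finiteness of the index set $Q'_0$ is forced by $\rk E < \infty$.

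It then remains to transport this decomposition back through the equivalence. Since $J$ acts trivially on the multiplicity bundle $E_\lambda$, the associated-bundle construction only twists the $M_\lambda$-factor, and one checks directly that $K \times_J (E_\lambda \otimes M_\lambda) \simeq p^* E_\lambda \otimes q^* H_\lambda = F_\lambda$, using $H_\lambda = K \times_J M_\lambda$ and the triviality of the $K$-action on $p^* E_\lambda$. Summing over $\lambda$ yields the claimed decomposition $F \simeq \bigoplus_{\lambda \in Q'_0} F_\lambda$. Uniqueness up to isomorphism is inherited from the canonicity of the isotypic decomposition: each multiplicity bundle is recovered intrinsically as $E_\lambda = \Hom_J(M_\lambda, F|_{X \times \{o\}})$, so the summands $F_\lambda$ and the finite index set $Q'_0$ are determined by $F$.

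I expect the main obstacle to be the gluing step — verifying that the pointwise isotypic projections depend smoothly on $x$ and cut out honest smooth subbundles rather than a merely set-theoretic fiberwise splitting. The character/averaging formula for $\pi_\lambda$ is precisely what makes this smoothness transparent, and local constancy of the rank of a continuous idempotent does the rest; everything else is formal once the equivalence of categories is in place.
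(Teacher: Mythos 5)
The paper does not prove this lemma at all: it is quoted verbatim from \'Alvarez-C\'onsul--Garc\'{\i}a-Prada \cite[Lemma 3.1]{A-G03}, so there is no internal proof to compare against. Your argument is correct and is essentially the standard one underlying the cited result: the induction/restriction equivalence between $K$-equivariant bundles on $X\times K/J$ and $J$-equivariant bundles on $X$ (with trivial action on the base), followed by the fiberwise isotypic decomposition for the compact group $J$, made global and smooth via the character-averaging idempotents $\pi_\lambda$ whose locally constant rank (constant, as $X$ is connected) cuts out the subbundles, with $E_\lambda=\Hom_J(M_\lambda, F|_{X\times\{o\}})$ giving both the multiplicity bundles and the uniqueness statement. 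No gaps.
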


As in \cite[p.~26, Notation~3.2.2]{A-G03}, we take a basis $$\{\eta_a ; a \in Q_0, ta
= \lambda, ha = \mu\}$$ of $\cA^{0,1}(Hom(H_\lambda,H_\mu))^K$. The reader is referred
to \cite[p.~26, Lemma~3.3]{A-G03} and \cite[p.~26, Notation~3.2.2]{A-G03}.

In order to define the Dolbeault operators, we refer to the following notation. We invoke the situation of Lemma~\ref{le:decompF}. Again $\cD$ stands for the space of all Dolbeault operators on $F$ and $\cC$ is the subspace of integrable such operators. Let $Q'=(Q'_0,Q'_1)$ be the full sub-quiver of $Q$, whose vertices $\lambda$ are defined by the condition $E_\lambda\neq 0$, and whose arrows $a$ by definition satisfy $E_{ta}\,\neq\, 0$ and
$E_{ha}\,\neq\, 0$.

For $\lambda \in Q'_0$ there is a complex gauge group $\cG^\C_\lambda$ of $E_\lambda$ and
$$
\cG^\C = \prod_{\lambda \in Q'_0} \cG^\C_\lambda,
$$
which acts on the space $\cD'$, and on the representation space $\cR(Q',
\mathbf E)$, being defined by
$$
\cD' = \prod_{\lambda \in Q'_0} \cD_\lambda, \qquad \cR(Q', \mathbf E) = \bigoplus_{a\in Q'_1}\Omega^0(Hom(E_{ta}, E_{ha})).
$$
As the above elements of gauge groups decompose in the above way, they act by conjugation on both the Dolbeault operators from $\cD'$ and homomorphisms from $\cR(Q', \mathbf E)$ thus amounting to an action of $\cG^\C$ on $\cD' \times \cR(Q', \mathbf E)$. The integrability condition
$$\db^2\,=\,0$$ on a Dolbeault
operator $\db$
determines an invariant subspace $\cN=\{(\db_E,\pmb \phi )\}$, where the operators $\db_E$ define a complex structure on $E$, and where the $\phi_a$ are holomorphic. Furthermore the resulting $Q$-bundle $\pmb{{\cR}}=\pmb{(\cE,\phi)}$ is supposed to satisfy the relations from $\cK$.

Like in the classical situation there is a correspondence of connections on the
``real bundles'' and Dolbeault operators on the complexified bundle.
These contain the set of holomorphic structures. We will need a precise
description.

\begin{proposition}[{\cite[Proposition 3.4]{A-G03}}]\mbox{}
\begin{itemize}
\item[(a)]
There is a one-to-one correspondence between $\cD^G$ and $\cD' \times \cR(Q', \mathbf E)$ which, to any $(\db_E,\,\pmb \phi )\,\in\, \cR(Q',\, \mathbf E)$ associates, the Dolbeault operator $\db_F\in \cD^G$
given by
\begin{equation}
\db_F = \sum_{\lambda \in Q'_0} \db_{F_\lambda}\circ \pi_\lambda + \sum_{a\in Q'_1} \beta_a\circ \pi_{ta}\, ,
\end{equation}
where $\pi_\lambda$ is the projection of $F$ to $F_\lambda$ in \eqref{decf}
and $\db_{F_\lambda} = p^* \db_{E_\lambda} \otimes id + id \otimes \db_{H_\lambda}$
for $\lambda \in Q'_0$, while $$\beta_a := p^*\phi_a \otimes q^*\eta_a \in \cA^{(0,1)}(Hom(F_{ta}, F_{ha}))$$ for $a \in Q'_1$.
\item[(b)]
The previous correspondence restricts to a one-to-one correspondence between $\cC^G$ and $\cN$.
\item[(c)]
There is a one-to-one correspondence between $(\cG^\C)^G$ and $\cG^{'\C}$ which, to any $g \in \cG^{'\C}$ associates $g= \sum_{\lambda \in Q'_0} \wt g_\lambda \circ \pi_\lambda \in (\cG^\C)^G$ and $\cG^{'\C}$, with $\wt g_\lambda = p^* g_\lambda \in \Omega^0(Aut(p^* E_\lambda))$.
\item[(d)]
These correspondences are compatible with the actions of the groups of (c) on the sets of (a) and (b), hence there is a one-to-one correspondence between $\cC^G/(\cG^\C)^G$ and $\cN/\cG^{'\C}$.
\end{itemize}
\end{proposition}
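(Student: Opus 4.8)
The plan is to parametrize all four correspondences through the $K$-equivariant splitting $F\simeq\bigoplus_{\lambda\in Q'_0}F_\lambda$ of Lemma~\ref{le:decompF} and to reduce every assertion to the determination of the $K$-invariant parts of the bundle-valued Dolbeault complex of $M=X\times K/J$. Since $\cD$ is an affine space modeled on $\cA^{0,1}(\End F)$, after fixing the reference operator $\sum_\lambda\db_{F_\lambda}\circ\pi_\lambda$ each element of $\cD^G$ is recorded by a class in $\cA^{0,1}(\End F)^K$; decomposing such an endomorphism-valued form into its blocks $Hom(F_\lambda,F_\mu)$ along the splitting reduces the problem to computing $\cA^{\bullet}(Hom(F_\lambda,F_\mu))^K$ in the relevant form degrees. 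The triviality of the $K$-action on each $E_\lambda$ and the product structure of $M$ will let me factor these invariants into an $X$-direction tensor a $K/J$-direction.

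For part (a) I would first split the Dolbeault complex of the product, $\Omega^{0,1}_M=p^*\Omega^{0,1}_X\oplus q^*\Omega^{0,1}_{K/J}$, and use $F_\lambda=p^*E_\lambda\otimes q^*H_\lambda$ to obtain, for each block,
\begin{multline*}
\cA^{0,1}(Hom(F_\lambda,F_\mu))^K\,\simeq\,\big(\Omega^{0,1}(Hom(E_\lambda,E_\mu))\otimes\Omega^0(Hom(H_\lambda,H_\mu))^K\big)\\
\oplus\,\big(\Omega^0(Hom(E_\lambda,E_\mu))\otimes\cA^{0,1}(Hom(H_\lambda,H_\mu))^K\big).
\end{multline*}
By Frobenius reciprocity and Schur's lemma, $\Omega^0(Hom(H_\lambda,H_\mu))^K\simeq Hom_J(M_\lambda,M_\mu)$ vanishes for $\lambda\neq\mu$, so only diagonal blocks contribute to the first summand: there, after subtracting $id\otimes\db_{H_\lambda}$, the surviving freedom is a free Dolbeault operator $\db_{E_\lambda}$ on $E_\lambda$, yielding the factor $\cD'=\prod_\lambda\cD_\lambda$. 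In the second summand the chosen basis $\{\eta_a\}$ of $\cA^{0,1}(Hom(H_\lambda,H_\mu))^K$ indexed by arrows $a\,:\,\lambda\to\mu$ writes every off-diagonal invariant uniquely as $\sum_a p^*\phi_a\otimes q^*\eta_a=\sum_a\beta_a$ with $\phi_a\in\Omega^0(Hom(E_{ta},E_{ha}))$; this is precisely the factor $\cR(Q',\mathbf E)$. Reassembling the blocks gives the stated formula for $\db_F$ together with its bijectivity.

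For part (b) I would expand $\db_F^2$ and sort the outcome by block and by bidegree. Using $\db_{H_\lambda}^2=0$ (integrability of the homogeneous structures $\cO_\lambda$) and the cancellation of the mixed $X$--$Y$ terms on the product, the diagonal $(0,2)$-component reduces to $p^*(\db_{E_\lambda}^2)$, whose vanishing is integrability of $\db_{E_\lambda}$. The cross terms $\db_{F_\mu}\circ\beta_a+\beta_a\circ\db_{F_\lambda}$ produce $p^*(\db_{Hom(E_\lambda,E_\mu)}\phi_a)\otimes q^*\eta_a$ (the $K/J$-contribution $q^*(\db\eta_a)$ vanishing because the $\eta_a$ are $\db$-closed invariant representatives), so their vanishing is holomorphicity of $\phi_a$. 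The purely quadratic terms $\beta_b\circ\beta_a$ over composable arrows give $p^*(\phi_b\phi_a)\otimes q^*(\eta_b\we\eta_a)$, and the linear dependencies among the $\eta_b\we\eta_a$ in $\cA^{0,2}(Hom(H_\lambda,H_\nu))^K$ are by construction of $(Q,\cK)$ exactly the relations $\cK$; hence this part vanishes iff $\pmb\phi$ satisfies $\cK$. Together these identify $\cC^G$ with $\cN$. For part (c), a $K$-invariant $g\in\Omega^0(Aut F)^K$ has blocks in $\Omega^0(Hom(E_\lambda,E_\mu))\otimes Hom_J(M_\lambda,M_\mu)$, which vanish off the diagonal by Schur; thus $g=\sum_\lambda\wt g_\lambda\circ\pi_\lambda$ with $\wt g_\lambda=p^*g_\lambda$, giving $(\cG^\C)^G\simeq\cG^{'\C}$.

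Finally, for part (d) I would check directly that conjugation $\db_F\mapsto g\circ\db_F\circ\inv g$, with $g$ block-diagonal by (c), acts block-wise and corresponds under (a)--(b) to $\db_{E_\lambda}\mapsto g_\lambda\db_{E_\lambda}\inv{g_\lambda}$ and $\phi_a\mapsto g_{ha}\phi_a\inv{g_{ta}}$; since the bijection intertwines the two actions, passing to quotients yields $\cC^G/(\cG^\C)^G\simeq\cN/\cG^{'\C}$. The main obstacle is the representation-theoretic bookkeeping behind the block computations: establishing, in each bidegree, that the $\Omega^0$-invariants are diagonal, that the invariant $(0,1)$-forms are spanned by the arrow-classes $\eta_a$, and that the invariant $(0,2)$-forms encode exactly $\cK$. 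This is the Bott--Borel--Weil type harmonic analysis on $G/P$ underlying the very construction of $(Q,\cK)$ from $P$, and it is what forces the quiver data to appear.
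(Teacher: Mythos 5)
The paper does not actually prove this statement: it is imported verbatim as \cite[Proposition~3.4]{A-G03}, so there is no in-paper argument to compare against. Your sketch is, in substance, a faithful reconstruction of the proof in that reference: block-decompose along the $K$-equivariant splitting $F\simeq\oplus_\lambda F_\lambda$ of Lemma~\ref{le:decompF}, compute the $K$-invariants of $\cA^{0,q}(Hom(F_\lambda,F_\mu))$ degree by degree using the product structure of $X\times K/J$ and the triviality of the $K$-action on the $E_\lambda$, identify the diagonal $(0,0)$-blocks with $\cD'$ and the $(0,1)$-blocks in the $K/J$-direction with $\cR(Q',\mathbf E)$ via the basis $\{\eta_a\}$ of \cite[Notation~3.2.2]{A-G03}, then read off integrability and gauge equivalence blockwise. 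One point deserves correction: your parenthetical claim that the $\eta_a$ are $\db$-closed is not justified and is false in general. Under the identification of $K$-invariant forms on $K/J$ with $J$-invariants of $\Lambda^\bullet\ol{\mathfrak n}^*\otimes Hom(M_\lambda,M_\mu)$, the operator $\db$ becomes the Chevalley--Eilenberg differential, which need not vanish on invariants; consequently the $(0,2)$-component of $\db_F^2$ in the $K/J$-direction contains both the quadratic terms $p^*(\phi_b\phi_a)\otimes q^*(\eta_b\we\eta_a)$ and the linear terms $p^*\phi_a\otimes q^*(\db\eta_a)$, and the relations $\cK$ are inhomogeneous combinations of paths of lengths one and two rather than purely quadratic dependencies among the $\eta_b\we\eta_a$. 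This does not change the architecture of your argument --- $\cK$ is constructed precisely so that the vanishing of this component is equivalent to $r(\pmb\phi)=0$ for all $r\in\cK$ --- but as written your identification of $\cC^G$ with $\cN$ in part (b) omits these linear contributions.
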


\subsection{Stability, generalized quiver vortex equations, and
Donaldson-Uhlenbeck-Yau correspondence}

Again $G$ denotes a complex connected semisimple affine algebraic group with
$K\subset G$ being a maximal compact subgroup. We assume that $G$ acts on a
(compact) \ka manifold $(M,\omega)$ such that $K$ fixes the \ka form $\omega$.

Let $\cF$ be a locally free $G$-equivariant sheaf on $M$ and a filtration of the form \eqref{eq:filtM} be given, and let $h$ be a hermitian metric on $\cF$. Denote by $F_h$ the curvature form of $h$ and by $\Lambda$ the adjoint of the homomorphism given by wedge product with $\omega$. Then the generalized \he condition (or deformed \he condition) reads
\begin{equation}\label{eq:heMatrix}
\ii \Lambda F_h = \left(
\begin{array}{cccc}
\tau_0 I_0 & & & \\
& \tau_1 I_1 & & \\
& & \ddots & \\
& & & \tau_m I_m
\end{array}
\right)
\end{equation}
where the blocks of the matrix correspond to the filtration \eqref{eq:filtM}, and where the real numbers $\tau_j$ are subject to one condition that is determined by the filtration and the \ka form. More precisely, taking traces in \eqref{eq:heMatrix} and integrating over $M$ yields
\begin{equation}\label{eq:rkdeg}
\sum^{m}_{j=0} \tau_j \rk(\cF_j/\cF_{j-1})= \deg(\cF).
\end{equation}
The condition \eqref{eq:heMatrix} is called $\tau$-\he condition with $\tau=(\tau_0,\ldots,\tau_m)$. The group $K$ acts on the space of hermitian metrics as follows: If $\gamma\in K$ and $\wt h$ is a hermitian metric, then $\gamma \cdot \wt h = (\inv \gamma)^*\wt h$. We will be interested in $K$-invariant \he metrics for a holomorphic filtration.

There exists the corresponding notion of (semi)stability for equivariant holomorphic filtrations.
\begin{definition}
Let $\sigma=(\sigma_0,\ldots,\sigma_{m-1})\in \R^{m}$, and let ${\ul\cF}$ be a $G$-equivariant
holomorphic filtration. The $\sigma$-degree and $\sigma$-slope are defined by
$$
\deg_\sigma (\cF) = \deg(\cF) + \sum^{m-1}_{j=0} \sigma_j \rk(\cF_j), \quad \mu_\sigma(\cF) = \frac{\deg_\sigma(\cF)}{\rk(\cF)}.
$$
\end{definition}
The $G$-equivariant filtration $\cF$ is called $\sigma$-stable (respectively,
$\sigma$-semistable)
if for all $G$-invariant proper sheaf subfiltrations $\cF' \,\hookrightarrow\, \cF$ the
$\sigma$-slopes satisfy $\mu_\sigma(\cF') \,<\, \mu_\sigma(\cF)$
(respectively, $\mu_\sigma(\cF') \,\leq\, \mu_\sigma(\cF)$). Note that a subfiltration by coherent sheaves is given by a coherent $G$-equivariant subsheaf $\cF'$, which generates a filtration by cutting down the given one. A $\sigma$-polystable filtration is by definition the direct sum of ($G$-equivariant) $\sigma$-stable filtrations, which have the same slope $\sigma$.

The numbers $\sigma_j$ and $\tau_j$ will be related by
\begin{equation}\label{eq:st}
\sigma_j = \tau_{j+1} - \tau_j, \;\text{ and }\; \sigma_j>0\; \text{ for } j=0,\ldots, m-1.
\end{equation}

\begin{theorem}[{\cite[Theorem 4.7]{A-G03}}]
Let $\cF$ be a $G$-equivariant holomorphic filtration on a compact \ka $K$-manifold $M$. Let $\tau$ and $\sigma$ satisfy \eqref{eq:rkdeg}, and \eqref{eq:st}. Then $\cF$
admits a $K$-invariant $\tau$-Hermite-Einstein metric, if and only if it is
$\sigma$-polystable (in the $G$-equivariant sense).
\end{theorem}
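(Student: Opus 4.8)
This statement is a Donaldson--Uhlenbeck--Yau (or \kh) type correspondence for $G$-equivariant holomorphic filtrations, so the plan is to establish the two implications separately, exploiting the now-classical dichotomy between a Chern--Weil argument (for the implication that existence of the metric forces stability) and an analytic existence argument (for the converse).

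For the direction ``$\tau$-\he metric $\Rightarrow$ $\sigma$-polystable'' I would argue by a weighted subbundle computation. Suppose $h$ is a $K$-invariant metric solving \eqref{eq:heMatrix}, and let $\cF'\hookrightarrow\cF$ be a $G$-invariant coherent subsheaf cutting out a proper subfiltration $\ul{\cF'}$. Off the singular locus of $\cF'$ (which has complex codimension at least two and hence does not affect the degree integrals) let $\pi'$ be the $h$-orthogonal projection onto $\cF'$. The standard Chern--Weil identity
$$
\deg(\cF') \,=\, \int_M \tr\bigl(\pi'\cdot \ii\Lambda F_h\bigr)\,\frac{\omega^n}{n!} \,-\, \|\db\pi'\|_{L^2}^2
$$
together with the block form of $\ii\Lambda F_h$ prescribed by \eqref{eq:heMatrix} expresses $\deg(\cF')$ in terms of the ranks $\rk(\cF'_j)$ and the eigenvalues $\tau_j$. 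The relation \eqref{eq:st}, namely $\sigma_j=\tau_{j+1}-\tau_j$, is exactly what is needed to reorganize $\sum_j\tau_j\rk(\cF'_j/\cF'_{j-1})$ into the $\sigma$-degree $\deg_\sigma(\cF')$, while \eqref{eq:rkdeg} fixes the normalizing slope $\mu_\sigma(\cF)$. The outcome is an inequality of the shape $\mu_\sigma(\cF')\le\mu_\sigma(\cF)-c\,\|\db\pi'\|_{L^2}^2$ with $c>0$, giving $\sigma$-stability; in the equality case $\db\pi'=0$ exhibits a $G$-invariant holomorphic orthogonal splitting, whence $\sigma$-polystability.

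For the converse ``$\sigma$-polystable $\Rightarrow$ $\tau$-\he metric'', which is the analytic heart, I would run the $K$-equivariant version of the continuity method of Uhlenbeck--Yau. Using a fixed $K$-invariant metric to split $\cF$ smoothly into its graded pieces, the system \eqref{eq:heMatrix} becomes a coupled vortex-type equation; I would introduce the associated Donaldson-type functional on the space of $K$-invariant Hermitian metrics, whose critical points are exactly the $\tau$-\he metrics, and solve the perturbed equation $\ii\Lambda F_{h_\varepsilon}=\mathrm{diag}(\tau_j I_j)+\varepsilon\log(h_0^{-1}h_\varepsilon)$ for $\varepsilon>0$, where it is uniquely solvable, letting $\varepsilon\to 0$. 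Throughout, $K$-invariance is preserved because the entire scheme is $K$-equivariant (alternatively one averages a solution over $K$, using geodesic convexity of the functional on the space of metrics).

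The main obstacle, as in the classical case, is the uniform $C^0$-estimate for $\log(h_0^{-1}h_\varepsilon)$ as $\varepsilon\to 0$: once it holds, higher-order elliptic estimates and weak compactness yield a genuine $\tau$-\he metric. If instead the $C^0$-norms blow up, the Uhlenbeck--Yau regularity theory extracts from a rescaled weak limit a nontrivial $L^2_1$ weakly holomorphic projection, which is identified with a coherent subsheaf $\cF'\hookrightarrow\cF$; the limiting form of the equation then forces $\mu_\sigma(\cF')\ge\mu_\sigma(\cF)$, contradicting $\sigma$-stability once one checks that $\cF'$ is $G$-invariant. This last point follows by averaging the weak limit over $K$ and complexifying, or from uniqueness of the destabilizing subsheaf, and in the polystable situation it is precisely what produces the required splitting. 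The equivariant bookkeeping and this weak-subsheaf regularity step are where the technical difficulty concentrates.
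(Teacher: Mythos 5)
You should first note that the paper does not prove this statement at all: it is quoted verbatim as \cite[Theorem~4.7]{A-G03} and used as an imported black box, so there is no internal proof to compare your argument against. Your outline is nevertheless the correct and standard strategy --- the same Hitchin--Kobayashi scheme (Chern--Weil inequality for one direction, Uhlenbeck--Yau continuity method plus weakly holomorphic subsheaf regularity for the other) that underlies the proof in the cited source. So the approach is right; the issue is that what you have written is a roadmap rather than a proof, and a couple of the steps you present as routine are not.

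Concretely: (1) The ``reorganization'' of $\int_M\tr\bigl(\pi'\cdot \ii\Lambda F_h\bigr)$ into $\deg_\sigma(\cF')$ is not pure bookkeeping, because the orthogonal projection $\pi'$ need not commute with the $h$-orthogonal block decomposition coming from the filtration. After Abel summation one is left with terms $\tr(P_l\pi' P_l)$, where $P_l$ projects onto $\cF_l$, and one only has the pointwise \emph{inequality} $\tr(P_l\pi' P_l)\ge\rk(\cF'_l)$; it is precisely the positivity $\sigma_l>0$ from \eqref{eq:st} that makes this inequality point the right way. Your sketch treats this as an identity. (2) The analytic core --- the uniform $C^0$ estimate, the construction and coherence of the weak limit as a subsheaf, and the verification that the destabilizer is $G$-invariant and induces a subfiltration of the correct $\sigma$-slope --- is named but not carried out; this is where essentially all of the work in \cite{A-G03}, \cite{A-G03b} lives, and ``averaging over $K$ and complexifying'' is too quick for the invariance claim. (3) The statement is an equivalence with \emph{polystable}, not stable: your continuity method as described contradicts stability, so you still need to reduce the polystable case to the stable one by splitting $\cF$ into stable summands of equal $\sigma$-slope and taking a direct sum of $\tau$-\he metrics (checking that the $\tau$-parameters match on each summand), and conversely to show that a non-stable $\cF$ carrying a $\tau$-\he metric splits holomorphically via $\db\pi'=0$. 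None of this invalidates your plan, but as written it would not constitute a proof.
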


The \kh correspondence for equivariant filtrations, together with the \he equation
\eqref{eq:heMatrix} transfers to the holomorphic quiver bundle as follows. Note
that $M= X \times G/P$ carries the \ka form induced by the \ka form $\omega_X$ on
$X$ and the $K$-invariant \ka form $\omega_Y$ on $G/P$.

\begin{theorem}[{\cite[Theorem 4.13]{A-G03}}]\label{th:A-G}
Let $\cF$ be a $G$-equivariant holomorphic vector bundle on $X\times G/P$ with associated filtration $\ul \cF $, and let $\pmb{{\cR}}\,=\,\pmb{(\cE,\phi)}$ be its corresponding holomorphic $(Q,\cK)$-bundle on $X$, where $(Q,\,\cK)$ is the quiver with relations associated to $P$. Then $\ul\cF$ has a $K$-invariant $\tau$-\he metric, with respect to the \ka form $\omega=p^*\omega_X + q^*\omega_Y$, if and only if the vector bundles $\cE_\lambda$ in $\pmb\cR$ admit hermitian metrics $k_\lambda$ on $\cE_\lambda$ for each $\lambda\in Q_0$ with $\cE_\lambda\neq0$, satisfying
\begin{equation}\label{eq:HE}
\ii n_\lambda \Lambda F_{k_\lambda} + \sum_{a\in \inv h(\lambda)} \phi_a \circ \phi^*_a - \sum_{a\in \inv t(\lambda)} \phi^*_a \circ \phi_a = \tau'_\lambda \mathrm{id}_{\cE_\lambda},
\end{equation}
where $F_{k_\lambda}$ is the curvature form of the metric $k_\lambda$ on the
holomorphic vector bundle $\cE_\lambda$, for each $\lambda \,\in\, Q_0$ with $\cE_\lambda
\,\neq\,0$, and $n_\lambda\, =\, \dim N_\lambda$ is the multiplicity of the irreducible representation $M_\lambda$, for $\lambda\,\in\, Q_0$.
\end{theorem}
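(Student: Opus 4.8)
The plan is to translate the matrix equation \eqref{eq:heMatrix} on $\cF$ into the coupled system \eqref{eq:HE} on $X$ by combining the smooth decomposition of Lemma~\ref{le:decompF} with the explicit form of the invariant Dolbeault operator recalled in part~(a) of \cite[Proposition~3.4]{A-G03}. First I would identify the relevant metrics: writing $F\simeq\bigoplus_{\lambda\in Q'_0}F_\lambda$ with $F_\lambda=p^*E_\lambda\otimes q^*H_\lambda$ as in \eqref{decf}, the summands are the $J$-isotypic pieces, and for $\lambda\neq\mu$ the $J$-modules $M_\lambda,M_\mu$ are non-isomorphic and irreducible. Hence Schur's lemma forces any $K$-invariant Hermitian metric on $\cF$ to be block-diagonal and, on each block, to factor as $k_\lambda\otimes h^0_\lambda$, where $h^0_\lambda$ is the essentially unique $K$-invariant metric on $H_\lambda$. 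This sets up a bijection between $K$-invariant metrics on $\cF$ compatible with the filtration $\ul\cF$ and tuples $(k_\lambda)$ of Hermitian metrics on the $E_\lambda$, under which the graded pieces of $\ul\cF$ match the summands $F_\lambda$.

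Next I would compute the mean curvature $\ii\Lambda F_h$ of such a metric from $\db_F=\sum_\lambda\db_{F_\lambda}\circ\pi_\lambda+\sum_a\beta_a\circ\pi_{ta}$, where $\db_{F_\lambda}=p^*\db_{E_\lambda}\otimes\mathrm{id}+\mathrm{id}\otimes\db_{H_\lambda}$ and $\beta_a=p^*\phi_a\otimes q^*\eta_a$. The Chern curvature breaks into a diagonal part, contributing $p^*F_{k_\lambda}\otimes\mathrm{id}+\mathrm{id}\otimes F_{H_\lambda}$ on $F_\lambda$, and a second-fundamental-form part assembled from the $\beta_a$. Contracting against $\omega=p^*\omega_X+q^*\omega_Y$, the term $p^*F_{k_\lambda}$ meets only $\omega_X$ and yields $\Lambda_{\omega_X}F_{k_\lambda}$, whereas $F_{H_\lambda}$ meets only $\omega_Y$; the coupling part produces on the block $F_\lambda$ the head and tail sums $\sum_{a\in\inv h(\lambda)}\phi_a\circ\phi_a^*$ and $-\sum_{a\in\inv t(\lambda)}\phi_a^*\circ\phi_a$, the two signs recording whether $\lambda$ is the head or the tail of $a$. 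The $H_\lambda$-factor of each contracted coupling term, namely $\ii\Lambda_{\omega_Y}(\eta_a\we\eta_a^*)$, is a $K$-invariant and hence scalar endomorphism of $H_\lambda$, and the normalization of $\{\eta_a\}$ from \cite[Lemma~3.3]{A-G03} and \cite[Notation~3.2.2]{A-G03} is what pins down its value.

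I would then treat the two kinds of blocks separately. Since $\ii\Lambda_\omega F_h$ is itself $K$-invariant and $K$ acts trivially on the bundles $E_\lambda$, its $(\lambda,\mu)$-block for $\lambda\neq\mu$ is a $K$-invariant section of $p^*\Hom(E_\lambda,E_\mu)\otimes q^*\Hom(H_\lambda,H_\mu)$ and therefore reduces to $\Hom(M_\lambda,M_\mu)^J=0$ by Schur; thus the off-diagonal part of \eqref{eq:heMatrix} holds automatically. On the diagonal block I would take the fiberwise trace over $H_\lambda$: the term $\Lambda_{\omega_X}F_{k_\lambda}\otimes\mathrm{id}_{H_\lambda}$ acquires the multiplicity $n_\lambda=\tr_{H_\lambda}\mathrm{id}_{H_\lambda}=\dim M_\lambda$, the coupling terms survive with the coefficients fixed above, and the two purely homogeneous quantities $\ii\Lambda_{\omega_Y}F_{H_\lambda}$ and $\tau_\lambda$ are constants that merge into the single constant $\tau'_\lambda$. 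This is exactly \eqref{eq:HE}. Every step is an equality readable in both directions---the invariant metric determines and is determined by $(k_\lambda)$, and once the off-diagonal blocks are known to vanish the fiber trace discards no information---so the two conditions are equivalent.

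The step I expect to be the genuine obstacle is the curvature computation of the second paragraph: organizing the second-fundamental-form contribution of the $\beta_a$ so that, after contraction with $\omega_Y$, it collapses precisely to the head/tail sums $\sum\phi_a\phi_a^*-\sum\phi_a^*\phi_a$ with the correct signs and unit coefficient. This is where the explicit normalization of the $\eta_a$ and the \ka geometry of $G/P$ are used in an essential way; by comparison, the vanishing of the off-diagonal blocks and the emergence of $n_\lambda$ through the fiber trace are formal consequences of Schur's lemma.
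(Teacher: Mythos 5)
This theorem is not proved in the paper at all: it is imported verbatim from \cite[Theorem~4.13]{A-G03} and used as a black box, so there is no in-paper proof to compare your attempt against. That said, your sketch is essentially the dimensional-reduction argument of the cited source: Schur's lemma forces a $K$-invariant metric on $\cF$ to split as $\bigoplus_\lambda k_\lambda\otimes h^0_\lambda$ along the isotypic decomposition of Lemma~\ref{le:decompF}, the curvature of the invariant Dolbeault operator of \cite[Proposition~3.4]{A-G03} is computed blockwise (diagonal Chern terms plus the $\beta_a\wedge\beta_a^*$ coupling terms), the off-diagonal blocks of $\ii\Lambda F_h$ vanish again by Schur, and the fiberwise trace over $H_\lambda$ produces the coefficient $n_\lambda$ and collapses \eqref{eq:heMatrix} to \eqref{eq:HE}. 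You also correctly identify the only genuinely delicate point, namely that the normalization of the basis $\{\eta_a\}$ from \cite[Notation~3.2.2]{A-G03} is what makes the head/tail sums appear with unit coefficient and lets the constants $\ii\Lambda_{\omega_Y}F_{H_\lambda}$ and $\tau_\lambda$ merge into $\tau'_\lambda$ (the paper only records that $\tau'_\lambda$ and $\tau_\lambda$ are related as in \cite[4.2.2]{A-G03}). One small notational caution: you write $n_\lambda=\tr_{H_\lambda}\mathrm{id}=\dim M_\lambda$, which is indeed the mechanism by which $n_\lambda$ arises, whereas the statement describes $n_\lambda$ as $\dim N_\lambda$, the multiplicity of $M_\lambda$; reconcile this with the conventions of \cite{A-G03} before relying on the exact constant.
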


The above real numbers $\tau'_\lambda$ are related to the numbers $\tau_\lambda$ according to \cite[4.2.2]{A-G03}.

\section{Deformation theory}\label{se:def}

Let $M$ be a compact Kähler manifold, and $\cF$ a locally free, coherent sheaf on $M$ together with a filtration $\ul\cF$ in the sense of \eqref{eq:filtM}. A holomorphic family of such objects parameterized by a complex space $S$ is a collection of filtrations
$\{{\ul\cF}_s\}_{s\in S}$, where each filtration $\ul\cF_s$ is the restriction of a
filtration $\ul\cF$ over the space $M\times S$ to $M\times \{s\}\,\simeq\, M$. For $M
\,=\,X \times G/P$ we require that the group $G$ acts fiberwise in the above sense. A {\em deformation} of $\ul\cF$ is defined over a complex space $S$ with a distinguished base point $s_0\in S$. It consists of a holomorphic family over $S$ together with an isomorphism of the given object and the fiber of $s_0$. Since deformation theory only classifies holomorphic families in a local way with respect to the base, the base spaces may have to be replaced by neighborhoods of the distinguished base point.

It is known that {\em stable} quiver sheaves are {\em simple}, i.e.,\ all endomorphisms are constant multiples of the identity (cf.\ \cite{A-G03b}).

In principle, the deformation theory of such objects is well understood. However,
the quiver representation is the method of choice for a differential geometric
study of the moduli spaces. It should be mentioned that Section \ref{ss:infdef}
and Section \ref{sec3.2} remain valid for general quiver bundles associated to
locally finite quivers (not necessarily associated to $G/P$). However from Section \ref{se3.3}
the discussion is restricted to quiver bundles equipped with special metrics.

We will consider holomorphic families of quiver bundles with relations associated to $P$
and $G$-equivariant holomorphic vector bundles on $X \times G/P$ with associated
filtration $\ul\cF$. Without loss of generality, when using methods of deformation
theory, the base spaces will by assumption be Stein and contractible, in order to avoid
unnecessary complications. Furthermore, for applications to moduli spaces, the base
spaces will also be reduced.

A crucial role will be played by Hermitian metrics that satisfy the equations of vortex type
according to Theorem~\ref{th:A-G}. We will call these {\em ``quiver vortex equations''}.

\subsection{Deformations of quiver bundles}\label{ss:infdef}

The tangent cohomology for the deformation problem for holomorphic quiver bundles is a
certain hypercohomology.

Let $\pmb{{\cR}}=\pmb{(\cE,\phi)}$ be holomorphic quiver bundle on $X$ satisfying
equations from a set $\cK$ so that it carries the structure of a $(Q,\cK)$-bundle. We use
the following notation. Again, holomorphic families are well defined. Note that the
combinatorial data, in particular the set of equations $\cK$ originate from linear
representations of quivers, which depend on $P\subset G$ and are constant in a
holomorphic family.

We will use the following {\em notation}. Again we will deal with quiver bundles satisfying a set of equations $\cK$. We denote by
$\pmb{{\wt\cR}}=\pmb{(\wt\cE,\wt\phi)}$ a holomorphic quiver bundle over $X\times S$,
where $S$ denotes a parameter space, the fibers of a a point $s\in S$ i.e.\ the
restrictions to $X \times \{s\} \simeq X$ will be denoted by
$\pmb{{\wt\cR}_s}=\pmb{(\wt\cE_s,\wt\phi_s)}$. Furthermore an isomorphism
$\pmb{{\cR}}\stackrel{\sim}{\to}\pmb{{\wt\cR}_{s_0}}$ is being fixed.

\subsection{Tangent cohomology for holomorphic quiver bundles}\label{sec3.2}

Let
\begin{gather*}
{\Delta} \,:\, \oplus _{\lambda\in Q'_0} \End(\cE_\lambda) \,\longrightarrow\,
\oplus _{a\in Q'_1} \Hom(\cE_{ta},\cE_{ha})\\ \Delta(\xi)\,:=\,
[\xi,\phi], \text{ with } [\xi,\phi]_a \,:=\,
 \frac{1}{\sqrt{n_{ha}}} \xi_{ha}\circ \phi_a -\frac{1}{\sqrt{n_{ta}}}\phi_a \circ \xi_{ta}.
\end{gather*}
Let $\psi = \Delta(\xi)$. By assumption $r(\phi)=0$ for all equations $r$ from $\cK$. It follows immediately that $r(\psi)=0$ so that $\Delta$ is compatible with $\cK$.

Let
\begin{eqnarray*}
A^1&=&\big\{\psi =(\psi_a)_{a\in Q'_1};\psi_a \in \Hom(\cE_{ta},\cE_{ha}), r(\psi)=0 \text{ for all } r\in \cK \big\},\\
B^0&=& \big\{\xi=(\xi_\lambda)_{\lambda\in Q'_0}; \xi_\lambda\in \End(\cE_\lambda), r(\Delta(\xi_\lambda))=0 \text{ for all } r\in \cK \big\}.
\end{eqnarray*}
Now we define three complexes
\begin{eqnarray*}
A^\bullet &:& 0 \longrightarrow 0 \longrightarrow A^1 \longrightarrow 0\\
B^\bullet &:& 0 \longrightarrow B^0 \longrightarrow 0 \longrightarrow 0\\
C^\bullet &:& 0 \longrightarrow B^0 \stackrel{\Delta}{\longrightarrow} A^1 \longrightarrow 0.
\end{eqnarray*}
We compute hypercohomologies from $0 \to A^\bullet \to C^\bullet \to B^\bullet \to 0$ and use $\mathbb H^q(A^\bullet) = H^{q-1}(A^1)$ and $\mathbb H^q(B^\bullet)= H^q(B^0)$. Since $\Delta$ descends to the cohomology, there is an exact sequence
\begin{equation}\label{eq:longhyper}
0 \to \mathbb H^0(C^\bullet) \to H^0(B^0) \stackrel{\Delta}{\longrightarrow} H^0(A^1) \to \mathbb H^1(C^\bullet) \to H^1(B^0) \stackrel{\Delta}{\longrightarrow} H^1(A^1) \to \mathbb H^2(C^\bullet)\to \ldots
\end{equation}
\begin{proposition}
The hypercohomology of $C^\bullet$ is the tangent cohomology for deformations of holomorphic quiver bundles satisfying the given set of equations $\cK$.
\begin{itemize}
\item[(i)]
The group $\mathbb H^0(C^\bullet)$ can be identified with the space of infinitesimal automorphisms of $\pmb{{\cR}}=\pmb{(\cE,\phi)}$.
\item[(ii)]
The group $\mathbb H^1(C^\bullet)$ can be identified with the space of infinitesimal deformations of \break $\pmb{{\cR}}=\pmb{(\cE,\phi)}$.
\end{itemize}
\end{proposition}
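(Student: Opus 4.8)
The plan is to realize $\mathbb H^\bullet(C^\bullet)$ through a fine (Dolbeault) resolution of the two-term complex $C^\bullet$ and then to match the resulting cocycles and coboundaries, term by term, with the infinitesimal deformation data of $\pmb{(\cE,\phi)}$ over the dual numbers $\C[\epsilon]/(\epsilon^2)$. Concretely, I would resolve $B^0=\bigoplus_\lambda\End(\cE_\lambda)$ and the subsheaf $A^1\subset\bigoplus_a\Hom(\cE_{ta},\cE_{ha})$ by their Dolbeault complexes $\cA^{0,\bullet}(B^0)$ and $\cA^{0,\bullet}(A^1)$, and form the total complex with differential $\db\pm\Delta$. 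Since $\Delta$ is a sheaf morphism that respects the linearized relations from $\cK$ (as already observed in the text) and commutes with $\db$, this total complex computes $\mathbb H^\bullet(C^\bullet)$; its cohomology in degrees $0$ and $1$ is what I then interpret geometrically.

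For part (i), the exact sequence \eqref{eq:longhyper} gives $\mathbb H^0(C^\bullet)=\ker\big(\Delta\colon H^0(B^0)\to H^0(A^1)\big)$. A class here is a collection $\xi=(\xi_\lambda)$ of holomorphic endomorphisms of the $\cE_\lambda$, and $\Delta(\xi)=0$ says exactly that $\xi_{ha}\circ\phi_a=\phi_a\circ\xi_{ta}$ (up to the fixed normalizing constants) for every arrow $a$, i.e. that $\xi$ commutes with all quiver maps. This is precisely the condition that $\mathrm{id}+\epsilon\,\xi$ be an automorphism of $\pmb{(\cE,\phi)}$ over $\C[\epsilon]/(\epsilon^2)$, so $\mathbb H^0(C^\bullet)$ is the space of infinitesimal automorphisms. (For a stable, hence simple, quiver bundle this recovers that the space is one-dimensional.)

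For part (ii), I would represent a class in $\mathbb H^1(C^\bullet)$ by a pair $(\alpha,\eta)$ with $\alpha\in\cA^{0,1}(B^0)$ and $\eta\in\cA^0(A^1)$ satisfying the total-cocycle conditions $\db\alpha=0$ and $\db\eta=\Delta\alpha$, modulo the total coboundaries $(\db\beta,\Delta\beta)$ with $\beta\in\cA^0(B^0)$. The dictionary is: $\alpha$ deforms the Dolbeault operators, $\db_{\cE_\lambda}\mapsto\db_{\cE_\lambda}+\epsilon\,\alpha_\lambda$, with $\db\alpha=0$ being first-order integrability; $\eta$ deforms the morphisms, $\phi_a\mapsto\phi_a+\epsilon\,\eta_a$, and the coupling $\db\eta=\Delta\alpha$ expresses exactly that the deformed $\phi_a$ remain holomorphic with respect to the deformed operators, while $\eta\in A^1$ records that the linearized relations from $\cK$ persist. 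The coboundaries $(\db\beta,\Delta\beta)$ are the deformations induced by the infinitesimal gauge transformation $\mathrm{id}+\epsilon\,\beta$, i.e. the trivial ones. This identifies $\mathbb H^1(C^\bullet)$ with the infinitesimal deformations of $\pmb{(\cE,\phi)}$ as a $(Q,\cK)$-bundle; equivalently, \eqref{eq:longhyper} exhibits $\mathbb H^1(C^\bullet)$ as an extension of $\ker\big(\Delta\colon H^1(B^0)\to H^1(A^1)\big)$ — deformations of the underlying bundles $\cE_\lambda$ that survive as quiver deformations — by the cokernel of $\Delta\colon H^0(B^0)\to H^0(A^1)$ — deformations of the $\phi_a$ modulo infinitesimal gauge.

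The step I expect to be the main obstacle is the bookkeeping around the relations $\cK$: one must verify that the linearization at $\phi$ of each relation $r$ is captured precisely by the defining conditions of $A^1$ and $B^0$, that $\Delta$ genuinely lands in the subsheaf $A^1$ (the Leibniz-type computation already indicated), and that these subsheaves are coherent, ideally locally free, so that the Dolbeault resolution and the associated hypercohomology spectral sequence behave as expected. I would also need to check that the identifications in (i) and (ii) are independent of the chosen resolution and local trivializations, and that the equivalence relation on deformations matches the total coboundaries exactly — and it is here that the compatibility of the $\db$-part and the $\Delta$-part of the total differential, together with the correct signs, must be tracked carefully.
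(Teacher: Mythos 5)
Your argument is correct, but for part (ii) it takes a genuinely different route from the paper. The paper computes $\mathbb H^1(C^\bullet)$ algebraically: it uses the short exact sequence extracted from \eqref{eq:longhyper} together with a \v{C}ech-style argument over the double point $(0,\cO_D)$, $\cO_D=\C\oplus\epsilon\C$. There, classes in $H^1(B^0)$ are cocycles $(\xi_{ij,\lambda})$ gluing local trivial extensions of the $\cE_\lambda$; membership in $\ker\big(\Delta\colon H^1(B^0)\to H^1(A^1)\big)$ means that after adjusting the local trivializations by a $0$-cochain one can arrange $\Delta(\xi)=0$, which is exactly compatibility of the gluing with $\pmb\phi$, so the extensions carry induced morphisms $\wt\phi_a$; and the subquotient $H^0(A^1)/\Delta(H^0(B^0))$ is identified with deformations of the $\phi_a$ fixing all $\cE_\lambda$, modulo those induced by isomorphisms of the individual extensions. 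You instead realize $\mathbb H^1(C^\bullet)$ via the Dolbeault total complex and read the deformation data off a cocycle $(\alpha,\eta)$: $\alpha$ perturbs the $\db$-operators, $\eta$ perturbs the $\phi_a$, the coupling $\db\eta=\Delta\alpha$ is first-order holomorphicity of the deformed morphisms, and coboundaries are infinitesimal gauge transformations. This is precisely the resolution the paper introduces only afterwards (Section \ref{se3.3}, the lemmas on $\cC^{\bullet\bullet}$ and the elliptic complex $\wt C^\bullet$), so your route connects directly to the harmonic representatives $\mu_i$ used later, while the paper's double-point route makes the identification with the deformation functor over $\C[\epsilon]/(\epsilon^2)$ more immediate. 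Part (i) is handled essentially identically in both (kernel of $\Delta$ on $H^0(B^0)$, with the $1/\sqrt{n_\lambda}$ renormalization). The one point you rightly flag --- that the subsheaves cut out by the linearized relations must be coherent, and well-behaved enough for the fine resolution to compute the hypercohomology --- is treated at the same level of detail in the paper, which simply asserts that the resolutions exist; neither argument is more complete there.
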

We mention that $\mathbb H^2(C^\bullet)$ contains the obstructions.
\begin{proof}
We use \eqref{eq:longhyper} for a direct argument. Given any family of endomorphisms $(\chi_\lambda) \in B^0$, we use the notation
$$
\chi'=(\chi'_\lambda)_{\lambda\in Q'_0}, \text{ where } \chi'_\lambda = \frac{1}{\sqrt{n_\lambda}}\chi_\lambda.
$$
Now the condition $\Delta(\chi)=0$ exactly means that $\chi'$ defines an endomorphism of the holomorphic quiver bundle, which proves (i).

In order to prove (ii) we are looking at
$$
0\to H^0(A^1)\big/ {\rm Im} \left(H^0(B^0) \stackrel{\Delta}{\longrightarrow} H^0(A^1)\right) \stackrel{\sigma}{\longrightarrow} \mathbb H^1(C^\bullet) \stackrel{\tau}{\longrightarrow} {\rm Ker}\left( H^1(B^0)\stackrel{\Delta}{\longrightarrow} H^1(A^1)\right) \to 0.
$$

The elements of $H^1(B^0)$ stand for the infinitesimal deformations of the bundles $\cE_\lambda$, namely equivalence classes of extensions of the sheaves $\cE_\lambda$ (considered as independent of each other). Let $(0,\cO_D)$ be the double point, where $\cO_D = \C \oplus \epsilon \C$ with $\epsilon^2=0$. Then an extension $\ul \cE_\lambda$ of a sheaf $\cE_\lambda$ by itself, possesses a natural $\cO_D$-algebra structure. We write
$$
\ul \cE_\lambda : 0 \to \epsilon \cE_\lambda \to \wt \cE_\lambda \to \cE_\lambda \to 0.
$$
for the extension, where $\wt \cE_\lambda$ is equipped with the $\cO_D$-algebra structure. With respect to an open covering $\{U_i\}$ of the underlying manifold we introduce for all extensions $\ul\cE_\lambda$ initializations
$$
\ul \cE_\lambda|U_i : 0 \to \epsilon \cE_\lambda|U_i \,\to \,\cE_\lambda|U_i \oplus \epsilon\cE_\lambda|U_i \,\to \cE_\lambda|U_i \to 0
$$
with identifications over $U_{ij}=U_i \cap U_j$ being given by a cocycle $(\xi'_{ij}) = (\xi'_{ij,\lambda})_{\lambda\in Q'_0}$, $\xi'_{ij,\lambda}\in End(U_{ij},\cE_\lambda)$, which maps $\cE_\lambda|U_{ij}$ to $\epsilon \cE_\lambda|U_{ij}$. Let $\xi_{ij,\lambda}= \sqrt{n_\lambda}\, \xi'_{ij,\lambda}$.

We also have the homomorphisms $\phi_a$:
$$
\xymatrix{
\ul\cE_{ta}|U_i : & 0 \ar[r] & \epsilon \cE_{ta}|U_i \ar[r] \ar[d]_{\epsilon\phi_a|U_i} &\cE_{ta}|U_i \oplus \epsilon \cE_{ta}|U_i \ar[d]_{{\phi_a|U_i} \oplus \epsilon {\phi_a|U_i}}\ar[r] & \cE_{ta}|U_i \ar[r]\ar[d]_{\phi_a|U_i} & 0
\\
\ul\cE_{ha}|U_i : & 0 \ar[r]& \epsilon \cE_{ha}|U_i \ar[r] &\cE_{ha}|U_i \oplus \epsilon \cE_{ha}|U_i \ar[r] &\cE_{ha}|U_i \ar[r]& 0
}
$$

We assume that $(\xi_{ij,\lambda})_{\lambda\in Q'_0}$ represents an element of the kernel of $\Delta: H^1(B^0) \to H^1(A^1)$, i.e.\ $\Delta(\xi_{ij})$ is a coboundary in $A^1$ given by a $0$-cochain $(\theta_{i,\lambda})_{\lambda\in Q'_0}$ in $A^0$. The $\theta'_{i,\lambda}= (1/\sqrt{n_\lambda}) \, \theta_{i,\lambda}$ consist of endomorphisms or $\cE_\lambda|U_i$. These define a change of the trivializations of the $\cE_\lambda|U_i$.

After applying these, we can assume that $\Delta(\xi)=0$ or equivalently
$$
\xi'_{ij,ha} \circ(\phi_a|U_{ij})= (\phi_a|U_{ij})\circ \xi'_{ij, ta}
$$
hold.

These imply that the identification maps are compatible with $\pmb\phi $. Altogether we have extensions satisfying
$$
\wt \cE_{ta} \stackrel{\wt\phi_a}{\longrightarrow} \wt \cE_{ha}
$$
for all $a \in Q'_1$ over the double point that restrict to the given holomorphic quiver bundle, and we saw that the image of $\tau$ consist of all infinitesimal deformations of the sheaves $\cE_\lambda$ arising from deformations of the given quiver.

We are left with showing that the image of $\sigma$ corresponds to all
infinitesimal deformations of the given quiver that induce trivial deformations of
all $\cE_\lambda$.

More generally, we see that given an infinitesimal deformation $(\wt \cE_\lambda,
\wt \phi_a)$ the homomorphisms $\wt \phi_a$ are determined by the deformations
$\wt\cE_\lambda$ only unique up to adding homomorphisms of the form
$\beta\circ\chi_a\circ\nu$, with $\chi \in H^0(A^1)$. The images under $\sigma$ are given by $\chi_a\,:\,\cE_{ta} \,\to\,
\epsilon\cE_{ha}$ in the following way:
$$
\xymatrix{
\ul\cE_{ta} : & 0 \ar[r] & \epsilon \cE_{ta} \ar[r]^\alpha \ar[d]_{\epsilon\phi_a } &\wt\cE_{ta} \ar[d]^{\phantom{M}\wt\phi_a } \ar[r]^\nu & \cE_{ta} \ar[r]\ar[d]_{\phi_a } \ar[dll]|{_{\chi_a\phantom{MMMMMM}}} & 0
\\ \ul\cE_{ha} : & 0 \ar[r] &\epsilon\cE_{ha} \ar[r]^\beta&\wt\cE_{ha}\ar[r]^\mu & \cE_{ha} \ar[r]& 0
}
$$
This fact accounts for the image of $H^0(A^1)$ in $\mathbb H^1(C^\bullet)$.
The only homomorphisms that act ineffectively, are those which are induced by isomorphisms of the single
extensions $\ul\cE_\lambda$. These constitute the space $\Delta(H^0(B^0))$: Let $\varphi\,=\,(\varphi_\lambda)_{\lambda\in Q'_0}$
with $\varphi'_\lambda \,= \,(1/\sqrt{n_\lambda})\varphi_\lambda\,:\,{\mathcal E}_\lambda\,\to\,
\epsilon {\mathcal E}_\lambda$. Then $\Delta(\varphi)$ gives rise to the diagram
$$
\xymatrix{
\ul\cE_{ta} : & 0 \ar[r] & \epsilon \cE_{ta} \ar[r]^\alpha \ar@{=}[d] &\wt\cE_{ta} \ar[d] \ar[r]^\nu & \cE_{ta} \ar[r]\ar@{=}[d] \ar[dll]|{\phantom{MMMM}\varphi'_{ta}} & 0
\\
\ul\cE_{ta} : & 0 \ar[r] & \epsilon \cE_{ta} \ar[r]^\alpha \ar[d]_{\epsilon\phi_a } &\wt\cE_{ta} \ar[d]^{\phantom{}\wt\phi_a } \ar[r]^\nu & \cE_{ta} \ar[r]\ar[d]_{\phi_a } & 0
\\
\ul\cE_{ha} : & 0 \ar[r] &\epsilon\cE_{ha} \ar[r]^\beta \ar@{=}[d] &\wt\cE_{ha}\ar[r]^\mu \ar[d] & \cE_{ha} \ar[r]\ar@{=}[d]\ar[dll]|{\phantom{MMMM}\varphi'_{ha}}& 0\\
\ul\cE_{ha} : & 0 \ar[r] &\epsilon\cE_{ha} \ar[r]^\beta&\wt\cE_{ha}\ar[r]^\mu & \cE_{ha} \ar[r]& 0
}
$$
replacing $\wt \phi_a$ by $\wt\phi_a + \wt\phi_a\circ\alpha\circ\varphi'_{ta}\circ\nu - \beta\circ\varphi'_{ha}\circ\mu\circ\wt \phi_a $.
\end{proof}

\subsection{Families of quiver vortex structures, application to deformation theory}\label{se3.3}

In \linebreak \cite{A-G03b} the simplicity of stable quiver bundles was shown, a fact that implies the following statement (cf.\ \cite{f-s}, \cite[Theorem 4.3]{B-S09}).

\begin{proposition}\label{pr:impfct}
Given a stable holomorphic quiver bundle $\pmb{{\cR}}=\pmb{(\cE,\phi)}$ equipped with a quiver vortex structures, and a deformation over a space $(S,s_0)$, then the quiver vortex structure can be extended to a family of such structures over a neighborhood of $s_0$ in $S$. The hermitian quiver vortex metrics depend in a $\cinf$ way upon the parameter, whereas the maps $\phi_\lambda$ depend holomorphically upon the parameter $s\in S$.
\end{proposition}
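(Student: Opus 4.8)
The plan is to realize the quiver vortex equations \eqref{eq:HE} as the zero locus of a $\cinf$ map between Sobolev completions of spaces of Hermitian endomorphisms, and to solve it near $s_0$ by the implicit function theorem, the surjectivity of the linearization being supplied by stability (hence simplicity) of $\pmb{{\cR}}$. Since the base may be taken Stein and contractible, the underlying $\cinf$ bundle of each $\wt\cE_\lambda$ is trivial over $X\times S$; fixing such a trivialization turns the holomorphic family $\pmb{{\wt\cR}}=\pmb{(\wt\cE,\wt\phi)}$ into a family of Dolbeault operators $\db_{\cE_\lambda}(s)$ on fixed $\cinf$ bundles together with homomorphisms $\phi_a(s)$ (the maps denoted $\phi_\lambda$ in the statement), both depending holomorphically on $s$. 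This holomorphic dependence of the $\phi_a$ is exactly the second assertion and requires nothing further; the task is to produce, for each $s$ near $s_0$, Hermitian metrics solving \eqref{eq:HE} for the data $(\db_{\cE_\lambda}(s),\phi_a(s))$ and varying smoothly in $s$.

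Next I would parameterize metrics relative to the given vortex metric $k^0=(k^0_\lambda)$ by $k^0$-self-adjoint endomorphisms, $k_\lambda=k^0_\lambda e^{u_\lambda}$, and define the residual
$$
\Psi_\lambda(s,u)\,=\,\ii\, n_\lambda\,\Lambda F_{k_\lambda}+\sum_{a\in\inv h(\lambda)}\phi_a\circ\phi_a^{*}-\sum_{a\in\inv t(\lambda)}\phi_a^{*}\circ\phi_a-\tau'_\lambda\,\mathrm{id}_{\cE_\lambda},
$$
where curvature and adjoints are formed from $\db_{\cE_\lambda}(s)$, $\phi_a(s)$ and $k_\lambda$. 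This assembles into a smooth map $\Psi$ on suitable Sobolev spaces of self-adjoint endomorphisms with $\Psi(s_0,0)=0$ by hypothesis. Note that $\Psi$ depends only in a $\cinf$ (not holomorphic) manner on $s$, because the adjoints $\phi_a^{*}$ involve the metric and conjugation; this is precisely the source of the merely smooth dependence of the metrics asserted in the Proposition.

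The decisive step is the computation of the linearization $L=D_u\Psi(s_0,0)$. On each vertex the curvature term contributes $\ii\,n_\lambda\Lambda\db\pt u_\lambda$, a second-order elliptic operator of Laplace type, while differentiating $\phi_a\phi_a^{*}$ and $\phi_a^{*}\phi_a$ contributes zeroth-order couplings across the arrows; thus $L$ is elliptic and, with respect to the natural $L^2$ inner product, formally self-adjoint, hence Fredholm of index zero on $\bigoplus_\lambda\End(\cE_\lambda)$. Pairing $Lu$ with $u$ and integrating by parts, a Bochner-type identity produces $\sum_\lambda\|\db u_\lambda\|^2$ from the Laplace terms and a sum of squared norms of the expressions $[u,\phi]_a$ defining $\Delta$ from the couplings; therefore $u\in\ker L$ forces $\db u_\lambda=0$ and $\Delta(u)=0$. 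This says exactly that $u$ represents a class in $\mathbb H^0(C^\bullet)$, the space of infinitesimal automorphisms of $\pmb{{\cR}}$. Since $\pmb{{\cR}}$ is stable it is simple, so $\mathbb H^0(C^\bullet)\cong\C$ and the self-adjoint part of $\ker L$ is $\R\cdot\mathrm{id}$; by self-adjointness the cokernel is identified with the same space.

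Finally I would remove this one-dimensional kernel and cokernel by normalizing the metrics, for instance by imposing $\int_X\tr u_{\lambda_0}\,\omega_X^{\,n}=0$ at a fixed vertex $\lambda_0$ (with $n=\dim_\C X$), equivalently prescribing a determinant, so that $L$ restricts to an isomorphism between the normalized Sobolev spaces. The implicit function theorem then yields a unique normalized solution $u(s)$ of $\Psi(s,u)=0$ for $s$ in a neighborhood of $s_0$, depending $\cinf$ on $s$, and elliptic regularity upgrades $u(s)$ to a smooth family, giving the desired quiver vortex metrics $k_\lambda(s)$. The main obstacle is the kernel computation for $L$: one must verify that the cross terms arising from differentiating $\phi_a\phi_a^{*}$ and $\phi_a^{*}\phi_a$ assemble precisely into the commutators $[u,\phi]_a$, so that the integration by parts identifies $\ker L$ with $\mathbb H^0(C^\bullet)$; once this is in place, simplicity of $\pmb{{\cR}}$ closes the argument.
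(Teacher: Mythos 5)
The paper itself offers no proof of Proposition~\ref{pr:impfct}: it is stated as a consequence of the simplicity of stable quiver bundles and the reader is referred to \cite{f-s} and \cite[Theorem 4.3]{B-S09}, where exactly the implicit-function-theorem argument you describe is carried out for Hermite--Einstein bundles and for coupled vortex equations. Your proposal is therefore the intended argument, and its main steps are sound: the $\wt\phi_a$ are holomorphic in $s$ because they are part of the holomorphic family data; the linearization $L$ of \eqref{eq:HE} in the metric direction is elliptic, formally self-adjoint and of index zero; the Bochner pairing does produce $\sum_\lambda n_\lambda\|\db u_\lambda\|^2+\sum_a\|u_{ha}\phi_a-\phi_a u_{ta}\|^2$ (the cross terms from varying $\phi_a\phi_a^*$ at the head and $-\phi_a^*\phi_a$ at the tail polarize exactly to the square of the unweighted commutator), so $\ker L$ consists of self-adjoint holomorphic endomorphisms of the quiver bundle, which by simplicity is $\R\cdot(\mathrm{id}_{\cE_\lambda})_\lambda$ --- the simultaneous rescaling of all metrics, which genuinely preserves \eqref{eq:HE}, not just infinitesimally.

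One step you should make explicit before invoking the implicit function theorem: killing the kernel by the normalization $\int_X\tr u_{\lambda_0}\,\omega_X^n=0$ makes $L$ injective on the normalized space, but surjectivity onto the \emph{full} target fails, since $\mathrm{coker}\,L$ is the same one-dimensional space. The IFT on the orthogonal complement only produces $u(s)$ with $\Psi(s,u(s))\perp\mathrm{coker}\,L$; to conclude $\Psi(s,u(s))=0$ you must check that $\Psi$ takes values in that orthocomplement identically, i.e.\ that $\int_X\sum_\lambda\tr\Psi_\lambda(s,u)\,g\,dV=0$ for all $(s,u)$. This holds because each arrow contributes $\tr(\phi_a\phi_a^*)$ at its head and $-\tr(\phi_a^*\phi_a)$ at its tail, so these cancel in the sum over $\lambda$, while $\int_X \ii n_\lambda\Lambda F_{k_\lambda}\,g\,dV$ is a topological constant and the $\tau'_\lambda$ are normalized (the analogue of \eqref{eq:rkdeg}) precisely so that the total integral vanishes. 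With this observation added, and the cross-term computation written out, your argument is complete and agrees with the proofs in the cited references.
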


Given a holomorphic quiver bundle $\pmb{{\wt\cR}}=\pmb{(\wt\cE,\wt\phi)}$ over $X\times S$, equipped with a quiver vortex structure, the corresponding hermitian metrics on the holomorphic vector bundles $\wt \cE_\lambda$ will be denoted by $k^\lambda$ with curvature tensor $R^\lambda$ over $X\times S$.

Our aim us to introduce an acyclic resolution $C^\bullet \to C^{\bullet\bullet}$, from which the hypercohomology can be computed. A Dolbeault complex serves this purpose.

We consider the induced sheaf analogues $\cA^1$, $\cB^0$, $\cC^\bullet$ of the corresponding spaces $A^1$, $B^0$, and $C^\bullet$ from Section~\ref{sec3.2}, and the related Dolbeault resolutions by differential forms that satisfy the quiver relations from $\cK$. The latter property is indicated by a subscript $\cK$.
\begin{eqnarray*}
0\to \cA^1 &\to& \cA^{(0,\bullet)}(( \oplus _{a\in Q'_1} \Hom(\cE_{ta},\cE_{ha}))_\cK)\\
0 \to \cB^0 & \to & \cA^{(0,\bullet)}((\oplus _{\lambda\in Q'_0}\End(\cE_\lambda))_\cK)
\end{eqnarray*}
These fit into a resolution of the complex $\cC^\bullet$ of sheaves. We will compute the hypercohomology of
$$
\cC^\bullet: 0 \to \cB^0 \stackrel{\Delta}{\longrightarrow} \cA^1 \to 0 \to 0 \ldots
$$ from the induced double complex of the Dolbeault resolutions.

\begin{lemma}
A resolution $\cC^\bullet \to \cC^{\bullet\bullet}$ is defined by
$$
\cC^{p,q}=
\begin{cases}
\cA^{(0,q)}((\oplus _{\lambda\in Q'_0}\End(\cE_\lambda))_\cK) & \text{ for } p=0\\ \cA^{(0,q)}(( \oplus _{a\in Q'_1} \Hom(\cE_{ta},\cE_{ha}))_\cK)
& \text{ for } p=1\\
0 & \text{ for } p\neq 0,1
\end{cases}
$$
with boundary operators $\db: \cC^{p,q} \to \cC^{p,q+1}$ for $p=0,1$, $q\geq0$ and $(-1)^q \Delta : \cC^{0,q} \to \cC^{1,q}$ for $q\geq 0$.
\end{lemma}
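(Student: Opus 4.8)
The plan is to realize $\cC^{\bullet\bullet}$ as a double complex whose columns are fine (hence acyclic) Dolbeault resolutions of the two terms $\cB^0$ and $\cA^1$ of $\cC^\bullet$, and then to check that the two families of differentials anticommute, so that the associated total complex computes $\mathbb H^\bullet(\cC^\bullet)$. Concretely, for each fixed $p\in\{0,1\}$ the column $\cC^{p,0}\stackrel{\db}{\longrightarrow}\cC^{p,1}\stackrel{\db}{\longrightarrow}\cdots$, augmented by $\cB^0\hookrightarrow\cC^{0,0}$ (respectively $\cA^1\hookrightarrow\cC^{1,0}$), is meant to be the Dolbeault complex of a holomorphic vector bundle. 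First I would record the entirely formal points: the sheaves $\cA^{(0,q)}(\,\cdot\,)$ are fine, $\db^2=0$ is the integrability of the Dolbeault operator, and $\Delta\circ\Delta=0$ holds trivially because $\cC^{2,q}=0$.

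Next I would verify the double-complex relation. The operator $\Delta$ is $\cO$-linear of order zero — it is the bundle map sending $\xi$ to $\big(\tfrac{1}{\sqrt{n_{ha}}}\xi_{ha}\circ\phi_a-\tfrac{1}{\sqrt{n_{ta}}}\phi_a\circ\xi_{ta}\big)_a$ — so it extends coefficientwise to form-valued sections and defines $\Delta\colon\cC^{0,q}\to\cC^{1,q}$. Since each $\phi_a$ is holomorphic, $\db\phi_a=0$, and the Leibniz rule then gives $\db\circ\Delta=\Delta\circ\db$ on every $\cC^{0,q}$. Consequently the horizontal differential $d'=(-1)^q\Delta$ and the vertical differential $d''=\db$ anticommute: on $\cC^{0,q}$ one has $d''d'=(-1)^q\Delta\db$ while $d'd''=(-1)^{q+1}\Delta\db$, whence $d'd''+d''d'=0$ and the total differential $D=d'+d''$ satisfies $D^2=0$. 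This is exactly the reason the normalizing sign $(-1)^q$ is attached to $\Delta$, and it is a one-line computation once the commutation $\db\Delta=\Delta\db$ is in hand. At $q=0$ the horizontal map is $\Delta$ itself, so $\cC^\bullet$ augments into the bottom row $\cC^{\bullet,0}$ as a morphism of complexes.

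The hard part will be the exactness of the columns after the relations $\cK$ are imposed, i.e.\ the claim that the subscript-$\cK$ subobjects still admit the displayed Dolbeault resolutions. The relations in $\cK$ are $\C$-linear and are assembled from the fixed holomorphic homomorphisms $\phi_a$, so the condition ``$r(\,\cdot\,)=0$'' is $\cO$-linear with holomorphic coefficients; the substantive point is to check that the resulting $(\oplus_{\lambda}\End(\cE_\lambda))_\cK$ and $(\oplus_{a}\Hom(\cE_{ta},\cE_{ha}))_\cK$ are holomorphic subbundles (equivalently, that the linearized relations have locally constant rank), for then the sheaf of $(0,q)$-forms satisfying $\cK$ is precisely $\cA^{(0,q)}$ of that subbundle and each column becomes an honest Dolbeault complex, exact by the $\db$-Poincaré lemma. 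I would also record that $\Delta$ respects $\cK$, i.e.\ that it carries the $\cK$-subsheaf of $\cB^0$-valued forms into the $\cK$-subsheaf of $\cA^1$-valued forms; this is the compatibility $r(\Delta(\xi))=0$ already noted, which is nothing but the infinitesimal form of the gauge invariance of the identities $r(\phi)\equiv0$. Granting the subbundle identification, the columns are fine resolutions and the double-complex structure is in place, so $\mathrm{Tot}(\cC^{\bullet\bullet})$ resolves $\cC^\bullet$ and computes its hypercohomology, as asserted.
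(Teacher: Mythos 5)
Your proof is correct and follows essentially the same route as the paper, whose entire argument consists of the two observations that $\db$ and $\Delta$ commute because the morphisms $\phi_a$ are holomorphic, and that the Dolbeault complexes provide resolutions of $\cC^0$ and $\cC^1$. The one point you flag as ``the hard part'' --- that the $\cK$-constrained subsheaves are honest holomorphic subbundles so that their Dolbeault complexes are exact --- is passed over silently in the paper, so your explicitly conditional treatment of it is, if anything, more careful than the published argument.
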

\begin{proof}
The operators $\db$ and $\Delta$ commute because the morphisms $\phi_a$ are holomorphic.
Furthermore the Dolbeault complexes provide resolutions of both $\cC^0$ and $\cC^1$.
\end{proof}

\begin{lemma}\label{le:ellcompl}
Let $C^{\bullet\bullet} = \Gamma(X, \cC^{\bullet\bullet})$. Then the hypercohomology $\mathbb H^q(C^\bullet)$ is equal to the cohomology $H^q(C^{\bullet\bullet})$ of the double complex. The associated single complex is equal to an {\em elliptic complex} $\wt C^\bullet$ with
\begin{equation}\label{eq:simplcx}
\wt C^\bullet : 0 \to C^{0,0} \stackrel{d^{\, 0}}{\longrightarrow} C^{1,0} \oplus C^{0,1} \stackrel{d^1}{\longrightarrow} C^{1,1} \oplus C^{0,2} \stackrel{d^2}{\longrightarrow} C^{1,2} \oplus C^{0,3} \longrightarrow \ldots
\end{equation}
and
\begin{eqnarray}
d^{\, 0}\left((\xi_\lambda)_{\lambda \in Q'_0}\right) &=& \left((\Delta(\xi_\lambda))_a, (\db \xi_\lambda)_\lambda \right) \\
d^1\left((\chi_a)_{a \in Q'_1},(\zeta_\lambda)_{\lambda\in A'_0}\right) &=& \left((\db \chi_a)_a - (\Delta(\zeta_\lambda))_a , (\db( \zeta_\lambda))_\lambda\right)
\end{eqnarray}
\end{lemma}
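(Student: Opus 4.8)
The plan is to deduce all three assertions of the lemma from the standard formalism of double complexes, the only genuine inputs being the acyclicity of the resolving sheaves and the ellipticity of the Dolbeault complex.

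First I would settle the identification $\mathbb H^q(C^\bullet)=H^q(C^{\bullet\bullet})$. By the preceding lemma each row $\cC^{p,\bullet}$ is a $\db$-resolution of the corresponding term of $\cC^\bullet$, and $\db$ commutes with $\Delta$, so that with the sign $(-1)^q$ the total differential squares to zero and $\cC^{\bullet\bullet}$ is a genuine double complex resolving $\cC^\bullet$. Each $\cC^{p,q}$ is a sheaf of $\cinf$ $(0,q)$-forms valued in a smooth vector bundle, hence a module over the sheaf of smooth functions; being fine, it satisfies $H^i(X,\cC^{p,q})=0$ for $i>0$. The spectral sequence of the global double complex $\Gamma(X,\cC^{\bullet\bullet})=C^{\bullet\bullet}$ that takes $\db$-cohomology first then has $E_1^{p,q}=H^q(X,\cC^p)$ by this acyclicity, and therefore abuts to the hypercohomology $\mathbb H^{p+q}(C^\bullet)$; since that abutment is by construction the total cohomology $H^{p+q}(C^{\bullet\bullet})$, the two agree.

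Next I would write out the total complex explicitly. Since the horizontal index $p$ takes only the values $0$ and $1$, the degree-$n$ term is $C^{1,n-1}\oplus C^{0,n}$, which is exactly the shape of $\wt C^\bullet$ displayed in \eqref{eq:simplcx}. Applying the total differential $\db+(-1)^q\Delta$ on $C^{0,0}$ (where $q=0$) gives $d^{\,0}(\xi)=(\Delta(\xi),\db\xi)$, and on $C^{1,0}\oplus C^{0,1}$ it gives $d^1(\chi,\zeta)=(\db\chi-\Delta\zeta,\db\zeta)$, the minus sign coming from $q=1$ while $\Delta$ contributes nothing on the $p=1$ summand because $\cC^{2,\bullet}=0$. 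These are the stated formulas, so this step is a direct, purely bookkeeping verification. To prove ellipticity I would then compare principal symbols: the map $\Delta$ is algebraic — it is the bracket with the fixed holomorphic $\phi$ — hence of order zero and invisible to the principal symbol, so the symbol of each $d^n$ is that of $\db$ acting diagonally on the two summands. The symbol sequence of $\wt C^\bullet$ over $T^*X\setminus\{0\}$ therefore splits as the direct sum of the Dolbeault symbol complexes of $(\oplus_{\lambda\in Q'_0}\End(\cE_\lambda))_\cK$ and of $(\oplus_{a\in Q'_1}\Hom(\cE_{ta},\cE_{ha}))_\cK$, the second shifted by one in total degree. Each summand is the Koszul complex $\eta\mapsto\xi^{0,1}\wedge\eta$, exact for $\xi\neq 0$ by ellipticity of the Dolbeault complex, and a direct sum of exact sequences is exact; hence the symbol sequence is exact off the zero section, which is the definition of an elliptic complex.

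The one step that demands genuine care is the first. There one must verify that passing to the subsheaves cut out by the relations $\cK$ preserves fineness — it does, since these are $\cinf$-submodules and hence still admit partitions of unity — and that the rowwise $\db$-resolution property furnished by the preceding lemma holds termwise, so that the spectral-sequence comparison with hypercohomology is legitimate. The remaining two assertions are then formal consequences of the double-complex bookkeeping and of the order-zero nature of $\Delta$.
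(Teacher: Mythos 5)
The paper states this lemma without proof, relying only on the preceding lemma (that $\db$ and $\Delta$ commute and that the Dolbeault complexes resolve $\cC^0$ and $\cC^1$) together with standard facts; your argument --- fineness of the resolving sheaves giving $\mathbb H^q(C^\bullet)=H^q(C^{\bullet\bullet})$ via the double-complex spectral sequence, the bookkeeping for $d^{\,0}$ and $d^1$, and ellipticity because $\Delta$ is order zero so the symbol complex is a direct sum of Koszul complexes exact off the zero section --- is precisely the standard argument the authors leave implicit. Your proposal is correct, and your added care about the $\cK$-subsheaves remaining fine addresses the only point the paper glosses over.
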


We will need the following adjoint operators.

\begin{lemma}\label{le:adj}
The adjoint operators $(d^q)^*: \wt C^{q+1} \to \wt C^q$, $q= 0,1$ for \eqref{eq:simplcx}
are given by
$$
(d^{\, 0})^*\left( (\psi_a)_{a \in Q'_1} , (\zeta_\lambda)_{\lambda\in Q'_0} \right) =
\left(\frac{1}{\sqrt{n_\lambda}}\left(\sum_{a\in \inv h(\lambda)} \psi_a \phi^*_a - \sum_{a\in\inv t(\lambda)} \phi^*_a \psi_a\right) + \db^*\!( \zeta_\lambda) \right)_{\lambda\in Q'_0}
$$
$$
(d^1)^*\big( (\chi_a)_{a\in Q'_1}, (\xi_\lambda)_{\lambda\in Q'_0} \big)=\left( \left(\db^* \chi_a\right)_{\!a},\frac{1}{\sqrt{n_\lambda}} \left(\sum_{a\in \inv t(\lambda)} \phi^*_a \chi_a - \sum_{a\in \inv h(\lambda)} \chi_a \phi^*_a \right)+
\left(\db^*\xi_\lambda\right)_{\!\!\lambda}\right).
$$
\end{lemma}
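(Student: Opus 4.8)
The plan is to read off both adjoints directly from the $L^2$ inner products, using the fact that $d^0$ and $d^1$ each split into a purely algebraic piece coming from $\Delta$ and a Dolbeault piece handled by the ordinary $\db$-adjoint. First I would fix the inner products: on each summand $\cA^{(0,q)}(\End(\cE_\lambda))$ and $\cA^{(0,q)}(\Hom(\cE_{ta},\cE_{ha}))$ I use the $L^2$ pairing that combines the Hodge inner product on $(0,q)$-forms (from the \ka metric on $X$) with the fiberwise Hermitian pairing $\tr(A B^*)$ on the endomorphism and homomorphism bundles, the latter built from the quiver vortex metrics $k_\lambda$; the $\phi_a^*$ are the fiberwise adjoints of $\phi_a$ with respect to these metrics, exactly as in \eqref{eq:HE}. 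Direct sums carry the orthogonal sum of these pairings. With this fixed, the operators $\db^*$ appearing in the statement are just the usual formal adjoints of $\db$ on bundle-valued forms, so the one genuinely new ingredient is the adjoint of the bundle map $\Delta$.

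Second, I would compute the fiberwise adjoint $\Delta^*$ of $\Delta$. Since $\Delta(\xi)_a = \frac{1}{\sqrt{n_{ha}}}\xi_{ha}\circ\phi_a - \frac{1}{\sqrt{n_{ta}}}\phi_a\circ\xi_{ta}$ involves only the holomorphic morphisms $\phi_a$ and no derivatives, it acts on a form-valued section through its coefficient alone; hence its $L^2$-adjoint is the fiberwise trace-adjoint tensored with the identity on forms, and it suffices to compute pointwise. Expanding $\langle\Delta(\xi),\psi\rangle = \sum_a\langle\Delta(\xi)_a,\psi_a\rangle$, moving $\xi_{ha}$ and $\xi_{ta}$ out of the trace by cyclicity (using $(\psi_a\phi_a^*)^* = \phi_a\psi_a^*$ and $(\phi_a^*\psi_a)^* = \psi_a^*\phi_a$), and then regrouping the sum over arrows $a$ according to whether $ha=\lambda$ or $ta=\lambda$, I obtain $\Delta^*(\psi)_\lambda = \frac{1}{\sqrt{n_\lambda}}\left(\sum_{a\in\inv h(\lambda)}\psi_a\phi_a^* - \sum_{a\in\inv t(\lambda)}\phi_a^*\psi_a\right)$. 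This is precisely the algebraic part of $(d^0)^*$.

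Third, I would assemble the formulas from the block structure. Because $d^0(\xi) = (\Delta(\xi),\db\xi)$ is the orthogonal sum of its $\Delta$-component in $C^{1,0}$ and its $\db$-component in $C^{0,1}$, its adjoint is $(d^0)^*(\psi,\zeta) = \Delta^*(\psi) + \db^*\zeta$, which is the first displayed formula. The map $d^1$, sending $(\chi,\zeta)$ to $(\db\chi - \Delta(\zeta),\db\zeta)$, splits in the same way; its adjoint therefore sends a pair in $C^{1,1}\oplus C^{0,2}$ to the $\db^*$-image of its first component in the $C^{1,0}$-slot, and to $-\Delta^*$ of the first component plus $\db^*$ of the second in the $C^{0,1}$-slot. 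Substituting $\Delta^*$ and noting that the minus sign interchanges the head and tail sums, so that $-\Delta^*(\chi)_\lambda = \frac{1}{\sqrt{n_\lambda}}\left(\sum_{a\in\inv t(\lambda)}\phi_a^*\chi_a - \sum_{a\in\inv h(\lambda)}\chi_a\phi_a^*\right)$, yields the second displayed formula. This sign is exactly the $(-1)^q$ factor built into the double-complex differential.

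The proof presents no analytic difficulty, so the step I would treat most carefully is the combinatorial bookkeeping: one must keep track of which of $\phi_a^*\psi_a$ and $\psi_a\phi_a^*$ is an endomorphism of $\cE_{ta}$ as opposed to $\cE_{ha}$ (this is what assigns an arrow to the $\inv t(\lambda)$- or the $\inv h(\lambda)$-sum), attach each normalizing factor $1/\sqrt{n_\lambda}$ to the correct vertex, and carry the sign coming from $-\Delta(\zeta)$ in $d^1$. I would also confirm that all the operators involved restrict to the relation-constrained spaces, so that the formulas genuinely define the adjoints on the $\cK$-restricted complex $\wt C^\bullet$: the algebraic part $\Delta^*$ lands in $\oplus_\lambda\End(\cE_\lambda)$ automatically, and $\db$ preserves the relations because the $\phi_a$ are holomorphic, so the remaining point is compatibility of the $\db^*$-terms with the subscript-$\cK$ conditions, which I would verify using the $\cK$-compatibility of $\Delta$ recorded in Section~\ref{sec3.2}.
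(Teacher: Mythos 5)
Your proof is correct; the paper states Lemma~\ref{le:adj} without any proof, treating it as a routine verification, and your argument --- computing the fiberwise adjoint $\Delta^*$ of the zeroth-order operator $\Delta$ by cyclicity of the trace (so that $\tr(\xi_{ha}\phi_a\psi_a^*)=\langle\xi_{ha},\psi_a\phi_a^*\rangle$ and $\tr(\phi_a\xi_{ta}\psi_a^*)=\langle\xi_{ta},\phi_a^*\psi_a\rangle$), regrouping the sum over arrows by head and tail, and then adding the standard $\db^*$ terms with the sign inherited from the $(-1)^q\Delta$ convention in $d^1$ --- is exactly the computation the authors intend. The bookkeeping you single out (that $\psi_a\phi_a^*\in\End(\cE_{ha})$ while $\phi_a^*\psi_a\in\End(\cE_{ta})$, the placement of the $1/\sqrt{n_\lambda}$ factors, and the sign flip in the second formula) all checks out against the stated lemma.
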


Now we are in a position to compute tangent cohomology. We use the notation from
Section~\ref{ss:infdef}. Furthermore $(z^1,\ldots,z^n)$ are local coordinates on
the complex \ka manifold $(X,\omega_X)$. We use the semi-colon notation for
covariant derivatives. Note that the parameter space carries a flat structure. At
some point we will need local coordinates $(z^1,\ldots,z^n)$ on $X$, and we will
write $$\omega_X \,=\, \ii g_{\alpha\ol\beta}(z) dz^\alpha\we dz^\ol\beta\, .$$ The volume
element will be denoted by $g(z) dV$.

\begin{remark*} For simplicity, we use the letter $\phi_a$ also for homomorphisms $\wt\phi_a$ that depend on the holomorphic parameter.
\end{remark*}
\begin{proposition}\label{pr:mui}
Let a deformation of a holomorphic quiver bundle $\pmb{{\cR}}=\pmb{(\cE,\phi)}$ be given over a space $(S,s_0)$. Let $S\subset U \subset \C^k=\{s=(s^1,\ldots,s^k)\}$ be an embedding with $\mathrm{embdim}(S,s_0)= k$. Let
$$
\rho_{s_0} : T_{s_0}S \to \mathbb H^1(C^\bullet)
$$
be the \ks mapping. Then
\begin{equation}\label{eq:mui}
\mu_i= \left(- (\wt\phi_{a;i})_a , (\sqrt{n_\lambda}R^\lambda_{i\ol \beta}dz^\ol \beta)_\lambda \right) \in C^{1,0}\oplus C^{0,1}
\end{equation}
represents $\rho(\pt/\pt s^i)$. The tensor $\mu_i$ is harmonic:
$$
d^1(\mu_i) =0 \text{ and } (d^{\, 0})^*(\mu_i) =0.
$$
\end{proposition}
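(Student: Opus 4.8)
The plan is to prove the two assertions in turn: first that the pair $\mu_i$ of \eqref{eq:mui} is a Dolbeault representative of the \ks class $\rho(\pt/\pt s^i)$ for the elliptic complex $\wt C^\bullet$ of \eqref{eq:simplcx}, and then that this representative is harmonic. Throughout I work at the distinguished point $s_0$ and invoke Proposition~\ref{pr:impfct}: the family carries quiver vortex metrics $k^\lambda$ that are $\cinf$ in $s$, with curvatures $R^\lambda$ over $X\times S$, while the maps $\wt\phi_a$ are holomorphic in $s$. I write $\nabla_i,\nabla_\alpha,\nabla_{\ol\beta}$ for the covariant derivatives in the directions $\pt/\pt s^i,\pt/\pt z^\alpha,\pt/\pt\ol z^\beta$. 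For the identification with $\rho(\pt/\pt s^i)$ I would use $\mathbb H^1(C^\bullet)=H^1(\wt C^\bullet)$ (Lemma~\ref{le:ellcompl}) and read off the class in the Dolbeault model $C^{1,0}\oplus C^{0,1}$ by differentiating the family data: the holomorphic maps $\wt\phi_a$ contribute their derivative, the $C^{1,0}$-component $-\wt\phi_{a;i}$, and the variation of the holomorphic structure on $\wt\cE_\lambda$, expressed in the unitary trivialization of the Chern connection of $k^\lambda$, is the standard mixed-curvature representative $R^\lambda_{i\ol\beta}dz^\ol\beta\in C^{0,1}$. The weight $\sqrt{n_\lambda}$ is forced by the weights $1/\sqrt{n_\bullet}$ in the definition of $\Delta$, so that the two components sit in the same complex; this produces exactly $\mu_i$.

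For $d^1(\mu_i)=0$ I would treat the two components of $d^1$ separately. The $C^{0,2}$-component is $\db(\sqrt{n_\lambda}R^\lambda_{i\ol\beta}dz^\ol\beta)$, and this vanishes by the second Bianchi identity for the Chern connection, which gives $\db(R^\lambda_{i\ol\beta}dz^\ol\beta)=0$ (the curvature of the family is $\db$-closed over $X\times S$, and the relevant bidegree isolates precisely this relation). For the $C^{1,1}$-component I differentiate the holomorphicity of $\wt\phi_a$: since $\db_{X\times S}\wt\phi_a=0$, commuting $\nabla_i$ past $\nabla_{\ol\beta}$ introduces the curvature of $\Hom(\cE_{ta},\cE_{ha})$ and yields $\db\wt\phi_{a;i}=-(R^{ha}_{i\ol\beta}\circ\phi_a-\phi_a\circ R^{ta}_{i\ol\beta})dz^\ol\beta$. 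Since $\Delta(\sqrt{n_\lambda}R^\lambda_{i\ol\beta}dz^\ol\beta)_a=(R^{ha}_{i\ol\beta}\circ\phi_a-\phi_a\circ R^{ta}_{i\ol\beta})dz^\ol\beta$, the $C^{1,1}$-component $\db(-\wt\phi_{a;i})-\Delta(\cdots)_a$ vanishes. The minus sign on $\wt\phi_{a;i}$ in \eqref{eq:mui} is chosen precisely for this cancellation.

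For $(d^{\,0})^*(\mu_i)=0$ I would differentiate the quiver vortex equation \eqref{eq:HE}, in its family form over $X\times S$, along $\pt/\pt s^i$; the right-hand side is constant, so its covariant derivative vanishes. Two inputs simplify the left-hand side. First, the Hermitian adjoint of a holomorphic map is anti-holomorphic, so $\nabla_i\wt\phi_a^*=0$, and only the terms $\wt\phi_{a;i}\phi^*_a$ and $\phi^*_a\wt\phi_{a;i}$ survive from the quadratic quiver terms. Second, the Bianchi identity gives $\nabla_i R^\lambda_{\alpha\ol\beta}=\nabla_\alpha R^\lambda_{i\ol\beta}$, whence $\nabla_i(\ii\Lambda F_{k^\lambda})=\gba\nabla_\alpha R^\lambda_{i\ol\beta}=-\dbs(R^\lambda_{i\ol\beta}dz^\ol\beta)$ by the \ka identity expressing $\dbs$ through the contracted covariant derivative. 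The differentiated equation is then exactly $\sum_{a\in\inv h(\lambda)}\wt\phi_{a;i}\phi^*_a-\sum_{a\in\inv t(\lambda)}\phi^*_a\wt\phi_{a;i}=n_\lambda\,\dbs(R^\lambda_{i\ol\beta}dz^\ol\beta)$. Substituting into the formula for $(d^{\,0})^*$ from Lemma~\ref{le:adj}, the weight $1/\sqrt{n_\lambda}$ multiplies the quiver terms and $\sqrt{n_\lambda}$ multiplies $\dbs(R^\lambda_{i\ol\beta}dz^\ol\beta)$, so the two contributions become $-\sqrt{n_\lambda}\,\dbs(R^\lambda_{i\ol\beta}dz^\ol\beta)$ and $+\sqrt{n_\lambda}\,\dbs(R^\lambda_{i\ol\beta}dz^\ol\beta)$; they cancel and $(d^{\,0})^*(\mu_i)=0$.

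The main obstacle is this last step: recognizing the $s^i$-derivative of the vortex equation \eqref{eq:HE} as the assertion $(d^{\,0})^*(\mu_i)=0$. It requires trading the parameter-direction covariant derivative of the curvature for a base-direction one via Bianchi, converting this into $\dbs$ through the \ka identity, and finally checking that the weights $n_\lambda$, $1/\sqrt{n_\lambda}$ and $\sqrt{n_\lambda}$ match. This is precisely where the special vortex metrics are indispensable, consistent with the remark that from Section~\ref{se3.3} onward the discussion is restricted to such metrics. By contrast, $d^1(\mu_i)=0$ uses only holomorphicity of the $\wt\phi_a$ together with the Bianchi identity, and the identification of $\mu_i$ with $\rho(\pt/\pt s^i)$ is the standard Dolbeault reading of the \ks class.
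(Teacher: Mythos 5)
Your proposal is correct and follows essentially the same route as the paper: the curvature component $R^\lambda_{i\ol\beta}dz^{\ol\beta}$ is taken as the standard representative of the infinitesimal deformation of $\wt\cE_\lambda$ (the paper cites \cite{S-T-92} for this), $d^1(\mu_i)=0$ follows from $\db$-closedness of the curvature together with the Ricci identity and holomorphicity of the $\wt\phi_a$, and $(d^{\,0})^*(\mu_i)=0$ follows by differentiating the quiver vortex equation \eqref{eq:HE} in the direction $\pt/\pt s^i$. You merely spell out the Bianchi/K\"ahler-identity bookkeeping and the $\sqrt{n_\lambda}$ weight matching that the paper leaves implicit.
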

The $d^*$-closedness of $\mu_i$ can be interpreted as infinitesimal quiver vortex condition.
\begin{proof}
For hermitian metrics on families of holomorphic vector bundles the curvature component is known to represent the infinitesimal deformation (\cite{S-T-92}). Now $$
d^1(\mu_i)=\left( \db( \phi_{a;i})_a - (\Delta(\sqrt{n_\lambda}R^\lambda_{i \ol \beta} dz^{\ol\beta}))_a, \db(\sqrt{n_\lambda} R^\lambda_{i\ol\beta}dz^{\ol\beta})_\lambda\right).
$$
The curvature forms are obviously $\db$-closed, and the first components of $d^1(\mu_i)$ are given by
$$
\phi_{a;i\ol\beta} -R^{ha}_{i\ol\beta}\phi_a+ \phi_a R^{ta}_{i\ol\beta} = \phi_{a;\ol\beta i} =0.
$$
Next
$$
\left((d^{\, 0})^*(\mu_i)\right)_\lambda = \left( \db^*(\sqrt{n_\lambda}R^\lambda_{i\ol\beta}dz^{\ol\beta}) + \sum_{a\in \inv h(\lambda)} \frac{1}{\sqrt{n_\lambda}}\phi_{a;i}\phi^*_a -\sum_{a\in \inv{t}(\lambda)}\frac{1}{\sqrt{n_\lambda}}\phi_{a;i}\phi^*_a \right)_{\!\!\lambda}.
$$
Taking the derivative with respect to $s^i$ of the quiver vortex equation \eqref{eq:HE} shows that expression vanishes.
\end{proof}

\subsection{Generalized \wp metric for holomorphic quiver bundles}

We define a natural Hermitian structure in the tangent spaces $T_sS$ that is
compatible with base change morphisms. We call it $L^2$-metric or (generalized)
{\em \wp metric}. We use the notation $\pt_i$ for $\pt/ \pt s^i$.

\begin{definition}\label{dwp1}
A natural Hermitian structure $G^{WP}$ on $T_{s_0}S$ is given by
\begin{gather}\label{eq:wp}
G^{WP}_{i \ol \jmath} := G^{WP}(\pt_i,\pt_j) := \langle \mu_i,\mu_j\rangle := \int_X \tr(\mu_i \mu^*_{\ol\jmath})\\ \hspace{4cm}
:= \int_X \sum_{a \in Q'_1}\tr( \phi_{a;i} \phi^*_{a;\ol\jmath} ) gdV + \int_X \sum_{\lambda\in Q'_0}n_\lambda \tr(g^{\ol\beta\alpha} R^\lambda_{i \ol\beta}R^\lambda_{\alpha\ol\jmath}) gdV \nonumber
\end{gather}

We set
$$
\omega_{WP}= \ii G^{WP}_{i \ol\jmath} ds^i\we ds^\ol\jmath.
$$
\end{definition}

\section{Fiber integral formula and \ka property of the $L^2$-metric}

The \ka property for $\omega_{WP}$ can be derived from \eqref{eq:wp} using partial integration. In view of further applications we prove a fiber integral formula for $\omega_{WP}$.

We will use the technique of fiber integration for differentiable forms, which corresponds to the push-forward of currents. Let $n=\dim_\C X$. Given a differential form $\eta$ on $X\times S$ of degree $k + 2n$, the fiber integral
$$
\int_X \eta = \int_{X\times S/S} \eta
$$
is a differential form of degree $k$. Taking fiber integrals is type preserving, commutes with taking exterior derivatives $d$, $\pt$, and $\ol\pt$. In particular the fiber integral of a closed form is closed, and the integral of a form that possesses local $\pt\db$-potentials possesses local $\pt\db$-potentials. In our situation the base space $S$ need not be smooth, however the given $(n+1,n+1)$-forms on $X\times S$ will possess local extensions to smooth ambient spaces as $\pt\db$-exact forms of class $\cinf$ so that the fiber integral produces a $(1,1)$-form on the base, which possesses a $\pt\db$-potential on a smooth ambient space.

We set
$$
\eta^k=\frac{1}{k!} \eta\we\ldots\we\eta
$$
for any differential form $\eta$. (The notion of $c^2_1(\cE)$ is not being altered.)

\begin{proposition}\label{propfb1}
Denote by $\Omega^\lambda=R^\lambda_{\alpha\ol\beta}dz^\alpha\we dz^\ol\beta$ with $\lambda\in Q'_0$ the curvature form of $(\wt\cE_\lambda,k^\lambda)$. Then
\begin{gather*}
\hspace{-5cm}\omega_{WP}=\strut\frac{1}{2} \sum_{\lambda\in Q'_0}\left(
\int_X n_\lambda \tr(\Omega^\lambda\we\Omega^\lambda)\we \omega^{n-1}_X \right)+ \\
\strut \hspace{3cm} \ii \sum_{\lambda\in Q'_0}\left( \tau'_\lambda\int_X
\tr(\Omega^\lambda)\we \omega^n_X \right) +\sum_{a\in Q'_1}\idb
\left(\int_X \tr (\phi_a \phi^*_a) \omega^n_X\right)\, ,
\end{gather*}
where $\tau'_\lambda$ are as in \eqref{eq:HE}.
\end{proposition}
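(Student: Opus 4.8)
The plan is to extract from each of the three terms on the right its component along $\ii\, ds^i\we ds^\ol\jmath$ and to check that the sum equals $\ii$ times the two summands of $G^{WP}_{i\ol\jmath}$ in \eqref{eq:wp}; here $\mu_i$, and hence the coefficients $\phi_{a;i}$ and $R^\lambda_{i\ol\beta}$ entering \eqref{eq:wp}, are the explicit harmonic representatives furnished by Proposition~\ref{pr:mui}. I view $\Omega^\lambda$ as the full Chern curvature of $(\wt\cE_\lambda,k^\lambda)$ on $X\times S$ and split it by type,
$$
\Omega^\lambda \,=\, R^\lambda_{\alpha\ol\beta}\,dz^\alpha\we dz^\ol\beta \,+\, R^\lambda_{i\ol\beta}\,ds^i\we dz^\ol\beta \,+\, R^\lambda_{\alpha\ol\jmath}\,dz^\alpha\we ds^\ol\jmath \,+\, R^\lambda_{i\ol\jmath}\,ds^i\we ds^\ol\jmath\, .
$$
Since a fiber integral lowers bidegree by $(n,n)$ and the factor $\omega^{n-1}_X$ already carries bidegree $(n-1,n-1)$ along $X$, only the part of the integrand of bidegree $(n,n)$ along $X$ and $(1,1)$ along $S$ can survive, so the relevant terms are singled out automatically.

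First I treat the leading term. As the $2$-forms commute under the trace, the surviving part of $\tr(\Omega^\lambda\we\Omega^\lambda)$ is $2\,\tr(R^\lambda_{\alpha\ol\beta}R^\lambda_{i\ol\jmath})\,dz^\alpha\we dz^\ol\beta\we ds^i\we ds^\ol\jmath + 2\,\tr(R^\lambda_{i\ol\beta}R^\lambda_{\alpha\ol\jmath})\,ds^i\we dz^\ol\beta\we dz^\alpha\we ds^\ol\jmath$. Reordering the $X$-factors and invoking the Kähler identity $\ii\, dz^\alpha\we dz^\ol\beta\we\omega^{n-1}_X/(n-1)! = \gba\,\omega^n_X/n!$ turns $\we\,\omega^{n-1}_X$ followed by fiber integration into contraction by $\gba$ against the volume form. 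The $R^\lambda_{i\ol\beta}R^\lambda_{\alpha\ol\jmath}$ contribution then reproduces, up to the overall constant, the curvature summand $\int_X\sum_\lambda n_\lambda\tr(\gba R^\lambda_{i\ol\beta}R^\lambda_{\alpha\ol\jmath})g\,dV$ of \eqref{eq:wp}, while the $R^\lambda_{\alpha\ol\beta}R^\lambda_{i\ol\jmath}$ contribution produces an auxiliary expression $P_{i\ol\jmath}:=\sum_\lambda n_\lambda\int_X\gba\tr(R^\lambda_{\alpha\ol\beta}R^\lambda_{i\ol\jmath})g\,dV$, which is built from the fiberwise trace $\Lambda F_{k_\lambda}$ of the curvature.

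The decisive bookkeeping, which I expect to be the main obstacle, is the evaluation of the third term. I differentiate directly on the flat base $S$. Since $\phi_a$ is holomorphic on $X\times S$ one has $\db\phi_a=0$ and, by compatibility of the Chern connection with the metric, $\nabla_i\phi^*_a=0$; together with the Ricci identity $\nabla_i\nabla_\ol\jmath\phi^*_a = R^{ta}_{i\ol\jmath}\phi^*_a-\phi^*_a R^{ha}_{i\ol\jmath}$ this yields
$$
\pt_i\pt_\ol\jmath\,\tr(\phi_a\phi^*_a)\,=\,\tr(\phi_{a;i}\,\phi^*_{a;\ol\jmath}) + \tr(\phi^*_a\phi_a\,R^{ta}_{i\ol\jmath}) - \tr(\phi_a\phi^*_a\,R^{ha}_{i\ol\jmath})\, .
$$
Integrating over $X$, summing over $a\in Q'_1$ and re-indexing the arrows by their tails and heads collects the last two traces into $\sum_\lambda\tr\big((\sum_{a\in\inv t(\lambda)}\phi^*_a\phi_a-\sum_{a\in\inv h(\lambda)}\phi_a\phi^*_a)R^\lambda_{i\ol\jmath}\big)$. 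Here the quiver vortex equation \eqref{eq:HE} is the bridge: it rewrites the inner bracket as $\ii n_\lambda\Lambda F_{k_\lambda}-\tau'_\lambda\,\mathrm{id}$, so that $\idb$ of the third term contributes the first \wp summand $\int_X\sum_a\tr(\phi_{a;i}\phi^*_{a;\ol\jmath})g\,dV$, together with exactly $P_{i\ol\jmath}$ and a multiple of the expression $\sum_\lambda\tau'_\lambda\int_X\tr(R^\lambda_{i\ol\jmath})\,\omega^n_X$.

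It remains to collect terms. The middle term $\ii\sum_\lambda\tau'_\lambda\int_X\tr(\Omega^\lambda)\we\omega^n_X$ has as its only surviving part the expression $\sum_\lambda\tau'_\lambda\int_X\tr(R^\lambda_{i\ol\jmath})\,\omega^n_X$ (times $\ii\,ds^i\we ds^\ol\jmath$), which cancels the $\tau'_\lambda$-expression produced by the third term; likewise the two copies of $P_{i\ol\jmath}$, coming from the leading and the third term, cancel. What is left is the curvature summand from the leading term and the $\phi$-summand from the third term, and these together are precisely $G^{WP}_{i\ol\jmath}$. The single point that demands genuine care is the chain of normalising constants — the factor $1/2$, the powers of $\ii$ entering through $\ii\, dz^\alpha\we dz^\ol\beta\we\omega^{n-1}_X/(n-1)! = \gba\,\omega^n_X/n!$ and through $\idb$, and the volume normalisation $\omega^n_X=n!\,g\,dV$: these must be arranged so that the two copies of $P_{i\ol\jmath}$ and the two $\tau'_\lambda$-expressions occur with opposite signs while the surviving summands occur with equal coefficient. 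Checking that the signs of the Kähler and Ricci identities conspire to produce these cancellations is the step I would verify most carefully. Finally, legitimacy of the fiber integrals and of applying $\idb$ over a possibly singular $S$ is guaranteed by the local smooth ambient $\pt\db$-extensions described before Proposition~\ref{propfb1}.
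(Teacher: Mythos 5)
Your proposal is correct and follows essentially the same route as the paper: expand $\Omega^\lambda$ by type, reduce $\we\,\omega_X^{n-1}$ and fiber integration to contraction by $g^{\ol\beta\alpha}$, use holomorphicity of $\phi_a$ together with the Ricci identity to evaluate $\pt_i\pt_{\ol\jmath}\tr(\phi_a\phi_a^*)$, and invoke the quiver vortex equation \eqref{eq:HE} to cancel the $\Lambda F$-type and $\tau'_\lambda$-terms against the second and third summands. The only (immaterial) difference is organizational --- the paper substitutes \eqref{eq:HE} into the expansion of the leading term, whereas you apply it to the $\pt\db$-term and cancel the resulting $P_{i\ol\jmath}$ against the cross term of $\tr(\Omega^\lambda\we\Omega^\lambda)$.
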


\begin{proof}
\begin{gather*}
\frac{1}{2}\sum_{\lambda\in Q'_0}\int_X
n_\lambda\tr(\Omega^\lambda\we\Omega^\lambda)\we \omega^{n-1}_X = -
\frac{1}{2}\sum_{\lambda\in Q'_0}\int_X
n_\lambda\tr(\ii\Omega^\lambda\we\ii\Omega^\lambda) \omega^{n-1}_X = \\
\ii\left(\sum_{\lambda\in Q'_0}\int_X
n_\lambda(R^\lambda_{\alpha\ol\jmath}R^\lambda_{i\ol\beta}
-R^\lambda_{\alpha\ol\beta}R^\lambda_{i\ol\jmath} )g^{\ol\beta\alpha} g dV\right)
ds^i\we ds^\ol\jmath=\\
\ii\left(\sum_{\lambda\in Q'_0}\int_X
n_\lambda(R^\lambda_{\alpha\ol\jmath}R^\lambda_{i\ol\beta} )g^{\ol\beta\alpha} g
dV\right) ds^i\we ds^\ol\jmath +
\ii \Bigg(\sum_{\lambda\in Q'_0} \int_X \Bigg(\sum_{a\in\inv h(\lambda)}\tr(
\phi_a\phi^*_a R^\lambda_{i\ol\jmath})\\
-\sum_{a\in\inv t(\lambda)} \tr(\phi^*_a\phi_a
R^\lambda_{i\ol\jmath}) \Bigg)gdV
-\sum_{\lambda\in Q'_0} \tau'_\lambda \int_X \tr(R^\lambda_{i \ol \jmath} ) gdV
\Bigg)ds^i\we ds^\ol\jmath
\end{gather*}
Now
$$
\tr\left(\phi_a\phi^*_a R^{ha}_{i\ol\jmath} -\phi^*_a\phi_a R^{ta}_{i\ol\jmath}\right)=\tr\left(\phi^*_a(\phi_{a;\ol\jmath i} - \phi_{a;i\ol\jmath} )\right) = -\tr(\phi^*_a \phi_{a;i\ol\jmath})=\tr((\phi^*_{a;\ol\jmath}\phi_{a;i})) - \left(\tr(\phi^*_a \phi_a)\right)_{i\ol\jmath}.
$$
This proves the proposition.
\end{proof}

\begin{corollary}\label{corfb1}
The \wp form $\omega_{WP}$ possesses locally a \ka potential. In particular, $\omega_{WP}$ is a \ka form.
\end{corollary}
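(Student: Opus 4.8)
The plan is to read off the corollary directly from the fiber integral formula in Proposition \ref{propfb1}. The key observation is that each of the three summands on the right-hand side of that formula is of a shape from which a local $\pt\db$-potential can be extracted, and it is precisely the $\idb$-term that carries the essential content. I would work on a (smooth, Stein, contractible) ambient neighborhood of $s_0$ as arranged in Section~\ref{se:def}, so that all the fiber integrals are honest smooth $(1,1)$-forms and the closedness/potential statements made in the opening of this section are available.

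\emph{First} I would treat the last summand. The fiber integral
$$
\Phi \,:=\, \sum_{a\in Q'_1}\int_X \tr(\phi_a\phi^*_a)\,\omega^n_X
$$
is a globally defined smooth real function on the base, and the corresponding term in $\omega_{WP}$ is literally $\idb\Phi$. Thus this piece already has a global $\pt\db$-potential, namely $\Phi$ itself. \emph{Second}, for the two curvature terms I would invoke the principle, stated in the preamble to this section, that fiber integration commutes with $d$, $\pt$, $\db$ and carries forms admitting local $\pt\db$-potentials to forms admitting local $\pt\db$-potentials. The first term involves $\tr(\Omega^\lambda\we\Omega^\lambda)$, which is (up to a constant) the second Chern form $c_2$-type expression of the Hermitian bundle $(\wt\cE_\lambda,k^\lambda)$ on the total space $X\times S$, and the second involves $\tr(\Omega^\lambda)$, the first Chern form. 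These are closed forms representing characteristic classes, and locally on the smooth ambient space they are $\pt\db$-exact; fiber-integrating against the fixed $\omega^{n-1}_X$ or $\omega^n_X$ pulled back from $X$ then yields $(1,1)$-forms on the base that locally possess $\pt\db$-potentials.

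\emph{Combining} the three, $\omega_{WP}$ is a sum of $(1,1)$-forms each admitting a local $\pt\db$-potential, hence $\omega_{WP}$ itself does; writing $\omega_{WP}=\idb\Psi$ locally for a real function $\Psi$ exhibits $\Psi$ as the asserted local \ka potential. Since a real $(1,1)$-form with a local $\pt\db$-potential is automatically $d$-closed (as $d\,\idb\Psi = 0$), and since $\omega_{WP}$ is the fundamental form of the Hermitian structure $G^{WP}$ by Definition~\ref{dwp1}, this closedness is exactly the statement that $G^{WP}$ is \ka.

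\emph{The main obstacle} is not the algebra but the regularity bookkeeping at the (possibly singular, non-reduced) base point $s_0$: one must justify that the fiber integrals extend to $\pt\db$-exact $C^\infty$ forms on a smooth ambient space, so that "local $\pt\db$-potential" is meaningful on $S$ itself. This is precisely the hypothesis recorded at the start of this section—that the relevant $(n+1,n+1)$-forms on $X\times S$ admit local extensions to smooth ambient spaces as $\pt\db$-exact smooth forms—so once that is granted the argument is a direct reading of Proposition~\ref{propfb1}, and I would simply cite that passage rather than reprove it.
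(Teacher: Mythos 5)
Your argument is correct and is essentially the paper's own (the paper leaves the corollary's proof implicit, as an immediate consequence of Proposition \ref{propfb1} together with the properties of fiber integration recorded at the start of the section): the $\idb$-term has the explicit global potential $\Phi$, the Chern-type terms $\tr(\Omega^\lambda\we\Omega^\lambda)\we\omega_X^{n-1}$ and $\tr(\Omega^\lambda)\we\omega_X^{n}$ are closed forms with local $\pt\db$-potentials on a smooth ambient space, and fiber integration preserves both closedness and the existence of such potentials. Your added care about the possibly singular base and the extension to a smooth ambient space matches exactly the hypothesis the paper records in the preamble, so nothing is missing.
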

We express the \wp form in terms of a fiber integral of Chern character forms.

\begin{proposition}\label{propfb2}
\begin{equation}
\frac{1}{4\pi^2} \omega_{WP}\,=
-\sum_{\lambda\in Q'_0}n_\lambda\int_X \mathrm{ch_2}\left( (\cE_\lambda, k^\lambda)\right)\we \omega^{n-1}_X
\end{equation}
$$
+ \sum_{\lambda\in Q'_0}\frac{\tau'_\lambda}{2\pi }\int_X\mathrm{c_1}(\cE_\lambda, k^\lambda )\we \omega^n_X + \pt \db \left( \frac{\ii}{4\pi^2}\sum_{a\in Q'_1}\int_X \tr(\phi_a \phi^*_a) \omega^n_X \right).
$$
\end{proposition}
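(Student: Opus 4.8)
The plan is to derive Proposition~\ref{propfb2} directly from Proposition~\ref{propfb1} by rewriting each of the three summands in terms of the Chern--Weil characteristic forms of $(\cE_\lambda, k^\lambda)$. The only additional input needed is the normalization of the characteristic forms attached to the curvature $\Omega^\lambda$: from the total Chern character form $\mathrm{ch}(\cE_\lambda, k^\lambda) = \tr\exp\bigl(\tfrac{\ii}{2\pi}\Omega^\lambda\bigr)$ one reads off
$$
c_1(\cE_\lambda, k^\lambda) = \frac{\ii}{2\pi}\tr(\Omega^\lambda), \qquad \mathrm{ch}_2(\cE_\lambda, k^\lambda) = -\frac{1}{8\pi^2}\tr(\Omega^\lambda\we\Omega^\lambda).
$$

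I would then handle the three terms of Proposition~\ref{propfb1} separately. Solving the second identity gives $\tr(\Omega^\lambda\we\Omega^\lambda) = -8\pi^2\,\mathrm{ch}_2(\cE_\lambda, k^\lambda)$, so dividing the first summand by $4\pi^2$ produces the coefficient $-1$ and yields the $\mathrm{ch}_2$ term. For the second summand I would substitute $\tr(\Omega^\lambda) = -2\pi\ii\, c_1(\cE_\lambda, k^\lambda)$; the prefactor $\ii$ then combines with $-2\pi\ii$ to give $2\pi$, and after division by $4\pi^2$ this becomes the stated coefficient $\tau'_\lambda/(2\pi)$. For the third summand I would only use that $\idb = \ii\,\pt\db$, so that dividing by $4\pi^2$ lets one pull the constant $\ii/(4\pi^2)$ inside the $\pt\db$ operator, reproducing the final $\pt\db$-exact term verbatim.

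The argument is thus essentially bookkeeping, and the only place requiring genuine care --- hence the (modest) main obstacle --- is pinning down the sign and $2\pi$ conventions for $c_1$ and $\mathrm{ch}_2$ and checking that the factors of $\ii$ carried by the curvature terms in Proposition~\ref{propfb1} cancel correctly against those in the Chern--Weil definitions. Once these normalizations are fixed, collecting the three rewritten summands and dividing throughout by $4\pi^2$ gives the asserted identity.
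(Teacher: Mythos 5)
Your proposal is correct and coincides with the paper's own argument: the paper likewise deduces Proposition~\ref{propfb2} from Proposition~\ref{propfb1} by substituting the Chern character form $\mathrm{ch}(\cE,h)=\sum_k\left(\frac{\ii}{2\pi}\right)^k\tr\left(\Omega(\cE,h)\right)^k$ (with the stated convention $\eta^k=\frac{1}{k!}\eta\we\cdots\we\eta$, which gives exactly your normalizations $c_1=\frac{\ii}{2\pi}\tr\Omega$ and $\mathrm{ch}_2=-\frac{1}{8\pi^2}\tr(\Omega\we\Omega)$). The bookkeeping of the factors of $\ii$ and $2\pi$ that you carry out is precisely what the paper leaves implicit.
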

Observe that $ \sum_{\lambda\in Q'_0}\mathrm{ch_2}\left( (\cE_\lambda, k^\lambda) \right)= \mathrm{ch}_2\left( \oplus_{\lambda\in Q'_0}(\cE_\lambda, k^\lambda)\right)$.

\begin{proof}
For a general holomorphic hermitian bundle $(\cE,h)$
\begin{equation}\label{eq:chchar}
\mathrm{ch}(\cE,h)= \sum^n_{k=0}\left(\frac{\ii}{2\pi}\right)^{\!k} \tr\left(\Omega(\cE,h)\right)^k
\end{equation}
holds (in terms of the convention on powers of differential forms). This proves the
proposition.
\end{proof}

\section{Determinant line bundle and Quillen metric}

We are given a proper, smooth holomorphic map $$f\,:\,\cX \,\to\, S\, ,$$ a \ka form
$\omega_\cX$ and a locally free sheaf $\cF$ on $\cX$. The determinant line bundle
of $\cF$ on $S$ is by definition the Knudsen-Mumford determinant line bundle
$\lambda\,=\,\det \ul{\ul R}f_* \cF$. Note that according to Grauert's theorem,
locally with respect to the base, the direct image $\ul{\ul R}f_* \cF$, as an
element of the derived category can be represented by a bounded complex
$\cL^\bullet$ of locally free sheaves on $S$ so that for the usual direct image
sheaves $R^qf_*\cF = \cH^q(\cL^\bullet)$ holds. The determinant line bundle
$\lambda(\cF)$ can be computed as determinant line bundle of the complex
$\cL^\bullet$. Bismut, Gillet, and Soulé identified it with a holomorphic line
bundle, for which they constructed a Quillen metric $h^Q$ and proved a generalized
Riemann-Roch theorem in the category of hermitian vector bundles, for $\dim
X\,=\,1$ see also the theorem of Zograf and Takhtadzhyan \cite{zt}.

\begin{theorem}[{Bismut-Gillet-Soulé \cite[Theorem 0.1]{bgs}}]
The Chern form of the determinant line bundle is equal to the fiber integral
\begin{equation}\label{eq:bgs}
c_1({\lambda,h^Q})= \left( \int_{\mathcal X/S} {\rm ch}(\cF,h) {\rm td}(\cX/S,\omega_{\cX})\right)_{(1,1)},
\end{equation}
where ${\rm ch}(\cF,h)$ and ${\rm td}(\cX/S,\omega_\cX)$ denote respectively the Chern character form for $(\cF,h)$ and the Todd character form for the relative
tangent bundle.
\end{theorem}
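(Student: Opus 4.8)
The statement to be established is the curvature formula for the Quillen metric on the determinant line bundle; since $h^Q$ incorporates the Ray--Singer analytic torsion, there is no purely algebraic proof, and the plan is to follow the heat-kernel and Bismut superconnection method. First I would realize $\lambda(\cF)$ analytically. Over the smooth proper family $f\colon\cX\to S$ one forms, locally over $S$, the infinite-dimensional $\mathbb Z$-graded bundle $\cH^\bullet$ whose fibre at $s$ is the space of smooth $(0,\bullet)$-forms on $X_s=f^{-1}(s)$ valued in $\cF|_{X_s}$, together with the fibrewise operator $\db_s$ and the adjoint $\db_s^*$ determined by $h$ and by $\omega_\cX$ restricted to the fibres. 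Hodge theory identifies the harmonic forms with $R^\bullet f_*\cF$, so that $\det\cH^\bullet\cong\lambda(\cF)$; the $L^2$ inner product induces the $L^2$ metric on $\lambda(\cF)$, and the Quillen metric $h^Q$ is this $L^2$ metric multiplied by the Ray--Singer analytic torsion $\exp(-\zeta_s'(0))$, where $\zeta_s$ is the spectral zeta function of the fibrewise Dolbeault Laplacian $\db_s\db_s^*+\db_s^*\db_s$.

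Next I would introduce the Bismut superconnection $\mathbb A_t$ on $\cH^\bullet$, assembled from the rescaled fibrewise Dolbeault operator $\sqrt t\,(\db_s+\db_s^*)$, a unitary connection on $\cH^\bullet$ coming from the Hermitian holomorphic structure of $f$, and a zero-form term built from the curvature (second fundamental form) of the fibration. The even forms $\mathrm{Tr}_s\bigl[\exp(-\mathbb A_t^2)\bigr]$ on $S$ are closed and represent the Chern character of the index bundle. Bismut's local family index theorem gives the small-time limit $\lim_{t\to0}\mathrm{Tr}_s[\exp(-\mathbb A_t^2)]=\int_{\cX/S}\ch(\cF,h)\,\mathrm{td}(T_{\cX/S},\omega_\cX)$, while as $t\to\infty$ the same forms converge to the Chern character of the $L^2$-metric on the cohomology bundle. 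The degree-two part of this interpolation, together with the variation of the torsion, is the core of the computation.

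Finally I would compute $\ii\,\pt\db\log h^Q$ directly, splitting it into the $L^2$ contribution and the torsion contribution coming from $\zeta_s'(0)$. The transgression formula comparing the two ends of the deformation $t\mapsto\mathbb A_t$ produces a boundary term that is precisely the anomaly of the analytic torsion, and the two non-topological pieces cancel, leaving exactly the $(1,1)$-component of $\int_{\cX/S}\ch(\cF,h)\,\mathrm{td}(T_{\cX/S},\omega_\cX)$. The hard part is entirely analytic: establishing the uniform small-time asymptotics of the supertrace of $\exp(-\mathbb A_t^2)$ in the base variable, and proving that the anomaly of $\zeta_s'(0)$ matches the difference between the fibrewise index density and the naive $L^2$ curvature. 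This requires Bismut's Clifford (Getzler) rescaling and sharp heat-kernel estimates; once these asymptotics and the $\cinf$-dependence of the torsion on $s$ are in hand, the remaining identification of differential forms is formal. In the fibre-dimension-one case one may instead argue more explicitly along the lines of Zograf--Takhtadzhyan, but the general statement seems to require the full superconnection apparatus.
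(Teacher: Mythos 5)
The paper does not prove this statement at all: it is imported verbatim as Theorem~0.1 of Bismut--Gillet--Soul\'e \cite{bgs} and used as a black box to endow the determinant line bundles $\delta'_\lambda$, $\delta''_\lambda$, $\delta'''_\lambda$ with Quillen metrics in Theorem~\ref{th2}, so there is nothing in the text to compare your argument against. Your outline is a faithful summary of the actual BGS strategy: the local realization of $\lambda(\cF)$ through the fibrewise Dolbeault complex and Hodge theory, the definition of $h^Q$ as the $L^2$ metric twisted by the Ray--Singer torsion $\exp(-\zeta_s'(0))$, the Bismut superconnection $\mathbb{A}_t$ whose rescaled supertrace interpolates between the fibre integral $\int_{\cX/S}\ch(\cF,h)\,\mathrm{td}(\cX/S,\omega_{\cX})$ as $t\to 0$ (local family index theorem) and the Chern character of the cohomology bundle with its $L^2$ metric as $t\to\infty$, and the double transgression in which the anomaly of the torsion cancels the discrepancy so that $\frac{\ii}{2\pi}\pt\db\log h^Q$ equals the $(1,1)$-component of the fibre integral \emph{as a form}, not merely in cohomology. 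That last point is the entire content of the theorem and you identify it correctly. Be aware, however, that what you have written is a road map rather than a proof: the uniform small-time asymptotics of $\mathrm{Tr}_s[\exp(-\mathbb{A}_t^2)]$ via Getzler rescaling, the $\cinf$ dependence of $\zeta_s'(0)$ on $s$, and the curvature computation for the $L^2$ metric on $\det\cH^\bullet$ across the locus where the dimensions of the $R^qf_*\cF$ jump are each substantial theorems in \cite{bgs}, and none of them is reduced to a checkable step in your sketch. For the purposes of this paper that is acceptable --- the result is legitimately cited --- but the proposal should not be mistaken for a self-contained proof.
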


In this section, we apply the results to the case, where $X$ is a \ka manifold
whose \ka form $\omega_X$ is the Chern form $c_1(\cL, h_\cL)$ of a positive
hermitian line bundle $(\cL, h_\cL)$. The base of a holomorphic family $S$ will be provided with a flat \ka form (which is immaterial for our arguments) inducing a \ka
structure on $X \times S$. The relative \ka form is $\omega_{\cX/S}$.

\subsection{Virtual bundles and Chern character forms}

Given any hermitian holomorphic vector bundle $(\cE,h)$ of rank $r$ on $Z=X\times S$, we introduce the virtual hermitian bundle (i.e.\ element of the Grothendieck group)
$$
(End(\cE)- \cO_Z^{\oplus r^2})\otimes (\cL -\cO_{Z})^{\otimes(n-1)}.
$$
The (virtual) rank is zero, and the first Chern class vanishes. We identify $\omega_{X\times S/S}$ with the pullback $q^*\omega_X$, where $q:X\times S \to X$ is the projection, and simply write $\omega_X$. Furthermore the trivial bundles carry a flat metric.

\begin{lemma}\label{le:Ch}
We have the following identities for Chern character forms, where we only note the lowest degree terms.
$$
\mathrm{ch}\left((End(\cE)-\cO^{\oplus r^2}_{X\times S} )\otimes (\cL - \cO_{X\times S} )^{\otimes(n-1)}\right)= \left(2 r \,\mathrm{ch}_2(\cE)- c^2_1(\cE)\right) \we \omega^{n-1}_X + \ldots
$$
and
$$
\mathrm{ch}\left( (\Lambda^r(\cE) - \cO_{X\times S})^{\otimes 2}\otimes (\cL - \cO_{X\times S} )^{\otimes(n-1)}\right) = c^2_1(\cE)\we \omega^{n-1}_X + \ldots
$$
\end{lemma}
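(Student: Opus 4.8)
The plan is to compute both Chern character forms purely formally in the Grothendieck ring, using the standard multiplicativity of the Chern character together with the fact that the virtual bundles involved have rank zero and vanishing first Chern class. The key point is that once the rank and $c_1$ of a virtual bundle vanish, its Chern character starts in degree $2$ (i.e.\ with $\mathrm{ch}_2$), and wedging against $\omega_X^{n-1}$ selects exactly this lowest-degree term for our purposes. I would carry this out in three short computations: first expand $\mathrm{ch}(\cL-\cO)$; then expand $\mathrm{ch}(End(\cE)-\cO^{\oplus r^2})$ and $\mathrm{ch}((\Lambda^r\cE-\cO)^{\otimes 2})$; then multiply and collect lowest-degree terms.

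First I would record $\mathrm{ch}(\cL-\cO_{X\times S}) = \mathrm{ch}(\cL)-1 = c_1(\cL) + \tfrac12 c_1(\cL)^2 + \ldots$, so that under the identification $\omega_X = c_1(\cL,h_\cL)$ its lowest-degree term is $\omega_X$. Consequently $\mathrm{ch}((\cL-\cO)^{\otimes(n-1)}) = (\omega_X + \ldots)^{n-1} = \omega_X^{n-1} + \ldots$, where the dots denote forms of degree $> 2(n-1)$; this factor therefore acts simply by producing $\omega_X^{n-1}$ at lowest order. Next, for $End(\cE)\cong \cE\otimes\cE^*$ one has $\mathrm{ch}(End(\cE)) = \mathrm{ch}(\cE)\,\mathrm{ch}(\cE^*) = (r + c_1 + \mathrm{ch}_2 + \ldots)(r - c_1 + \mathrm{ch}_2 + \ldots)$, where all Chern data are those of $(\cE,h)$. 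Subtracting the flat trivial bundle of rank $r^2$ kills the degree-$0$ term, and the degree-$2$ term collects to $2r\,\mathrm{ch}_2(\cE) - c_1(\cE)^2$ (the two linear terms in $c_1$ cancel, as expected since $c_1(End\,\cE)=0$). This is precisely the displayed lowest-order coefficient in the first identity, and wedging with $\mathrm{ch}((\cL-\cO)^{\otimes(n-1)})$ contributes the factor $\omega_X^{n-1}$.

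For the second identity I would use that $\det\cE = \Lambda^r\cE$ is a line bundle with $c_1(\Lambda^r\cE) = c_1(\cE)$, so $\mathrm{ch}(\Lambda^r\cE - \cO) = c_1(\cE) + \tfrac12 c_1(\cE)^2 + \ldots$. Squaring in the ring, the lowest surviving term of $\mathrm{ch}((\Lambda^r\cE-\cO)^{\otimes 2}) = \mathrm{ch}(\Lambda^r\cE-\cO)^2$ is $(c_1(\cE))^2 = c_1^2(\cE)$ in degree $4$, and again the $(\cL-\cO)^{\otimes(n-1)}$ factor supplies $\omega_X^{n-1}$ at lowest order, giving $c_1^2(\cE)\we\omega_X^{n-1} + \ldots$. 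The only mild subtlety worth stating explicitly, and the step I would treat most carefully, is the bookkeeping of degrees: because we track only lowest-degree terms, I must verify that no degree-$2$ contribution survives in either virtual factor (so that the product genuinely begins in total degree $2\cdot 2 + 2(n-1) = 2n+2$, i.e.\ as a $(1,1)$-form on $S$ times a top-degree form on $X$), and that wedging against $\omega_X^{n-1}$ does not promote any higher-degree terms into the relevant range after fiber integration. This degree tracking is routine but is the one place where an error could slip in, so it is where I would be most explicit.
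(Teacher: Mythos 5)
Your proof is correct. The paper states Lemma \ref{le:Ch} without any proof, treating it as a routine consequence of the multiplicativity of the Chern character, and your computation is precisely the argument it implicitly relies on: $\mathrm{ch}(\cL-\cO_{X\times S})=\omega_X+\ldots$, $\mathrm{ch}(End(\cE)-\cO^{\oplus r^2}_{X\times S})=2r\,\mathrm{ch}_2(\cE)-c_1^2(\cE)+\ldots$ with the degree-two terms cancelling because $c_1(End(\cE))=0$, and $\mathrm{ch}((\Lambda^r\cE-\cO_{X\times S})^{\otimes 2})=c_1^2(\cE)+\ldots$; your explicit check that each virtual factor begins in degree at least its claimed lowest degree is exactly the point that needs verifying, and the result is consistent with how the lemma is used in Proposition \ref{pr:owp} to recover Proposition \ref{propfb2}. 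The one point worth flagging is a normalization the paper itself leaves ambiguous: with the convention $\eta^k=\frac{1}{k!}\eta\we\cdots\we\eta$ introduced earlier, the lowest-degree term of $\mathrm{ch}\left((\cL-\cO_{X\times S})^{\otimes(n-1)}\right)=(e^{\omega_X}-1)^{n-1}$ is the plain $(n-1)$-fold wedge power, i.e.\ $(n-1)!\,\omega_X^{n-1}$ in that convention, so either the lemma's $\omega_X^{n-1}$ is to be read unnormalized or a factor $(n-1)!$ is being suppressed (harmless for Theorem \ref{th2}, which only asserts an integer multiple of $\omega_{WP}$); this is a defect of the statement rather than of your argument, which reproduces it at the same level of precision.
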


The base spaces $S$ are again subject to the above conditions from Section~\ref{se:def}.

\begin{definition}
Given a holomorphic vector bundle $\cE$ on $X \times S$, choose $x_0\in X$, and
denote by $m:X \times S \to \{x_0\}\times S$ the canonical projection
defined by $(x, s)\mapsto (x_0,\,s)$.
Define the pulled back bundle $\cE^0 :=m^* (\cE|\{x_0\} \times S)$ on $X \times S$, and
equip this bundle with the trivial hermitian metric.
\end{definition}

\begin{lemma}
For the lowest degree forms we have:
\begin{eqnarray}
&&\mathrm{ch}\left( (\Lambda^r(\cE) \otimes\Lambda^r(\cE^0)^{-1} - \cO_{X\times S} )\otimes (\cL - \cO_{X\times S} )^{\otimes n}\right) = c_1(\cE) \we \omega^{n}_X + \ldots\\
&&\mathrm{ch}\left( (\Lambda^r(\cE) \otimes\Lambda^r(\cE^0)^{-1} - \cO_{X\times S} )^{\otimes2}\otimes (\cL - \cO_{X\times S} )^{\otimes(n-1)}\right) = c^2_1(\cE) \we \omega^{n-1}_X + \ldots
\end{eqnarray}
\end{lemma}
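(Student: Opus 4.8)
The plan is to prove both identities by a short, purely formal computation with Chern character forms, in the same spirit as Lemma~\ref{le:Ch}. Set $D\,:=\,\Lambda^r(\cE)\otimes\Lambda^r(\cE^0)^{-1}$, which is a holomorphic line bundle on $X\times S$ with $c_1(D)\,=\,c_1(\Lambda^r\cE)-c_1(\Lambda^r\cE^0)\,=\,c_1(\cE)-c_1(\cE^0)$. The first point I would establish is that the auxiliary bundle $\cE^0$ makes no contribution at the level of Chern forms: by the standing assumption of Section~\ref{se:def} the base $S$ is Stein and contractible, so $\cE|_{\{x_0\}\times S}$ is holomorphically trivial, whence $\cE^0\,=\,m^*(\cE|_{\{x_0\}\times S})$ is trivial and its trivial hermitian metric is flat. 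Therefore $c_1(\cE^0)\,=\,0$ as a Chern--Weil form and $c_1(D)\,=\,c_1(\cE)$; the role of $\cE^0$ is only to normalize the metric on $D$ along $\{x_0\}\times S$, which is immaterial for the present form identities.

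Next I would record the two elementary inputs. For any holomorphic line bundle $L$ on $X\times S$ the convention \eqref{eq:chchar} gives $\mathrm{ch}(L-\cO_{X\times S})\,=\,e^{c_1(L)}-1\,=\,c_1(L)+\tfrac12 c_1(L)^2+\ldots$, and $\mathrm{ch}$ is multiplicative on tensor products in the Grothendieck group. Applying this to $\cL$, where $c_1(\cL,h_\cL)=\omega_X$ and $\omega_X^{n+1}=0$ on $X\times S$, gives $\mathrm{ch}((\cL-\cO_{X\times S})^{\otimes k})\,=\,(e^{\omega_X}-1)^k\,=\,\omega^k_X+(\text{terms of degree}>2k)$, the lowest-degree term being $\omega^k_X$; for $k=n$ the higher terms even vanish outright, since each carries a factor $\omega_X^{n+1}=0$.

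Finally I would multiply the two factors and isolate the term of lowest form-degree. For the first identity, $\mathrm{ch}\big((D-\cO_{X\times S})\otimes(\cL-\cO_{X\times S})^{\otimes n}\big)\,=\,\big(c_1(D)+\tfrac12 c_1(D)^2+\ldots\big)\we\omega^n_X$, whose lowest-degree part is $c_1(D)\we\omega^n_X\,=\,c_1(\cE)\we\omega^n_X$. For the second, $\mathrm{ch}\big((D-\cO_{X\times S})^{\otimes 2}\otimes(\cL-\cO_{X\times S})^{\otimes(n-1)}\big)\,=\,\big(c_1(D)^2+\ldots\big)\we\big(\omega^{n-1}_X+\ldots\big)$, whose lowest-degree part is $c_1(D)^2\we\omega^{n-1}_X\,=\,c^2_1(\cE)\we\omega^{n-1}_X$. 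This is exactly the assertion, the omitted ``$+\ldots$'' collecting the terms of degree larger than $2n+2$. I do not expect a genuine obstacle here: the Chern-character algebra is routine, and the only point requiring attention is the vanishing $c_1(\cE^0)=0$, i.e.\ the observation that over a contractible Stein base the normalizing bundle $\cE^0$ is flat, so that $c_1(D)$ may be replaced by $c_1(\cE)$ throughout.
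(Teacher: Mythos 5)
Your computation is correct and is essentially the argument the paper intends: this lemma (like Lemma~\ref{le:Ch} before it) is stated in the paper without proof, and its content is exactly the formal multiplicativity of $\mathrm{ch}$ applied to rank-zero virtual bundles, with the lowest-degree term of each factor isolated as you do. The one non-formal ingredient --- that $\cE^0$ with its trivial hermitian metric has vanishing Chern--Weil form, so that $c_1\big(\Lambda^r(\cE)\otimes\Lambda^r(\cE^0)^{-1}\big)=c_1(\cE)$ at the level of forms --- is precisely the point needing justification, and you supply it correctly from the holomorphic triviality of $\cE|_{\{x_0\}\times S}$ over the Stein contractible base.
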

We apply the above notions to a holomorphic family quiver bundles $\pmb{{\cR}}=\pmb{(\cE,\phi)}$ equipped with quiver vortex structures in an analogous way.

\begin{proposition}\label{pr:owp}
Let $\omega_X = c_1(\cL,h_\cL)$. Then the generalized \wp form satisfies the following equation.
\begin{eqnarray}\label{eq:owp}
\frac{1}{4\pi^2} \omega_{WP}&=& \bigg(\int_X \bigg(\sum_{\lambda\in Q'_0} \Big(-\frac{n_\lambda}{2 r_\lambda} \mathrm{ch}\left((End(\cE_\lambda)-\cO^{\oplus r^2_\lambda}_{X\times S} )\otimes (\cL - \cO_{X\times S} )^{\otimes(n-1)}\right) \\
&& - \frac{n_\lambda}{2 r_\lambda} \mathrm{ch}\left( (\Lambda^{r_\lambda}(\cE_\lambda)\otimes\Lambda^{r_\lambda}(\cE^0_\lambda)^{-1} - \cO_{X\times S})^{\otimes 2}\otimes (\cL - \cO_{X\times S} )^{\otimes(n-1)}\right)\nonumber \\
&& + \frac{\tau'_\lambda}{2\pi} \mathrm{ch}\left( (\Lambda^{r_\lambda}(\cE_\lambda)\otimes\Lambda^{r_\lambda}(\cE^0_\lambda)^{-1} - \cO_{X\times S})\otimes (\cL - \cO_{X\times S} )^{\otimes n}\right) \Big) \bigg)\mathrm{td}(X\times S/S) \bigg)_{(1,1)}\nonumber\\
&& + \frac{\ii}{2\pi}\pt\db\bigg(\frac{1}{2\pi}\int_X \sum_{a\in Q'_1} \tr (\phi_a \phi^*_a) \we \omega^n_X\bigg)\nonumber
\end{eqnarray}
\end{proposition}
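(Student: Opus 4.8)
The plan is to reduce Proposition \ref{pr:owp} to the already-established fiber integral formula of Proposition \ref{propfb2} by rewriting each summand of the latter as a component of a Chern character form of the virtual bundles introduced in Lemma \ref{le:Ch} and the subsequent Lemma. The key observation is that under the hypothesis $\omega_X = c_1(\cL,h_\cL)$, every power $\omega_X^k$ appearing in Proposition \ref{propfb2} equals $c_1(\cL)^k$, which is precisely the lowest-degree Chern character contribution of the virtual factor $(\cL - \cO_{X\times S})^{\otimes k}$. Hence the products $\mathrm{ch}_2(\cE_\lambda)\we\omega_X^{n-1}$ and $c_1(\cE_\lambda)\we\omega_X^n$ arise as the lowest-degree terms of Chern characters of tensor products of virtual bundles, exactly as asserted in the two lemmas.

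First I would record the numerical identity, following Proposition \ref{propfb2}, that $\tfrac{1}{4\pi^2}\omega_{WP}$ is a sum over $\lambda\in Q'_0$ of a term proportional to $n_\lambda\int_X \mathrm{ch}_2(\cE_\lambda)\we\omega_X^{n-1}$, a term proportional to $\tau'_\lambda\int_X c_1(\cE_\lambda)\we\omega_X^n$, and the $\pt\db$-exact correction $\pt\db\big(\tfrac{\ii}{4\pi^2}\sum_{a\in Q'_1}\int_X\tr(\phi_a\phi^*_a)\omega_X^n\big)$; the last of these already appears verbatim in the claimed formula \eqref{eq:owp}, so it may be carried through untouched. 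Next I would invoke Lemma \ref{le:Ch} to express $2r_\lambda\,\mathrm{ch}_2(\cE_\lambda)\we\omega_X^{n-1}$ as the sum of the lowest-degree Chern character of $(\End(\cE_\lambda)-\cO^{\oplus r_\lambda^2})\otimes(\cL-\cO)^{\otimes(n-1)}$ and that of $(\Lambda^{r_\lambda}(\cE_\lambda)-\cO)^{\otimes 2}\otimes(\cL-\cO)^{\otimes(n-1)}$; combining the two lemmas then lets me trade the factor $(\Lambda^{r_\lambda}(\cE_\lambda)-\cO)$ for $(\Lambda^{r_\lambda}(\cE_\lambda)\otimes\Lambda^{r_\lambda}(\cE^0_\lambda)^{-1}-\cO)$, since $\cE^0_\lambda$ carries a flat metric and contributes nothing to the lowest-degree Chern form. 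This replacement is what produces the precise virtual bundles written in \eqref{eq:owp} and is needed so that each factor has vanishing first Chern class on the relevant fibre, which makes the subsequent identification with a determinant bundle (in Theorem \ref{th2}) possible.

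Assembling the pieces, I would substitute these Chern-character expressions back into the formula of Proposition \ref{propfb2}, distributing the coefficients $-n_\lambda/(2r_\lambda)$ and $\tau'_\lambda/(2\pi)$ so that the three virtual-bundle terms in \eqref{eq:owp} emerge exactly. The final bookkeeping step is to insert the factor $\mathrm{td}(X\times S/S)$ and project onto the $(1,1)$-component: because the relative tangent bundle pulls back from $X$ and the leading Todd term is $1$, multiplying by $\mathrm{td}(X\times S/S)$ and extracting the $(1,1)$-part leaves the lowest-degree forms unchanged after fibre integration over $X$, so that $\big(\int_X(\cdots)\,\mathrm{td}(X\times S/S)\big)_{(1,1)}$ agrees with $\int_X(\cdots)$ computed in Proposition \ref{propfb2}.

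The main obstacle I anticipate is the precise degree-bookkeeping in the last step: I must verify that inserting $\mathrm{td}(X\times S/S)$ and taking the $(1,1)$-component genuinely reproduces the plain fibre integrals of Proposition \ref{propfb2}, rather than introducing extra contributions from higher Todd terms paired against lower-degree parts of the Chern characters. This requires checking that the virtual bundles have been arranged so that their Chern characters begin in the expected degree (so that only the degree-$0$ Todd term $1$ survives after fibre integration to degree $(1,1)$ on $S$), which is exactly why the rank-zero, vanishing-$c_1$ normalization in the two lemmas is essential; the remainder of the argument is the routine algebraic matching of coefficients.
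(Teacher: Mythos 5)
Your proposal is correct and follows essentially the same route as the paper: the paper's proof likewise reduces \eqref{eq:owp} to Proposition \ref{propfb2} via Lemma \ref{le:Ch} and the subsequent lemma, observing that the Chern character forms of these rank-zero, $c_1$-trivial virtual bundles begin in degree $(n+1,n+1)$, so that only the constant term $1$ of the Todd form survives the fiber integral. Your final ``obstacle'' is precisely the one-line observation the paper makes, and your coefficient bookkeeping (the $-n_\lambda/(2r_\lambda)$ combination cancelling the $c_1^2$ contributions to leave $-n_\lambda\,\mathrm{ch}_2(\cE_\lambda)\we\omega_X^{n-1}$) is exactly what is needed.
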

\begin{proof}
In all Chern character forms the lowest possible degree is $(n+1,n+1)$ so that for the evaluation of the fiber integral only the constant $1$ of the Todd character form contributes. The rest follows from Lemma~\ref{le:Ch}.
\end{proof}

\subsection{Determinant line bundles}

We introduce the following determinant line bundles,
\begin{eqnarray}
\delta'_\lambda&=& \det \ul{\ul R}q_*\left((End(\cE_\lambda)-\cO^{\oplus r^2}_{X\times S} )\otimes (\cL - \cO_{X\times S} )^{\otimes(n-1)}\right)\\
\delta''_\lambda&=&\det \ul{\ul R}q_*\left((\Lambda^{r_\lambda}(\cE_\lambda)\otimes\Lambda^{r_\lambda}(\cE^0_\lambda)^{-1} - \cO_{X\times S})\otimes (\cL - \cO_{X\times S} )^{\otimes n}\right)\\
\delta'''_\lambda&=&\det \ul{\ul R}q_*\left((\Lambda^{r_\lambda}(\cE_\lambda)\otimes\Lambda^{r_\lambda}(\cE^0_\lambda)^{-1} - \cO_{X\times S})^{\otimes 2}\otimes (\cL - \cO_{X\times S} )^{\otimes (n-1)}\right)
\end{eqnarray}
These will be equipped with the corresponding Quillen metrics $h_\lambda'$,
$h_\lambda''$, and $h_\lambda'''$ respectively. Furthermore we set
\begin{equation}
 \chi = \frac{1}{2\pi}\int_X \sum_{a\in Q'_1} \tr (\phi_a \phi^*_a) \we \omega^n_X
\end{equation}
and the trivial bundle $\cO_S$ will be equipped with the hermitian metric $e^{-\chi}$.

The theorem of Bismut, Gillet, and Soulé together with Proposition~\ref{pr:owp}
will imply the main result of this section.

\begin{theorem}\label{th2}
Let a family of stable quiver bundles $\pmb{{\wt\cR}}\,=\,\pmb{(\wt\cE,\wt\phi)}$, parameterized by a base space $S$, be given such that the numbers $\tau'_\lambda/2\pi$
in \eqref{eq:HE} are
rational. Then there exists a hermitian line bundle $(\delta, h^Q_\delta)$, whose curvature form is equal to a integer multiple of the \wp form. The construction is functorial, i.e.\ compatible with base change, and $(\delta,\, h^Q_\delta)$ descends to the moduli space.
\end{theorem}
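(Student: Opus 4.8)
The plan is to read off the desired line bundle directly from the fiber integral formula of Proposition~\ref{pr:owp}. That formula writes $\tfrac{1}{4\pi^2}\omega_{WP}$ as the $(1,1)$--part of a fiber integral of Chern character forms of precisely the three virtual bundles whose Knudsen--Mumford determinants are $\delta'_\lambda$, $\delta''_\lambda$ and $\delta'''_\lambda$, plus the term $\tfrac{\ii}{2\pi}\pt\db\chi$. First I would invoke the theorem of Bismut--Gillet--Soul\'e in the form \eqref{eq:bgs}. Since every Chern character form occurring in \eqref{eq:owp} has its lowest degree piece in bidegree $(n+1,n+1)$, only the constant term of $\mathrm{td}(X\times S/S)$ survives the fiber integral, and \eqref{eq:bgs} identifies each such integral with the Chern form of the corresponding determinant line bundle carrying its Quillen metric. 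Recalling that $(\cO_S,e^{-\chi})$ was metrized precisely so that its Chern form is the summand $\tfrac{\ii}{2\pi}\pt\db\chi$, this gives
\begin{gather*}
\frac{1}{4\pi^2}\omega_{WP}=\sum_{\lambda\in Q'_0}\left(-\frac{n_\lambda}{2r_\lambda}\,c_1(\delta'_\lambda,h'_\lambda)-\frac{n_\lambda}{2r_\lambda}\,c_1(\delta'''_\lambda,h'''_\lambda)+\frac{\tau'_\lambda}{2\pi}\,c_1(\delta''_\lambda,h''_\lambda)\right)\\
+\,c_1(\cO_S,e^{-\chi}).
\end{gather*}

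Second, I would clear denominators. The coefficients $n_\lambda/(2r_\lambda)$ are rational because $n_\lambda,r_\lambda$ are positive integers, and the coefficients $\tau'_\lambda/(2\pi)$ are rational by hypothesis. Hence there is a positive integer $N$ for which $a_\lambda:=Nn_\lambda/(2r_\lambda)$ and $b_\lambda:=N\tau'_\lambda/(2\pi)$ all lie in $\mathbb Z$. Setting
\begin{gather*}
\delta:=\bigotimes_{\lambda\in Q'_0}\Big((\delta'_\lambda)^{\otimes(-a_\lambda)}\otimes(\delta'''_\lambda)^{\otimes(-a_\lambda)}\otimes(\delta''_\lambda)^{\otimes b_\lambda}\Big)\otimes(\cO_S,e^{-\chi})^{\otimes N}
\end{gather*}
and giving it the induced tensor-product Quillen metric $h^Q_\delta$, the additivity of $c_1$ over tensor products yields $c_1(\delta,h^Q_\delta)=N\cdot\tfrac{1}{4\pi^2}\omega_{WP}$; this is the asserted integer multiple of the \wp form, up to the fixed normalizing constant $4\pi^2$ built into \eqref{eq:owp}.

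Functoriality is then automatic and can be checked factor by factor: the Knudsen--Mumford determinant commutes with base change, the Quillen metric is natural under pullback of families (both being part of the Bismut--Gillet--Soul\'e package), and the potential $\chi$ is itself a fiber integral, hence pulls back correctly. Therefore $(\delta,h^Q_\delta)$ is compatible with base change morphisms.

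The main obstacle is the final descent to the moduli space. As stable quiver bundles are simple, the automorphism group of each fiber is the group of scalars $\C^*$, acting as $c\cdot\mathrm{id}$ simultaneously on all $\cE_\lambda$; to descend $(\delta,h^Q_\delta)$ it suffices to show these scalars act trivially on every factor. This is exactly what the construction was designed for: a scalar $c$ acts on $\cE_\lambda$, and on its fiberwise restriction $\cE^0_\lambda$, by the same $c$, hence trivially on $\End(\cE_\lambda)=\cE_\lambda\otimes\cE_\lambda^*$ and trivially on $\Lambda^{r_\lambda}(\cE_\lambda)\otimes\Lambda^{r_\lambda}(\cE^0_\lambda)^{-1}$, where the two exterior powers contribute $c^{r_\lambda}$ and $c^{-r_\lambda}$. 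Thus the scalars act trivially on each virtual bundle, hence on each determinant line together with its induced Hermitian, and therefore Quillen, metric (the $|c|^{2r_\lambda}$ and $|c|^{-2r_\lambda}$ rescalings cancel). Finally $c$ sends $\phi_a$ to $c\,\phi_a\,c^{-1}=\phi_a$, so $\chi$ and the metric $e^{-\chi}$ are invariant as well. With the stabilizers acting trivially on $(\delta,h^Q_\delta)$, the standard descent for line bundles with trivial stabilizer action yields a hermitian line bundle on the moduli space whose Chern form is the same integer multiple of $\omega_{WP}$. I expect the only genuine work to be verifying that this triviality holds at the level of the Quillen metrics, and not merely of the underlying determinant lines, together with the gluing of the locally descended bundles over the reduced base.
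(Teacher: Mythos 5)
Your proposal is correct and follows essentially the same route as the paper: combine Proposition~\ref{pr:owp} with the Bismut--Gillet--Soul\'e theorem to identify each fiber-integral summand with the Chern form of $\delta'_\lambda$, $\delta''_\lambda$, $\delta'''_\lambda$ (and of $(\cO_S,e^{-\chi})$), clear denominators using the rationality of $\tau'_\lambda/2\pi$, and descend via the trivial action of the scalar automorphisms $\C^*$ on $\End(\cE_\lambda)$, on $\Lambda^{r_\lambda}(\cE_\lambda)\otimes\Lambda^{r_\lambda}(\cE^0_\lambda)^{-1}$, and on $\chi$. In fact your write-up is more explicit than the paper's own proof, which leaves the tensor-power construction of $\delta$ and the invariance of the Quillen metrics implicit.
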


\begin{proof}
Since we take a deformation theoretic viewpoint we need the construction of the moduli space from base spaces $S$. By assumption all quiver bundles are stable, in particular simple. The automorphism group $\C^*$ acts on the fibers, namely by multiplication on all $\cE_{\lambda s}$ and on the holomorphic maps $\phi_a$. This action leaves the bundles $End(\cE_{\lambda s})$ and $\Lambda^{r_\lambda}(\cE_\lambda)\otimes\Lambda^{r_\lambda}(\cE^0_\lambda)^{-1}$ invariant. Hence the determinant line bundles $\delta'_\lambda$, $\delta''_\lambda$, and $\delta'''_\lambda$ can be patched together and descend to the moduli space. In view of the quiver vortex equation, we are left with an action of $S_1$ so that the hermitian metric $e^{-\chi}$ on the trivial line bundle descends to the moduli space as well.
\end{proof}

\section{Curvature of the $L^2$-metric}

For a holomorphic family of stable quiver bundles together with solutions of the quiver vortex equation the harmonic representatives $\mu_i$ of classes in the hypercohomology $\mathbb H^1(C^\bullet)$ were given in \eqref{eq:mui}. At this point we assume that the base space is smooth (even though some of the statements below can be given a more general meaning).

So far, we identified tangent vectors $\pt/\pt s^i$ of the base of a universal family of holomorphic quiver bundles with harmonic generalized \ks forms $\mu_i$. The $L^2$ inner product was shown to define a natural \ka structure. Our aim is to compute the curvature and arrive at a result only in terms of harmonic \ks forms.

\subsection{Identities for generalized harmonic \ks tensors}
We will need two kinds of Laplace operators on the space $C^{0,0}$ of differentiable sections of $\oplus _{\lambda\in Q'_0} \End(\cE_\lambda)$. We set
$$
\Box^0=(d^0)^*d^0
$$
in the sense of Lemma~\ref{le:ellcompl} and Lemma~\ref{le:adj}. The usual Laplace operator on this space will be denoted by
$$
\Box= \ol\pt^*\ol\pt.
$$
\begin{lemma}
Consider the horizontal components $(R^\lambda_{i\ol\jmath})_{\lambda\in Q'_0}$ of the curvature. Then
\begin{equation}\label{eq:muijq}
\mu_{i;\ol\jmath}= d^{\, 0}\!\left( (\sqrt{n_\lambda} R^\lambda_{i \ol\jmath})_{\lambda\in Q'_0}\right)
\end{equation}
\end{lemma}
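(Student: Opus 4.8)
The plan is to verify the asserted identity slot-by-slot in $C^{1,0}\oplus C^{0,1}$. By the definition of $d^{\,0}$ in Lemma~\ref{le:ellcompl},
$$
d^{\,0}\!\left((\sqrt{n_\lambda}R^\lambda_{i\ol\jmath})_{\lambda}\right)=\left(\big(\Delta(\sqrt{n_\lambda}R^\lambda_{i\ol\jmath})\big)_a,\ \big(\db(\sqrt{n_\lambda}R^\lambda_{i\ol\jmath})\big)_\lambda\right),
$$
and the formula for $\Delta$ shows that the $C^{1,0}$-slot equals $R^{ha}_{i\ol\jmath}\phi_a-\phi_a R^{ta}_{i\ol\jmath}$, while the $C^{0,1}$-slot equals $\sqrt{n_\lambda}\,\db R^\lambda_{i\ol\jmath}$. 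On the other hand, $\mu_{i;\ol\jmath}$ is the covariant derivative of the family $\mu_i$ in the antiholomorphic base direction $\pt/\pt\ol s^{\,j}$, which, because the base carries a flat structure, is computed slot-by-slot using the Chern connections of the $\wt\cE_\lambda$ on $X\times S$. I would therefore show that the $C^{1,0}$-slot of $\mu_{i;\ol\jmath}$, namely $-\phi_{a;i\ol\jmath}$, matches $R^{ha}_{i\ol\jmath}\phi_a-\phi_a R^{ta}_{i\ol\jmath}$, and that its $C^{0,1}$-slot, namely $\sqrt{n_\lambda}\,R^\lambda_{i\ol\beta;\ol\jmath}\,dz^{\ol\beta}$, matches $\sqrt{n_\lambda}\,\db R^\lambda_{i\ol\jmath}=\sqrt{n_\lambda}\,R^\lambda_{i\ol\jmath;\ol\beta}\,dz^{\ol\beta}$.

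For the $\Hom$-slot I would argue exactly as in the proof of Proposition~\ref{pr:mui}. By Proposition~\ref{pr:impfct} the maps $\phi_a$ depend holomorphically on the parameter, so $\phi_{a;\ol\jmath}=0$ and hence $\phi_{a;\ol\jmath i}=0$. Commuting the two covariant derivatives on the bundle $\Hom(\cE_{ta},\cE_{ha})$ over $X\times S$ produces the curvature of this $\Hom$-bundle in the mixed direction $(i,\ol\jmath)$, which acts on $\phi_a$ by $R^{ha}_{i\ol\jmath}\phi_a-\phi_a R^{ta}_{i\ol\jmath}$. Fixing the sign as in the computation of $d^1(\mu_i)$, this yields $-\phi_{a;i\ol\jmath}=R^{ha}_{i\ol\jmath}\phi_a-\phi_a R^{ta}_{i\ol\jmath}$, which is precisely the first slot of $d^{\,0}$.

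For the $\End$-slot the required identity reduces to $R^\lambda_{i\ol\beta;\ol\jmath}=R^\lambda_{i\ol\jmath;\ol\beta}$. This is the second Bianchi identity for the Chern connection of $\wt\cE_\lambda$ on $X\times S$: writing the curvature as a global $(1,1)$-form with values in $\End(\wt\cE_\lambda)$, the identity $d^\nabla R^\lambda=0$ forces its covariant antiholomorphic derivative to be symmetric in the two barred indices, i.e.\ $\nabla_{\ol\jmath}R^\lambda_{i\ol\beta}=\nabla_{\ol\beta}R^\lambda_{i\ol\jmath}$, where here one of the holomorphic indices lies on $S$ and the two antiholomorphic ones are $\ol\beta$ (on $X$) and $\ol\jmath$ (on $S$). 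Since the base is flat, the frame $dz^{\ol\beta}$ on $X$ is parallel in the base directions, so $\db(\sqrt{n_\lambda}R^\lambda_{i\ol\jmath})=\sqrt{n_\lambda}\,R^\lambda_{i\ol\jmath;\ol\beta}\,dz^{\ol\beta}$ indeed coincides with the covariant $\pt/\pt\ol s^{\,j}$-derivative of $\sqrt{n_\lambda}\,R^\lambda_{i\ol\beta}\,dz^{\ol\beta}$.

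The main obstacle is purely bookkeeping: keeping the index and sign conventions consistent (which of a fibre index and a base index is barred, and the sign with which the $\Hom$-bundle curvature acts), and justifying $R^\lambda_{i\ol\beta;\ol\jmath}=R^\lambda_{i\ol\jmath;\ol\beta}$ as the correct specialization of the second Bianchi identity to one fibre index and one base index. Once the flatness of the base is invoked to identify covariant base-derivatives with ordinary ones and to treat $dz^{\ol\beta}$ as parallel, both slots follow directly, with no genuine analytic difficulty.
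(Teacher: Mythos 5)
Your proposal is correct and follows essentially the same route as the paper's own proof: a slot-by-slot verification in $C^{1,0}\oplus C^{0,1}$, using the definition of $d^{\,0}$, the holomorphy of the $\phi_a$ in the parameter together with the Ricci identity $\phi_{a;\ol\jmath i}-\phi_{a;i\ol\jmath}=R^{ha}_{i\ol\jmath}\phi_a-\phi_a R^{ta}_{i\ol\jmath}$ for the $\Hom$-slot, and the Bianchi-type symmetry $R^\lambda_{i\ol\beta;\ol\jmath}=R^\lambda_{i\ol\jmath;\ol\beta}$ for the $\End$-slot. Your explicit remarks on the $\sqrt{n_\lambda}$ cancellation in $\Delta$ and on flatness of the base are consistent with, and slightly more detailed than, the paper's one-line computation.
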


\begin{proof}
\begin{gather*}
d^{\, 0}\left( (\sqrt{n_\lambda}R^\lambda_{i \ol\jmath})_{\lambda\in Q'_0} \right) =
\left((R^{ha}_{i\ol\jmath}\phi_a - \phi_a R^{ta}_{i\ol\jmath})_{a\in Q'_1}, (\sqrt{n_\lambda}\db R^\lambda_{i\ol\jmath})_{\lambda\in Q'_0}\right) =
\qquad \strut \\ \qquad \left((\phi_{a;\ol\jmath i} - \phi_{a;i\ol\jmath})_a, (\sqrt{n_\lambda}R^\lambda_{i\ol\jmath;\ol\beta} dz^\ol\beta )_\lambda \right)= \left( (-\phi_{a;i\ol\jmath})_a ,(\sqrt{n_\lambda}R^\lambda_{i\ol \beta; \ol\jmath})_\lambda )\right)= \mu_{i;\ol \jmath}
\end{gather*}
\end{proof}

\begin{corollary}
The following identity holds:
\begin{equation}
(d^{\, 0})^* (\mu_{i;\ol\jmath}) = \Box^0\left((\sqrt{n_\lambda}R^\lambda_{i\ol\jmath})_\lambda \right).
\end{equation}
\end{corollary}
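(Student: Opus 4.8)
The plan is to obtain the identity immediately from the preceding lemma, equation \eqref{eq:muijq}, by applying the adjoint operator $(d^{\, 0})^*$. That lemma expresses the antiholomorphic covariant derivative $\mu_{i;\ol\jmath}$ of the harmonic \ks representative as the image under $d^{\, 0}$ of the horizontal curvature section $(\sqrt{n_\lambda}R^\lambda_{i\ol\jmath})_{\lambda\in Q'_0}\in C^{0,0}$.

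First I would apply the fixed linear operator $(d^{\, 0})^*\,:\,\wt C^1 \to \wt C^0$ of Lemma~\ref{le:adj} to both sides of \eqref{eq:muijq}, which yields
$$
(d^{\, 0})^*(\mu_{i;\ol\jmath}) = (d^{\, 0})^* d^{\, 0}\left((\sqrt{n_\lambda}R^\lambda_{i\ol\jmath})_{\lambda\in Q'_0}\right).
$$
Then I would invoke the definition $\Box^0 = (d^{\, 0})^* d^{\, 0}$ recorded just above, so that the right-hand side is exactly $\Box^0\big((\sqrt{n_\lambda}R^\lambda_{i\ol\jmath})_\lambda\big)$, which is the asserted identity.

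There is essentially no obstacle here, since the entire computational content already sits in \eqref{eq:muijq}; the corollary is a formal consequence of that lemma together with the definition of $\Box^0$. The one point worth recording is that covariant differentiation in the base direction $s^{\ol\jmath}$ commutes with $(d^{\, 0})^*$ on the nose: the parameter space carries a flat structure, and $(d^{\, 0})^*$ is the formal adjoint taken fiberwise over $X$, so its coefficients are parallel in the $S$-directions and no correction terms enter the application of the adjoint.
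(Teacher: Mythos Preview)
Your proposal is correct and matches the paper's approach: the corollary is stated without proof there, as it follows immediately by applying $(d^{\,0})^*$ to both sides of \eqref{eq:muijq} and invoking the definition $\Box^0=(d^{\,0})^*d^{\,0}$. Your final remark about commutation of $\pt_{\ol\jmath}$ with $(d^{\,0})^*$ is unnecessary, since you are simply applying a fixed operator to an already established identity, not differentiating through it.
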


For non-conjugate indices the definition implies the following symmetry.

\begin{lemma}
$$
\mu_{i;k}\,=\,\mu_{k;i}
$$
\end{lemma}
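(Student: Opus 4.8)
The plan is to prove the asserted symmetry componentwise, using the decomposition
$$
\mu_i \,=\, \left(-(\wt\phi_{a;i})_a,\ (\sqrt{n_\lambda}\,R^\lambda_{i\ol\beta}\,dz^{\ol\beta})_\lambda\right)\,\in\, C^{1,0}\oplus C^{0,1}
$$
from \eqref{eq:mui}. Covariant differentiation in the parameter direction $s^k$ acts diagonally on this direct sum, so it suffices to establish two interchange symmetries in the holomorphic parameter indices: for the $C^{1,0}$--part that $\phi_{a;ik}=\phi_{a;ki}$ for every arrow $a\in Q'_1$, and for the $C^{0,1}$--part that $R^\lambda_{i\ol\beta;k}=R^\lambda_{k\ol\beta;i}$ for every vertex $\lambda\in Q'_0$. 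Both should follow from the single structural fact that the relevant connection on the total space $X\times S$ is the Chern connection of the vortex metrics $k^\lambda$, combined with the holomorphic dependence of $\phi_a$ on $s$ from Proposition~\ref{pr:impfct} and the flatness of the parameter space.

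For the $C^{1,0}$--component I would argue as follows. The homomorphism $\phi_a$ is a section of $\Hom(\cE_{ta},\cE_{ha})$ over $X\times S$ depending holomorphically on the parameter, so $\phi_{a;\ol k}=0$. The iterated covariant derivatives $\phi_{a;ik}$ and $\phi_{a;ki}$ differ by the commutator $[\nabla_i,\nabla_k]\phi_a$, which equals the curvature endomorphism $F(\pt_i,\pt_k)$ of the induced Chern connection on $\Hom(\cE_{ta},\cE_{ha})$ acting on $\phi_a$ (there is no Christoffel contribution from the base, since the parameter space carries its flat structure). As $\pt_i$ and $\pt_k$ are both of type $(1,0)$, this is the $(2,0)$--part of the curvature, which vanishes identically for a holomorphic Hermitian bundle. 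Hence $\phi_{a;ik}=\phi_{a;ki}$. This is the same style of Ricci/commutation manipulation already used in the proof of Proposition~\ref{pr:mui}, where the relation $\phi_{a;\ol\beta i}=0$ was exploited.

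For the $C^{0,1}$--component the key is the second Bianchi identity. Writing $\Omega^\lambda$ for the full curvature $(1,1)$--form of $(\cE_\lambda,k^\lambda)$ on $X\times S$, with holomorphic coordinates collectively $(z^\alpha,s^i)$, the identity $d^\nabla\Omega^\lambda=0$ splits by type, and its $(2,1)$--part $\pt^\nabla\Omega^\lambda=0$ states precisely that the covariant derivative $R^\lambda_{A\ol B;C}$ is symmetric under interchange of the two holomorphic indices $A$ and $C$. Specializing $A=i$, $C=k$ (parameter directions) and $\ol B=\ol\beta$ (a base antiholomorphic direction) yields $R^\lambda_{i\ol\beta;k}=R^\lambda_{k\ol\beta;i}$. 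Combining the two components then gives $\mu_{i;k}=\mu_{k;i}$.

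The point requiring the most care is the bookkeeping of the single total-space Chern connection: one must verify that the semicolon derivatives $\phi_{a;ik}$ and $R^\lambda_{i\ol\beta;k}$ are taken with respect to the connection of $k^\lambda$ on $X\times S$, so that the vanishing of the $(2,0)$--curvature and the Bianchi identity apply uniformly to indices mixing base and parameter directions. Once this is pinned down, the statement reduces to a direct invocation of these two standard identities, which is why it follows essentially from the definition of $\mu_i$.
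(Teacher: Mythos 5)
Your proof is correct and fills in exactly the reasoning the paper leaves implicit: the authors state the lemma with only the remark that ``the definition implies'' the symmetry, and the two standard facts you invoke --- vanishing of the $(2,0)$--part of the Chern curvature for the commutator $[\nabla_i,\nabla_k]\phi_a$, and the $(2,1)$--part of the Bianchi identity for $R^\lambda_{i\ol\beta;k}=R^\lambda_{k\ol\beta;i}$ --- are precisely the identities the paper uses elsewhere (e.g.\ in the computations of $\mu_{i;\ol\jmath}$ and of $d^1(\mu_{i;k})$). Your care about taking all semicolon derivatives with respect to the single Chern connection on $X\times S$, with the flat structure on the parameter space killing any Christoffel contribution, is the right bookkeeping and matches the paper's conventions.
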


We will need the following identity.

\begin{lemma}\label{le:d0smuik}
$$
(d^{\, 0})^*(\mu_{i;k})\,=\,0\, .
$$
\end{lemma}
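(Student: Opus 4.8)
The plan is to compute $(d^{\,0})^*(\mu_{i;k})$ explicitly and to recognize it as (a multiple of) the second covariant $s$-derivative of the quiver vortex equation \eqref{eq:HE}, which must vanish because the right-hand side $\tau'_\lambda\,\mathrm{id}_{\cE_\lambda}$ is constant along $S$. This is the exact second-order analogue of the argument already used for $(d^{\,0})^*(\mu_i)=0$ in the proof of Proposition~\ref{pr:mui}.

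First I would write out the components of $\mu_{i;k}$. Differentiating \eqref{eq:mui} covariantly in the (flat) base direction $\pt/\pt s^k$, and using that the forms $dz^{\ol\beta}$ are parallel in base directions, gives
$$
\mu_{i;k}=\Big(-(\phi_{a;ik})_a,\,\big(\sqrt{n_\lambda}\,R^\lambda_{i\ol\beta;k}\,dz^{\ol\beta}\big)_\lambda\Big).
$$
Feeding this into the formula for $(d^{\,0})^*$ from Lemma~\ref{le:adj}, and using the convention $\db^*(\zeta_{\ol\beta}dz^{\ol\beta})=-g^{\ol\beta\alpha}\zeta_{\ol\beta;\alpha}$ (fixed so as to agree with Proposition~\ref{pr:mui}), the $\lambda$-component becomes
$$
\sqrt{n_\lambda}\,\big((d^{\,0})^*\mu_{i;k}\big)_\lambda=-\sum_{a\in\inv h(\lambda)}\phi_{a;ik}\phi^*_a+\sum_{a\in\inv t(\lambda)}\phi^*_a\phi_{a;ik}-n_\lambda\,g^{\ol\beta\alpha}R^\lambda_{i\ol\beta;k\alpha}.
$$

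Next I would differentiate \eqref{eq:HE} twice. Since each $\phi_a$ is holomorphic in $s$ one has $\nabla_{\ol k}\phi_a=0$, hence $\nabla_k\phi^*_a=(\nabla_{\ol k}\phi_a)^*=0$, while the fibre metric $g_{\alpha\ol\beta}(z)$ is independent of $s$. Applying $\nabla_i$ and then $\nabla_k$ to \eqref{eq:HE} therefore produces only the curvature term and the second-order terms $\phi_{a;ik}$:
$$
n_\lambda\,g^{\ol\beta\alpha}R^\lambda_{\alpha\ol\beta;ik}+\sum_{a\in\inv h(\lambda)}\phi_{a;ik}\phi^*_a-\sum_{a\in\inv t(\lambda)}\phi^*_a\phi_{a;ik}=0.
$$
Substituting this into the previous display cancels the $\phi_{a;ik}$-terms and leaves
$$
\sqrt{n_\lambda}\,\big((d^{\,0})^*\mu_{i;k}\big)_\lambda=n_\lambda\,g^{\ol\beta\alpha}\big(R^\lambda_{\alpha\ol\beta;ik}-R^\lambda_{i\ol\beta;k\alpha}\big).
$$

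Finally I would kill the remaining curvature difference using two identities for the Chern connection of $\cE_\lambda$ on $X\times S$. The second Bianchi identity gives the symmetry $R^\lambda_{\alpha\ol\beta;i}=R^\lambda_{i\ol\beta;\alpha}$ in the holomorphic indices, and differentiating in $s^k$ yields $R^\lambda_{\alpha\ol\beta;ik}=R^\lambda_{i\ol\beta;\alpha k}$; moreover, since $\pt/\pt z^\alpha$ and $\pt/\pt s^k$ are both of type $(1,0)$ and the curvature is of type $(1,1)$, the commutator $[\nabla_\alpha,\nabla_k]$ acts trivially on the $\End(\cE_\lambda)$-valued tensor $R^\lambda_{i\ol\beta}$, so $R^\lambda_{i\ol\beta;\alpha k}=R^\lambda_{i\ol\beta;k\alpha}$. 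Combining these two gives $R^\lambda_{\alpha\ol\beta;ik}=R^\lambda_{i\ol\beta;k\alpha}$, the bracket vanishes, and the lemma follows. The points that I expect to require the most care are exactly these last two — verifying the Bianchi symmetry in the present mixed base/fibre index setting and justifying the commutation of the $X$- and $S$-covariant derivatives on the mixed curvature — together with the bookkeeping that $\nabla_k\phi^*_a=0$, which is precisely what makes the double derivative of \eqref{eq:HE} close up onto $(d^{\,0})^*\mu_{i;k}$.
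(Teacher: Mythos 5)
Your argument is correct and is essentially the paper's own proof: apply the adjoint formula of Lemma~\ref{le:adj} to $\mu_{i;k}$, use $\phi^*_{a;k}=0$ and the Bianchi/commutation identities to rewrite the result as the covariant $s$-derivative(s) of the left-hand side of the quiver vortex equation \eqref{eq:HE}, which vanishes because the right-hand side is constant. You are merely more explicit than the paper about the two curvature identities ($R^\lambda_{\alpha\ol\beta;i}=R^\lambda_{i\ol\beta;\alpha}$ and the commutation of $\nabla_\alpha$ with $\nabla_k$), which the paper absorbs into the phrase ``by the above definitions and \eqref{eq:HE}.''
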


\begin{proof}
\begin{gather*}
(d^{\, 0})^*(\mu_{i;k})= \left( \frac{1}{\sqrt{n_\lambda}}\left( -\sum_{a\in \inv h(\lambda)}\phi_{a;i}\phi^*_a+ \sum_{a\in \inv t(\lambda)}\phi^*_a \phi_{a;i}\right) + \sqrt{n_\lambda}\db^*(R^\lambda_{i\ol\beta}dz^{\ol\beta})\right)_{\lambda\in Q'_0}=\\
\left(\frac{1}{\sqrt{n_\lambda}}\left(-\sum_{a\in \inv h(\lambda)}(\phi_{a}\phi^*_a)_{;i}+ \sum_{a\in \inv t(\lambda)}(\phi^*_a \phi_{a})_{;i} - n_\lambda (g^{\ol\beta\alpha}R^\lambda_{\alpha\ol\beta})_{;i}\right)\right)_{\lambda\in Q'_0}=0
\end{gather*}
by the above definitions and \eqref{eq:HE}.
\end{proof}

\begin{definition}
A symmetric exterior product $[\mspace\we\mspace] :\wt C^1 \times \wt C^1 \longrightarrow \wt C^2$ is defined by
$$
\Big[(\chi_a, \xi_\lambda)_{a,\lambda}\we(\psi_a,\zeta_\lambda )_{a,\lambda}\Big]
$$
$$
= \left(\frac{1}{\sqrt{n_{ha}}}( \xi_{ha}\circ \psi_a + \zeta_{ha}\circ \chi_a) -\frac{1}{\sqrt{n_{ta}}} (\psi_a\circ \xi_{ta}+\chi_a \circ\zeta_{ta})\, , \, \frac{1}{\sqrt{n_\lambda}} [\xi_\lambda \we \zeta_\lambda ]\right)_{a,\lambda}\, ,
$$
where the second component of the product is given by the Lie product on the bundles $End(\cE_\lambda)$ together with the wedge product of alternating forms.
\end{definition}

\begin{lemma}\label{le:wepr}
$$
d^1(\mu_{i;k})+ [\mu_i \we \mu_k]=0
$$
\end{lemma}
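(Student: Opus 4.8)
The plan is to work in local holomorphic coordinates $(z^1,\dots,z^n)$ on $X$ and to verify the asserted identity separately in the two slots of $\wt C^2=C^{1,1}\oplus C^{0,2}$. Starting from $\mu_i=\big(-(\phi_{a;i})_a,\,(\sqrt{n_\lambda}R^\lambda_{i\ol\beta}dz^{\ol\beta})_\lambda\big)$ as in \eqref{eq:mui} and differentiating covariantly in the flat parameter direction $\pt/\pt s^k$, the tensor $\mu_{i;k}$ has first slot $-(\phi_{a;ik})_a$ and second slot $(\sqrt{n_\lambda}R^\lambda_{i\ol\beta;k}dz^{\ol\beta})_\lambda$. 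Feeding this into the operator $d^1$ of Lemma~\ref{le:ellcompl} and expanding the symmetric product $[\mu_i\we\mu_k]$ from its definition, the claim becomes two coordinate identities: one for the $\Hom(\cE_{ta},\cE_{ha})$-valued $(0,1)$-form component, and one for the $\End(\cE_\lambda)$-valued $(0,2)$-form component.

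For the $(0,2)$-slot the $d^1$-term is $\sqrt{n_\lambda}\,R^\lambda_{i\ol\beta;k\ol\gamma}\,dz^{\ol\gamma}\we dz^{\ol\beta}$ and the bracket contributes $\sqrt{n_\lambda}(R^\lambda_{i\ol\beta}R^\lambda_{k\ol\gamma}-R^\lambda_{k\ol\gamma}R^\lambda_{i\ol\beta})dz^{\ol\beta}\we dz^{\ol\gamma}$, so after relabelling the dummy indices it suffices to antisymmetrize the coefficient of $dz^{\ol\beta}\we dz^{\ol\gamma}$. I would invoke two standard facts about the Chern connection: the second Bianchi identity, giving $R^\lambda_{i\ol\beta;\ol\gamma}=R^\lambda_{i\ol\gamma;\ol\beta}$ and therefore making $\nabla_k\nabla_{\ol\beta}R^\lambda_{i\ol\gamma}$ symmetric in $(\ol\beta,\ol\gamma)$, so that it drops out under antisymmetrization; and the Ricci (commutation) identity $\nabla_{\ol\beta}\nabla_k-\nabla_k\nabla_{\ol\beta}=-[R^\lambda_{k\ol\beta},\,\cdot\,]$ on sections of $\End(\cE_\lambda)$, which turns the surviving part of $R^\lambda_{i\ol\beta;k\ol\gamma}$ into exactly the curvature commutator appearing in $[\mu_i\we\mu_k]$. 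These cancel.

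For the $(0,1)$-slot the natural starting point is the vanishing of the first slot of $d^1(\mu_i)$ established in Proposition~\ref{pr:mui}, which expresses $\db(\phi_{a;i})$ in terms of $R^{ha}_{i\ol\beta}\phi_a-\phi_a R^{ta}_{i\ol\beta}$. I would differentiate this relation covariantly in $s^k$, and then reorder the third derivative $\phi_{a;ik\ol\beta}$ occurring in $\db(-\phi_{a;ik})$ into $\phi_{a;i\ol\beta k}$ by the Ricci identity on $\Hom(\cE_{ta},\cE_{ha})$, whose curvature acts by $R^{ha}_{k\ol\beta}\phi_{a;i}-\phi_{a;i}R^{ta}_{k\ol\beta}$. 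Substituting the differentiated relation together with these two commutation corrections, every term produced by $\db(-\phi_{a;ik})-\big(\Delta(\sqrt{n_\lambda}R^\lambda_{i\ol\beta;k}dz^{\ol\beta})\big)_a$ pairs off against a term of the bracket; in particular the corrections carrying both indices $i$ and $k$ are precisely the cross terms of $[\mu_i\we\mu_k]$.

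The computation is routine once the derivatives are ordered, and the only genuinely delicate point is the bookkeeping of the non-commuting covariant derivatives in the $X$-directions versus the parameter directions: every curvature bracket on the right of the identity arises solely as a commutation defect measured by the Ricci identity, which is the precise sense in which the second variation of the vortex data is controlled by the bracket of the first variations. I expect the signs in the Bianchi and Ricci identities to be the main source of error, so I would fix one convention at the outset and use it consistently across both slots.
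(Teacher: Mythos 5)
Your proposal is correct and follows essentially the same route as the paper's proof: a slot-by-slot computation in $\wt C^2=C^{1,1}\oplus C^{0,2}$, using the $\db$-closedness (Bianchi symmetry) of the curvature to kill the symmetric part in the $(0,2)$-component, the Ricci commutation identities on $\End(\cE_\lambda)$ and $\Hom(\cE_{ta},\cE_{ha})$ to reorder $\nabla_k$ past $\nabla_{\ol\beta}$, and the $k$-derivative of the relation $\phi_{a;\ol\beta i}=0$ from Proposition~\ref{pr:mui} to cancel the remaining terms against $[\mu_i\we\mu_k]$. Your sign conventions for the commutation defects also match those used in the paper, so carrying out the bookkeeping as outlined reproduces the printed computation.
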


\begin{proof}
\begin{eqnarray*}
d^1(\mu_{i;k})&=& d^1(-\phi_{a:ik} , \sqrt{n_\lambda} R^\lambda_{i\ol\beta;k}dz^\ol\beta)_{a,\lambda}\\
&=& \big( (-\phi_{a;ik\ol\beta}dz^\ol\beta - R^{ha}_{i\ol\beta;k}\phi_a + \phi_a R^{ta}_{i\ol\beta;k})dz^\ol\beta , \sqrt{n_\lambda} R^\lambda_{i\ol\beta;k\ol\delta} dz^\ol\delta\we dz^\ol\beta\big)_{a,\lambda}\\
&=& \big( (-\phi_{a;i\ol\beta k}+ R^{ha}_{k\ol\beta}\phi_{a;i} - \phi_{a;i}R^{ta}_{k\ol\beta} -R^{ha}_{i\ol\beta;k}\phi_a + \phi_a R^{ta}_{i\ol\beta;k})dz^\ol\beta,\\
&&\sqrt{n_\lambda}(R^\lambda_{i\ol\beta;\ol\delta k} -[R^\lambda_{k\ol\delta},R^\lambda_{i\ol\beta}])dz^\ol\delta\we dz^\ol\beta \big)_{a,\lambda}\\
 &=& \Big(\big( - (\phi_{a;\ol\beta i} - R^{ha}_{i\ol\beta}\phi_a + \phi_a R^{ta}_{i\ol\beta})_{;k} + R^{ha}_{k\ol\beta}\phi_{a;i}
\phi_{a;i}R^{ta}_{k\ol\beta} -R^{ha}_{i\ol\beta;k}\phi_a
+ \phi_a R^{ta}_{i\ol\beta;k}\big)dz^\ol\beta,\\
&&
\sqrt{n_\lambda} ([R^\lambda_{k\ol\delta}, R^\lambda_{i\ol\beta}]) dz^\ol\delta\we dz^\ol\beta
\Big)_{a,\lambda}\\
&=& \Big( (R^{ha}_{i\ol\beta}\phi_{a;k}- \phi_{a;k} R^{ta}_{i\ol\beta} + R^{ha}_{k\ol\beta}\phi_{a;i} - \phi_{a;i}R^{ta}_{k\ol\beta})dz^\ol\beta, - \sqrt{n_\lambda} ([R^\lambda_{k\ol\delta}, R^\lambda_{i\ol\beta}]) dz^\ol\delta\we dz^\ol\beta\Big)_{a,\lambda}\\
&=& -[\mu_i\we \mu_k].
\end{eqnarray*}
\end{proof}

\begin{lemma}\label{le:d*dxi}
For any section $(\xi_\lambda)_\lambda$ in $C^{0,0}$
$$
 \Box^0(\xi_\lambda)= \Bigg(
\sum_{a\in \inv t(\lambda)}\Big( \frac{1}{n_{\lambda}} \phi^*_a \phi_a \xi_\lambda
-\frac{1}{\sqrt{n_\lambda n_{h(a)}}} \phi^*_a \xi_{h(a)}\phi_a\Big)
$$
$$
+\sum_{a\in \inv h (\lambda)} \Big(\frac{1}{n_\lambda} \xi_\lambda \phi_a \phi^*_a - \frac{1}{\sqrt{n_\lambda n_{t(a)}}} \phi_a \xi_{t(a)}\phi^*_a\Big) + \Box(\xi_\lambda)
\Bigg)_{\!\!\lambda}
$$
holds.
\end{lemma}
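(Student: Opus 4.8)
The identity is a purely pointwise comparison of the two Laplacians $\Box^0 = (d^{\, 0})^* d^{\, 0}$ and $\Box = \db^*\db$ acting on $C^{0,0}$, so the plan is to compute $\Box^0(\xi_\lambda)$ directly from the explicit formula for $d^{\, 0}$ in Lemma~\ref{le:ellcompl} and the formula for $(d^{\, 0})^*$ in Lemma~\ref{le:adj}; no integration by parts beyond that already encoded in $\db^*$ is required. First I would apply $d^{\, 0}$ to a section $(\xi_\lambda)_{\lambda\in Q'_0}$, producing the pair $\left((\Delta(\xi))_a,\,(\db\xi_\lambda)_\lambda\right)\in C^{1,0}\oplus C^{0,1}$, where $(\Delta(\xi))_a = \frac{1}{\sqrt{n_{ha}}}\xi_{ha}\phi_a - \frac{1}{\sqrt{n_{ta}}}\phi_a\xi_{ta}$. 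Feeding this pair into $(d^{\, 0})^*$, the second slot contributes $\db^*\db\xi_\lambda = \Box(\xi_\lambda)$, which is exactly the last summand of the asserted formula, so the entire content of the lemma reduces to matching the zeroth-order part that comes from $\psi_a = (\Delta(\xi))_a$.

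For that algebraic part I would split according to the two incidence sums in the formula for $(d^{\, 0})^*$. For an arrow $a$ with $ha=\lambda$ (that is, $a\in\inv h(\lambda)$) the relevant contribution is $\frac{1}{\sqrt{n_\lambda}}\psi_a\phi^*_a$; substituting $(\Delta(\xi))_a = \frac{1}{\sqrt{n_\lambda}}\xi_\lambda\phi_a - \frac{1}{\sqrt{n_{ta}}}\phi_a\xi_{ta}$ yields precisely $\frac{1}{n_\lambda}\xi_\lambda\phi_a\phi^*_a - \frac{1}{\sqrt{n_\lambda n_{t(a)}}}\phi_a\xi_{t(a)}\phi^*_a$. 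Symmetrically, for an arrow $a$ with $ta=\lambda$ (that is, $a\in\inv t(\lambda)$) the contribution is $-\frac{1}{\sqrt{n_\lambda}}\phi^*_a\psi_a$; substituting $(\Delta(\xi))_a = \frac{1}{\sqrt{n_{ha}}}\xi_{ha}\phi_a - \frac{1}{\sqrt{n_\lambda}}\phi_a\xi_\lambda$ produces $\frac{1}{n_\lambda}\phi^*_a\phi_a\xi_\lambda - \frac{1}{\sqrt{n_\lambda n_{h(a)}}}\phi^*_a\xi_{h(a)}\phi_a$. Collecting the two incidence sums together with $\Box(\xi_\lambda)$ reproduces the stated expression.

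The one delicate point, and the only real obstacle, is the normalization bookkeeping: in each incidence sum exactly one endpoint of $a$ coincides with $\lambda$, so one factor $1/\sqrt{n_\lambda}$ from $(\Delta(\xi))_a$ combines with the outer $1/\sqrt{n_\lambda}$ of $(d^{\, 0})^*$ to give the coefficient $1/n_\lambda$ on the ``diagonal'' terms $\xi_\lambda\phi_a\phi^*_a$ and $\phi^*_a\phi_a\xi_\lambda$, whereas the opposite endpoint contributes the mixed weight $1/\sqrt{n_\lambda n_{t(a)}}$ or $1/\sqrt{n_\lambda n_{h(a)}}$ on the ``off-diagonal'' terms. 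Keeping these head/tail weights straight is what the computation turns on; once this is done, the identity holds as an equality of operators, verified fibrewise without any further analysis.
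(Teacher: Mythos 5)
Your computation is correct and is exactly the paper's argument: the paper disposes of this lemma with the single remark that it ``follows immediately from the definition of $d^{\,0}$ and $(d^{\,0})^*$,'' and your substitution of $\psi_a=\Delta(\xi)_a$, $\zeta_\lambda=\db\xi_\lambda$ into the formula of Lemma~\ref{le:adj}, with the head/tail weight bookkeeping you describe, is precisely that verification spelled out.
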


The lemma follows immediately from the definition of $d^{\,0}$ and $(d^{\,0})^*$.

\begin{lemma}\label{le:BoxR}
$$
\Box^0(\sqrt{n_\lambda}R^\lambda_{i\ol\jmath})= \bigg(-\sqrt{n_\lambda}g^{\ol\beta\alpha} \big[ R^\lambda_{\alpha\ol\jmath} ,R^\lambda_{i\ol\beta} \big] +\frac{1}{\sqrt{n_\lambda}}\big(\sum_{a\in \inv h(\lambda)} \phi_{a;i}\phi^*_{a;\ol\jmath} - \sum_{a\in \inv t(\lambda)} \phi^*_{a;\ol\jmath}\phi_{a;i}\big)
\bigg)_{\!\!\lambda}
$$
\end{lemma}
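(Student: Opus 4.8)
The goal is to compute $\Box^0(\sqrt{n_\lambda}R^\lambda_{i\ol\jmath})$ and identify it with the stated expression. The plan is to apply Lemma~\ref{le:d*dxi} directly, taking as the section $(\xi_\lambda)_\lambda$ the curvature components $(\sqrt{n_\lambda}R^\lambda_{i\ol\jmath})_\lambda$, and then to rewrite the purely Dolbeault part $\Box(\sqrt{n_\lambda}R^\lambda_{i\ol\jmath})$ using the differential-geometric identities for curvature tensors on a family of holomorphic Hermitian bundles. First I would substitute $\xi_\lambda = \sqrt{n_\lambda}R^\lambda_{i\ol\jmath}$ into the formula of Lemma~\ref{le:d*dxi}; the factor $\sqrt{n_\lambda}$ interacts with the normalizing factors $1/n_\lambda$ and $1/\sqrt{n_\lambda n_{h(a)}}$ (respectively $1/\sqrt{n_\lambda n_{t(a)}}$) so that the $\phi$-terms involving $R^\mu$ should combine and, after using the quiver vortex equation~\eqref{eq:HE} and its consequences, collapse into the terms $\phi_{a;i}\phi^*_{a;\ol\jmath}$ and $\phi^*_{a;\ol\jmath}\phi_{a;i}$ appearing on the right-hand side.

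The second ingredient is the ordinary Bochner-type computation of $\Box(\sqrt{n_\lambda}R^\lambda_{i\ol\jmath})=\ol\pt^*\ol\pt(\sqrt{n_\lambda}R^\lambda_{i\ol\jmath})$. Here I would use that the curvature of a holomorphic Hermitian vector bundle over a family satisfies a Bianchi-type symmetry, namely $R^\lambda_{i\ol\jmath;\ol\beta}=R^\lambda_{i\ol\beta;\ol\jmath}$ and the corresponding second-order commutator relation, so that applying $\ol\pt$ and then $\ol\pt^*$ produces, after commuting covariant derivatives, the Lie-bracket term $-\sqrt{n_\lambda}\,g^{\ol\beta\alpha}[R^\lambda_{\alpha\ol\jmath},R^\lambda_{i\ol\beta}]$. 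This is the geometric heart of the statement: the nonlinearity in the base direction comes from the Ricci-type commutator of two curvature components, and tracking it with the metric $g^{\ol\beta\alpha}$ in the correct index position is essential.

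The main obstacle I expect is the careful bookkeeping when combining the two contributions: the $\phi$-terms coming from $\Box^0-\Box$ via Lemma~\ref{le:d*dxi} must be matched, after differentiating~\eqref{eq:HE} and using the covariant-derivative identities (of the type already exploited in the proof of Proposition~\ref{pr:mui}, where $\phi_{a;i\ol\beta}=\phi_{a;\ol\beta i}$), against the derivatives $\phi_{a;i}\phi^*_{a;\ol\jmath}$. In particular one must verify that the undifferentiated curvature terms $R^\lambda_{\alpha\ol\beta}$ hidden inside the vortex equation cancel precisely against the non-bracket part of $\Box(\sqrt{n_\lambda}R^\lambda_{i\ol\jmath})$, leaving only the stated bracket and the two $\phi$-derivative sums. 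I would organize the computation so that each of the three summands on the right-hand side is produced by an explicitly identified source: the bracket from the Dolbeault Laplacian, and the two $\phi$-sums from the algebraic part of $\Box^0$ together with one differentiation of the quiver vortex equation~\eqref{eq:HE}.
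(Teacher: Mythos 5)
Your plan coincides with the paper's proof: it substitutes $\xi_\lambda=\sqrt{n_\lambda}R^\lambda_{i\ol\jmath}$ into Lemma~\ref{le:d*dxi}, rewrites the Dolbeault Laplacian term $-\sqrt{n_\lambda}g^{\ol\beta\alpha}R^\lambda_{i\ol\jmath;\ol\beta\alpha}$ via the Bianchi symmetry and the Ricci-type commutation (producing the bracket), and uses the differentiated quiver vortex equation \eqref{eq:HE} to cancel the undifferentiated curvature terms against the algebraic part of $\Box^0$ and produce the $\phi_{a;i}\phi^*_{a;\ol\jmath}$ sums. This is essentially the same argument, correctly organized.
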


\begin{proof}
We set $\xi_\lambda= \sqrt{n_\lambda}R^\lambda_{i\ol\jmath}$ in Lemma~\ref{le:d*dxi}.
\begin{gather*}
\Box(\sqrt{n_\lambda}R^\lambda_{i\ol\jmath})=
\Bigg(\frac{1}{\sqrt{n_\lambda}} \sum_{a\in \inv t(\lambda)} \Big( \phi^*_a\phi_a R^\lambda_{i\ol\jmath} - \phi^*_a R^{h(a)}_{i\ol\jmath}\phi_a \Big) + \\ \hspace{4cm} \frac{1}{\sqrt{n_\lambda}} \sum_{a\in \inv h (\lambda)} \Big( R^\lambda_{i\ol\jmath} \phi_a\phi^*_a- \phi_a R^{t(a)}_{i\ol\jmath} \phi^*_a \Big)
-\sqrt{n_\lambda}g^{\ol\beta\alpha} R^\lambda_{i\ol\jmath;\ol\beta\alpha}g^{\ol \beta\alpha}\Bigg)_{\!\!\lambda}.
\end{gather*}
From \eqref{eq:HE} we get
\begin{gather*}
\big(-n_\lambda g^{\ol\beta\alpha} R^\lambda_{i \ol\jmath;\ol\beta\alpha}\big)_\lambda =
\bigg(\sum_{a\in \inv h(\lambda)}\big( \phi_{a;i} \phi^*_{a;\ol\jmath} - R^\lambda_{i\ol\jmath}\phi_a\phi^*_{a} + \phi_a R^{t(a)}_{i\ol\jmath} \phi^*_{a}\big) \\ \hspace{4cm}
+ \sum_{a\in \inv t(\lambda)}\big(- \phi^*_{a;\ol\jmath}\phi_{a;i} + \phi^*_a R^{h(a)}_{i\ol\jmath}\phi_a - \phi^*_a \phi_a R^\lambda_{i\ol\jmath}
\big)
\bigg)_\lambda
\end{gather*}
which yields the lemma.
\end{proof}

\begin{definition}
A hermitian form $(\mspace\vee\mspace ): \wt C^1 \times \wt C^1 \to \wt C^0$ is defined by
$$
\Big((\psi_a, \xi_{\lambda,\ol\beta}dz^{\ol\beta}) \vee(\chi_a,\eta_{\lambda,\alpha}dz^\alpha)^*\Big)_\lambda= \Big(\sum_{a\in \inv h(\lambda)}\psi_a \chi^*_a - \sum_{a\in \inv t(\lambda)}\chi^*_a \psi_a - g^{\ol\beta\alpha}[ \xi_{\lambda,\ol\beta} ,\eta^*_{\lambda, \alpha} ]\Big)_\lambda
$$
\end{definition}

Now the statement of Lemma~\ref{le:BoxR} has the following form:

\begin{lemma}
$$
(d^{\,0})^*d^{\,0} \left((n_\lambda R^\lambda_{i\ol\jmath})_\lambda\right)= (\mu_i \vee \mu_\ol\jmath).
$$
\end{lemma}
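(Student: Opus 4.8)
The plan is to treat the assertion as a pure transcription of Lemma~\ref{le:BoxR} into the notation of the Hermitian pairing $(\,\vee\,)$, so that no new analytic input is required beyond the identities already proved. First I would substitute the explicit harmonic representative \eqref{eq:mui} into the definition of $(\,\vee\,)$. Writing $\mu_i$ in the form $(\psi_a,\,\xi_{\lambda,\ol\beta}dz^{\ol\beta})$ one reads off $\psi_a=-\phi_{a;i}$ and $\xi_{\lambda,\ol\beta}=\sqrt{n_\lambda}\,R^\lambda_{i\ol\beta}$; the conjugated slot $\mu_{\ol\jmath}$ contributes $\chi_a^{*}=-\phi^{*}_{a;\ol\jmath}$ and $\eta^{*}_{\lambda,\alpha}=\sqrt{n_\lambda}\,R^\lambda_{\alpha\ol\jmath}$, these being exactly the conjugations that reproduce the $G^{WP}$-pairing of Definition~\ref{dwp1}. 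Inserting this into the definition, the two homomorphism sums collapse, the double minus signs cancelling, to $\sum_{a\in\inv h(\lambda)}\phi_{a;i}\phi^{*}_{a;\ol\jmath}-\sum_{a\in\inv t(\lambda)}\phi^{*}_{a;\ol\jmath}\phi_{a;i}$, while the form term becomes $-g^{\ol\beta\alpha}[\sqrt{n_\lambda}R^\lambda_{i\ol\beta},\sqrt{n_\lambda}R^\lambda_{\alpha\ol\jmath}]=-n_\lambda\,g^{\ol\beta\alpha}[R^\lambda_{i\ol\beta},R^\lambda_{\alpha\ol\jmath}]$.

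Next I would compare this expression with the right-hand side of Lemma~\ref{le:BoxR}. The homomorphism part of $(\mu_i\vee\mu_{\ol\jmath})_\lambda$ is precisely $\sqrt{n_\lambda}$ times the $\phi$-part $\frac{1}{\sqrt{n_\lambda}}(\sum_{\inv h(\lambda)}\phi_{a;i}\phi^{*}_{a;\ol\jmath}-\sum_{\inv t(\lambda)}\phi^{*}_{a;\ol\jmath}\phi_{a;i})$ appearing there, and the curvature commutator matches $\sqrt{n_\lambda}$ times the term $-\sqrt{n_\lambda}g^{\ol\beta\alpha}[R^\lambda_{\alpha\ol\jmath},R^\lambda_{i\ol\beta}]$. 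Thus componentwise $(\mu_i\vee\mu_{\ol\jmath})_\lambda=\sqrt{n_\lambda}\,[\Box^0((\sqrt{n_\mu}R^\mu_{i\ol\jmath})_\mu)]_\lambda$. Finally, using $\mu_{i;\ol\jmath}=d^{\,0}((\sqrt{n_\lambda}R^\lambda_{i\ol\jmath})_\lambda)$ from \eqref{eq:muijq} together with $\Box^0=(d^{\,0})^{*}d^{\,0}$ and the preceding Corollary $(d^{\,0})^{*}(\mu_{i;\ol\jmath})=\Box^0((\sqrt{n_\lambda}R^\lambda_{i\ol\jmath})_\lambda)$, I would rewrite the matched right-hand side as $(d^{\,0})^{*}d^{\,0}$ applied to the curvature tuple, which is the asserted left-hand side.

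I expect the only genuinely delicate point to be the bookkeeping of the $\sqrt{n_\lambda}$ normalization together with the sign of the curvature commutator; everything else is the term-by-term cancellation already performed in the proof of Lemma~\ref{le:BoxR} through the quiver vortex equation \eqref{eq:HE} and Lemma~\ref{le:d*dxi}. On the one hand, the weight $\sqrt{n_\lambda}$ must be understood as multiplying the $\lambda$-component after $\Box^0$ has been applied to the $\sqrt{n_\mu}$-weighted tuple, since $\Box^0$ is not diagonal in $\lambda$ (it couples neighbouring vertices through $\xi_{h(a)}$ and $\xi_{t(a)}$), and only with this reading do the off-diagonal curvature contributions cancel against those produced by \eqref{eq:HE}. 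On the other hand, the definitions of $(\,\vee\,)$ and of the symmetric product $[\,\wedge\,]$ contract the $(0,1)$-part $\xi_{\lambda,\ol\beta}dz^{\ol\beta}$ against the $(1,0)$-part $\eta^{*}_{\lambda,\alpha}dz^\alpha$, so the ordering $dz^{\ol\beta}\wedge dz^\alpha=-dz^\alpha\wedge dz^{\ol\beta}$ implicit in the $g^{\ol\beta\alpha}$-contraction flips $[R^\lambda_{i\ol\beta},R^\lambda_{\alpha\ol\jmath}]$ into $-[R^\lambda_{\alpha\ol\jmath},R^\lambda_{i\ol\beta}]$; keeping this convention consistent with the one used in Lemma~\ref{le:BoxR} is exactly what makes the two commutators agree, and fixing it correctly is the heart of the verification.
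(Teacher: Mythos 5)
Your overall route is the same as the paper's: the paper offers no proof of this lemma at all, introducing it with the words ``Now the statement of Lemma~\ref{le:BoxR} has the following form'', so the intended argument is exactly your transcription --- substitute the harmonic representative \eqref{eq:mui} into the definition of the pairing $\vee$ and match the outcome term by term against Lemma~\ref{le:BoxR}.

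However, the two ``delicate points'' you isolate are precisely where the verification does not close, and your resolutions are assertions rather than proofs. First, the left-hand side of the lemma is $(d^{\,0})^*d^{\,0}$ applied to the tuple $(n_\lambda R^\lambda_{i\ol\jmath})_\lambda$, whereas Lemma~\ref{le:BoxR} computes $\Box^0$ of the tuple $(\sqrt{n_\lambda}\,R^\lambda_{i\ol\jmath})_\lambda$. By Lemma~\ref{le:d*dxi} the operator $\Box^0$ couples the vertex $\lambda$ to its neighbours through terms weighted by $1/\sqrt{n_\lambda n_{h(a)}}$ and $1/\sqrt{n_\lambda n_{t(a)}}$, so $\Box^0\big((n_\mu R^\mu_{i\ol\jmath})_\mu\big)_\lambda \neq \sqrt{n_\lambda}\,\Box^0\big((\sqrt{n_\mu}\,R^\mu_{i\ol\jmath})_\mu\big)_\lambda$ unless all multiplicities coincide: the cross terms acquire factors $\sqrt{n_{h(a)}/n_\lambda}$, and it is only the second (rescaled) expression whose cross terms cancel cleanly against \eqref{eq:HE}. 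Declaring that the weight ``must be understood as multiplying after $\Box^0$'' reinterprets the statement instead of proving it as written. Second, inserting $\mu_i$ and $\mu_{\ol\jmath}$ into the definition of $\vee$ produces the curvature term $-n_\lambda g^{\ol\beta\alpha}[R^\lambda_{i\ol\beta},R^\lambda_{\alpha\ol\jmath}] = +n_\lambda g^{\ol\beta\alpha}[R^\lambda_{\alpha\ol\jmath},R^\lambda_{i\ol\beta}]$, while $\sqrt{n_\lambda}$ times the corresponding term of Lemma~\ref{le:BoxR} is $-n_\lambda g^{\ol\beta\alpha}[R^\lambda_{\alpha\ol\jmath},R^\lambda_{i\ol\beta}]$; these are opposite, and the wedge-ordering sign you invoke is not available, since the bracket in the definition of $\vee$ is written on the coefficient endomorphisms, not on forms. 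Both discrepancies are arguably defects of the paper's own formulations of the lemma and of the pairing, but a complete proof must either pin down the conventions so that the terms genuinely agree or correct the statement; as it stands, your argument asserts the match at exactly the two places where it fails to hold literally.
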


\subsection{\wp metric -- connection form}

The metric tensor was given in \eqref{eq:wp}. We will use the notation
$\mspace|_k$ for derivatives $\pt_k = \pt/\pt s^k$ with respect to coordinates
on the base.

\begin{proposition}\label{pr:Gijk}
\begin{equation}
\pt_k G^{WP}_{i\ol\jmath}= \int_X \tr(\mu_{i;k}\mu^*_\ol\jmath)g\, dV\, .
\end{equation}
\end{proposition}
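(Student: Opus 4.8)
The plan is to read $G^{WP}_{i\ol\jmath}=\langle\mu_i,\mu_j\rangle$ from \eqref{eq:wp} as the $L^2$-pairing of the harmonic representatives \eqref{eq:mui} and to differentiate it as a Hermitian inner product. First I would record that the volume form $g\,dV$ on the fixed manifold $X$ carries no $s$-dependence, so that the entire dependence of the integrand on the parameter sits in the coefficient bundles over $X\times S$, namely in the metrics $k^\lambda$ on $\wt\cE_\lambda$ and in the holomorphic maps $\phi_a$. Viewing $\mu_i$ as a section over $S$ of the bundle of $\wt C^1$-valued forms, equipped with the $L^2$-metric and with the connection induced by the Chern connections of the family in the base directions, this connection is metric-compatible. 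Differentiating in the holomorphic direction $\pt_k$ therefore distributes onto the two factors,
\[
\pt_k\langle\mu_i,\mu_j\rangle=\langle\mu_{i;k},\mu_j\rangle+\langle\mu_i,\mu_{j;\ol k}\rangle,
\]
where $\mu_{i;k}$ and $\mu_{j;\ol k}$ are the covariant derivatives in the $s^k$- and $s^{\ol k}$-directions.

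The first term is already the asserted right-hand side, since $\langle\mu_{i;k},\mu_j\rangle=\int_X\tr(\mu_{i;k}\mu^*_{\ol\jmath})\,g\,dV$ by the definition of the $L^2$-pairing. Everything thus reduces to showing that the mixed term vanishes. For this I would invoke \eqref{eq:muijq} in the form $\mu_{j;\ol k}=d^{\,0}\big((\sqrt{n_\lambda}R^\lambda_{j\ol k})_{\lambda}\big)$, which exhibits $\mu_{j;\ol k}$ as an element of the image of $d^{\,0}$. Moving $d^{\,0}$ across the pairing by the $L^2$-adjunction of Lemma~\ref{le:adj} and using that $\mu_i$ is harmonic, in particular $(d^{\,0})^*\mu_i=0$ by Proposition~\ref{pr:mui}, gives
\[
\langle\mu_i,\mu_{j;\ol k}\rangle=\big\langle(d^{\,0})^*\mu_i,\,(\sqrt{n_\lambda}R^\lambda_{j\ol k})_\lambda\big\rangle=0,
\]
which completes the computation. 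Note that only the coclosedness of $\mu_i$ is used here, so the harmonicity of $\mu_i$ is exactly the feature that makes the holomorphic derivative of the metric collapse to a single term.

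The step I expect to be the main obstacle is the rigorous justification of the metric-compatible splitting, i.e.\ that $\pt_k$ of the $L^2$-pairing reproduces precisely the covariant derivatives $\mu_{i;k}$ and $\mu_{j;\ol k}$ and nothing more. The delicate bookkeeping concerns the $s$-dependence of the Hermitian metrics $k^\lambda$ that enter the adjoints $\phi^*_a$ and the raising of indices in $g^{\ol\beta\alpha}R^\lambda_{\alpha\ol\jmath}$: one must verify that the contributions obtained by differentiating these metric factors are exactly the Chern connection terms needed to upgrade the naive $\pt_k$-derivatives to the covariant $\mu_{i;k}$, $\mu_{j;\ol k}$. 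A more computational but equivalent route, closer to the explicit style of the preceding lemmas, would be to differentiate the two integrals in \eqref{eq:wp} termwise, integrate by parts on $X$, and use the quiver vortex equation \eqref{eq:HE} together with $(d^{\,0})^*\mu_i=0$ to cancel the surplus terms; this bypasses the infinite-dimensional formalism at the cost of a longer index calculation.
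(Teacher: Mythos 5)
Your proposal is correct and follows essentially the same route as the paper: differentiate the $L^2$-pairing to get $\langle\mu_{i;k},\mu_j\rangle+\langle\mu_i,\mu_{j;\ol k}\rangle$, then kill the second term by writing $\mu_{j;\ol k}=d^{\,0}\big((\sqrt{n_\lambda}R^\lambda_{j\ol k})_\lambda\big)$ via \eqref{eq:muijq} and using $(d^{\,0})^*\mu_i=0$ from Proposition~\ref{pr:mui}. The only difference is that you flag the metric-compatibility bookkeeping explicitly, which the paper takes for granted.
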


\begin{proof}
The integral
$$
\int_X\tr(\mu_i \mu^*_{\ol\jmath;k})g\/ dV
$$
is the $L^2$ product of $\mu_i$ and $d^{\,0}(\sqrt{n_\lambda}R^\lambda_{j\ol k})$ by \eqref{eq:muijq}. The harmonicity of $\mu_i$, i.e.\ $(d^{\,0})^*(\mu_i)=0$, according to Proposition~\ref{pr:mui} implies that $\int_X\tr(\mu_i \mu^*_{\ol\jmath;k})g\/ dV$ vanishes.
\end{proof}
At a given point $s_0\in S$ we introduce holomorphic normal coordinates of the second kind, which means that the \ka form coincides with the standard unitary one
in terms of the coordinate chart, up to order two at $s_0$, or equivalently, the partial derivatives of the local expression of the Hermitian metric vanish at $s_0$. Proposition~\ref{pr:Gijk} implies that this condition is equivalent to
the condition that the harmonic projections of all $\mu_{i;k}$ vanish (at $s_0$):
$$
H(\mu_{i;k}(s_0))=0 \text{ for all } i \text{ and } k.
$$
We denote by $G$ the (abstract) Green's operator on various levels for $\wt C^\bullet$.

\begin{lemma}\label{le:Grmuik}
At $s=s_0$ we have
\begin{equation}
\mu_{i;k}=- (d^1)^* G([\mu_i\we\mu_k]).
\end{equation}
\end{lemma}

\begin{proof}
$$
\mu_{i;k}= G\left((d^1)^*d^1 + d^0 (d^{\,0})^*\right)(\mu_{i;k})
$$
Now Lemma~\ref{le:d0smuik} and Lemma~\ref{le:wepr} together imply the lemma.
\end{proof}

\subsection{Computation of the curvature}
In terms of normal coordinates the curvature tensor is given by
$$
-R^{WP}_{i\ol\jmath k \ol\ell}= G^{WP}_{i\ol\jmath| k \ol\ell}= \int_X\tr(\mu_{i;k\ol\ell}\cdot \mu^*_\ol\jmath ) \, g\, dV + \int_X \tr(\mu_{i;k}\cdot \mu^*_{\ol\jmath;\ol\ell}) \, g\, dV .
$$
The second integral can be computed using Lemma~\ref{le:Grmuik}. It equals
$$
\int_X \tr\left( G([\mu_i\wedge \mu_k]) \cdot
[\mu^*_{\ol\jmath}\wedge\mu^*_{\ol\ell}] \right)\, g\, dV.
$$
We compute the first integral. We will need
$$
\mu_{i;k\ol\ell}= \big(-\phi_{a;ik\ol\ell}, \sqrt{n_\lambda}R^\lambda_{i\ol\beta;k\ol\ell}\big)_{a,\lambda},
$$
and the following simple consequences of the Ricci identities.
\begin{eqnarray}
-\phi_{a;ik\ol\ell} &=& R^{h(a)}_{i\ol \ell;k} \phi_a - \phi_a R^{t(a)}_{i\ol \ell;k} + R^{h(a)}_{i\ol\ell} \phi_{a;k}- \phi_{a;k}R^{t(a)}_{i\ol\ell} + R^{h(a)}_{k\ol\ell} \phi_{a;i}- \phi_{a;i}R^{t(a)}_{k\ol\ell}\\
R^\lambda_{i\ol\beta;k\ol\ell} &=& R^\lambda_{i\ol\ell;k\ol\beta} +[R^\lambda_{k\ol\beta},R^\lambda_{i\ol\ell}] +[R^\lambda_{i\ol\beta},R^\lambda_{k\ol\ell}]
\end{eqnarray}
Next the quiver vortex equation (and partial integration) imply
\begin{equation}\label{eq:Rilkbeta}
n_\lambda\int_X \tr\big( R^\lambda_{i\ol\ell;k\ol\beta} R^\lambda_{\alpha\ol\jmath} g^{\ol\beta\alpha}\big)\, g\, dV = \int_X \tr\Big(R^\lambda_{i \ol\ell;k}\Big( \sum_{a \in \inv h(\lambda)}\phi_a\phi^*_{a;\ol\jmath} -\sum_{a \in \inv t(\lambda)} \phi^*_{a;\ol\jmath} \phi_a \Big)\Big)\,g\, dV
\end{equation}

\begin{proposition}
Given a universal family of stable holomorphic quiver bundles, the curvature tensor of the generalized \wp metric equals
\begin{eqnarray}
 R^{WP}_{i\ol\jmath k \ol\ell}& = & -\int_X \tr\Big( G([\mu_i\we\mu_k]) [\mu^*_\ol\jmath\we\mu^*_\ol\ell]\Big)\,g\,dV \nonumber \\
 && + \int_X \sum_{\lambda\in Q'_0} n_\lambda \tr\big((d^{\,0})^*d^{\,0}(R^\lambda_{i\ol\jmath}) R^\lambda_{k\ol\ell}\big) \,g\,dV\\ \nonumber
 && + \int_X \sum_{\lambda\in Q'_0} n_\lambda \tr\big((d^{\,0})^*d^{\,0}(R^\lambda_{k\ol\jmath}) R^\lambda_{i\ol\ell}\big) \,g\,dV
\end{eqnarray}
\end{proposition}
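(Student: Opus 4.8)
The plan is to evaluate the two integrals in the displayed formula for $-R^{WP}_{i\ol\jmath k\ol\ell}$ preceding the statement, working throughout in the holomorphic normal coordinates of the second kind at $s_0$ for which $H(\mu_{i;k}(s_0))=0$. Write $I_1=\int_X\tr(\mu_{i;k\ol\ell}\cdot\mu^*_{\ol\jmath})\,g\,dV$ and $I_2=\int_X\tr(\mu_{i;k}\cdot\mu^*_{\ol\jmath;\ol\ell})\,g\,dV$, so that $-R^{WP}_{i\ol\jmath k\ol\ell}=I_1+I_2$ and hence $R^{WP}_{i\ol\jmath k\ol\ell}=-I_1-I_2$. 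The second integral has already been evaluated above by means of Lemma~\ref{le:Grmuik}, namely $I_2=\int_X\tr(G([\mu_i\we\mu_k])\,[\mu^*_{\ol\jmath}\we\mu^*_{\ol\ell}])\,g\,dV$; since it enters with the sign $-I_2$, this is exactly the first term of the asserted formula. It therefore remains only to show that $-I_1$ equals the two $\Box^0$-terms.

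To evaluate $I_1$ I would insert the components $\mu_{i;k\ol\ell}=(-\phi_{a;ik\ol\ell},\,\sqrt{n_\lambda}R^\lambda_{i\ol\beta;k\ol\ell})_{a,\lambda}$ together with those of $\mu^*_{\ol\jmath}$, so that the integrand splits into an arrow contribution $\sum_a\tr(\phi_{a;ik\ol\ell}\phi^*_{a;\ol\jmath})$ and a vertex contribution $\sum_\lambda n_\lambda g^{\ol\beta\alpha}\tr(R^\lambda_{i\ol\beta;k\ol\ell}R^\lambda_{\alpha\ol\jmath})$. Into the arrow contribution I substitute the Ricci identity for $-\phi_{a;ik\ol\ell}$, and into the vertex contribution the identity $R^\lambda_{i\ol\beta;k\ol\ell}=R^\lambda_{i\ol\ell;k\ol\beta}+[R^\lambda_{k\ol\beta},R^\lambda_{i\ol\ell}]+[R^\lambda_{i\ol\beta},R^\lambda_{k\ol\ell}]$, both displayed before the statement. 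The once-differentiated curvature term $\sum_\lambda n_\lambda\int_X\tr(R^\lambda_{i\ol\ell;k\ol\beta}R^\lambda_{\alpha\ol\jmath})g^{\ol\beta\alpha}g\,dV$ is then converted by \eqref{eq:Rilkbeta} into $\sum_\lambda\int_X\tr\bigl(R^\lambda_{i\ol\ell;k}(\sum_{a\in\inv h(\lambda)}\phi_a\phi^*_{a;\ol\jmath}-\sum_{a\in\inv t(\lambda)}\phi^*_{a;\ol\jmath}\phi_a)\bigr)g\,dV$.

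The decisive cancellation is that this converted term is exactly opposite to the contribution of the two terms $R^{h(a)}_{i\ol\ell;k}\phi_a-\phi_a R^{t(a)}_{i\ol\ell;k}$ of the arrow Ricci identity, once these are paired with $\phi^*_{a;\ol\jmath}$ and regrouped by head and tail vertex; hence all first covariant derivatives of the curvature drop out of $I_1$. The surviving terms are the four arrow expressions coming from $R^{h(a)}_{i\ol\ell}\phi_{a;k}-\phi_{a;k}R^{t(a)}_{i\ol\ell}$ and $R^{h(a)}_{k\ol\ell}\phi_{a;i}-\phi_{a;i}R^{t(a)}_{k\ol\ell}$, together with the two curvature commutators. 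Using cyclicity of the trace and the identity $\tr([R^\lambda_{i\ol\beta},R^\lambda_{k\ol\ell}]R^\lambda_{\alpha\ol\jmath})=\tr([R^\lambda_{\alpha\ol\jmath},R^\lambda_{i\ol\beta}]R^\lambda_{k\ol\ell})$ (together with its analogue for the second commutator), I would reassemble these, vertex by vertex, into $\tr(\Box^0(n_\lambda R^\lambda_{i\ol\jmath})R^\lambda_{k\ol\ell})$ and $\tr(\Box^0(n_\lambda R^\lambda_{k\ol\jmath})R^\lambda_{i\ol\ell})$, whose $\phi$-parts and commutator-parts are exactly those read off from Lemma~\ref{le:BoxR}. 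Since $n_\lambda(d^{\,0})^*d^{\,0}(R^\lambda_{i\ol\jmath})=\Box^0(n_\lambda R^\lambda_{i\ol\jmath})$, this shows $I_1=-\int_X\sum_\lambda n_\lambda\tr((d^{\,0})^*d^{\,0}(R^\lambda_{i\ol\jmath})R^\lambda_{k\ol\ell})\,g\,dV-\int_X\sum_\lambda n_\lambda\tr((d^{\,0})^*d^{\,0}(R^\lambda_{k\ol\jmath})R^\lambda_{i\ol\ell})\,g\,dV$, and negation yields the remaining two terms of the statement.

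The main obstacle is the sign- and index-bookkeeping of the previous paragraph: one must verify that the converted $R^\lambda_{i\ol\ell;k\ol\beta}$-term and the $R^{h(a)}_{i\ol\ell;k}\phi_a$-terms cancel with exactly matching signs, and that the six surviving families of terms reassemble, with the correct powers of $\sqrt{n_\lambda}$, into precisely the two Laplacian expressions of Lemma~\ref{le:BoxR} rather than some other combination. The symmetric roles played by $i$ and $k$ in the two $\Box^0$-terms of the conclusion are the structural indication that this regrouping is forced.
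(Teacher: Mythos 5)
Your proposal follows essentially the same route as the paper: the $I_2$ term is disposed of via Lemma~\ref{le:Grmuik} exactly as in the text preceding the proposition, and $I_1$ is evaluated by splitting into arrow and vertex contributions, substituting the two Ricci identities, cancelling the first covariant derivatives of the curvature against the converted term from \eqref{eq:Rilkbeta}, and reassembling the survivors into the two Laplacian expressions of Lemma~\ref{le:BoxR}. The decisive cancellation and the final regrouping you identify are precisely the steps carried out in the paper's proof.
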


\begin{proof}
We use the following (local) short-hand notation:
$$
[\xi_\lambda,\chi_a]:= \xi^{h(a)} \chi_a - \chi_a \xi^{t(a)}.
$$
Now, again by the holomorphicity of $\phi_a$, and the Ricci identities, we get
$$
\tr(\phi_{a;ik\ol\ell}\phi^*_{a;\ol\jmath})=-\tr\Big( [R^\lambda_{i\ol\ell;k}, \phi_a] + R^\lambda_{i\ol\ell} [\phi_{a;k}, \phi^*_{a;\ol\jmath}] + R^\lambda_{k\ol\ell} [\phi_{a;i}, \phi^*_{a;\ol\jmath}]
\Big)
$$
The contribution of the second components is
\begin{eqnarray*}
n_\lambda \tr\Big(g^{\ol\beta\alpha} R^\lambda_{i\ol\beta; k\ol\ell}R^\lambda_{\alpha\ol\jmath} \Big)&=& n_\lambda \tr\Big(g^{\ol\beta\alpha}\big( R^\lambda_{i\ol\ell;k\ol\beta} + [R^\lambda_{k\ol\beta}, R^\lambda_{i\ol\ell} ]+[R^\lambda_{i\ol\beta}, R^\lambda_{k\ol\ell} ] \big)R^\lambda_{\alpha\ol\jmath} \Big)\\
&=& n_\lambda \tr\Big(g^{\ol\beta\alpha}\big( R^\lambda_{i\ol\ell;k\ol\beta}+ [R^\lambda_{\alpha\ol\jmath}, R^\lambda_{k\ol\beta}]R^\lambda_{i\ol\ell} + [R^\lambda_{\alpha\ol\jmath}, R^\lambda_{i\ol\beta}]R^\lambda_{k\ol\ell}
\Big)
\end{eqnarray*}
We modify the first term by adding a term of divergence type and use \eqref{eq:HE}. We obtain
\begin{gather*}
n_\lambda \tr\Big(g^{\ol\beta\alpha}\big(- R^\lambda_{i\ol\ell;k}R^\lambda_{\alpha\ol\jmath;\ol\beta} +[R^\lambda_{\alpha\ol\jmath}, R^\lambda_{k\ol\beta}]R^\lambda_{i\ol\ell} + [R^\lambda_{\alpha\ol\jmath}, R^\lambda_{i\ol\beta}]R^\lambda_{k\ol\ell}
\Big)\\
= \tr\big( - R^\lambda_{i\ol\ell;k} [\phi_a,\phi^*_{a;\ol\jmath} ]\big) + n_\lambda \tr\Big([R^\lambda_{\alpha\ol\jmath}, R^\lambda_{k\ol\beta}]R^\lambda_{i\ol\ell} + [R^\lambda_{\alpha\ol\jmath}, R^\lambda_{i\ol\beta}]R^\lambda_{k\ol\ell} \Big)
\end{gather*}
These two equations imply
\begin{gather*}
\int_X\tr\Big(\sum_{a\in Q'_1} \phi_{a;ik\ol\ell}\phi^*_{a;\ol\jmath} + \sum_{\lambda\in '_0} g^{\ol\beta\alpha} R^\lambda_{i\ol\beta; k\ol\ell}R^\lambda_{\alpha\ol\jmath}
\Big)\,g\,dV = \hspace{6cm}\\
\int_X\tr\Big(-\sum_{a\in Q'_1}\big(R^\lambda_{i\ol\ell} [\phi_{a;k},\phi^*_{a;\ol\jmath}]+ R^\lambda_{k\ol\ell} [\phi_{a;i},\phi^*_{a;\ol\jmath}]\big)+ \sum_{\lambda\in '_0} g^{\ol\beta\alpha} R^\lambda_{i\ol\beta; k\ol\ell}R^\lambda_{\alpha\ol\jmath}
\Big)\,g\,dV + \\
n_\lambda\int_X \sum_{a\in Q'_1}\tr\Big( R^\lambda_{i\ol\ell}[\phi_{a;k}, \phi^*_{a;\ol \jmath} ] +R^\lambda_{k\ol\ell}[\phi_{a;i}, \phi^*_{a;\ol \jmath} ] \Big) \,g\,dV.
\end{gather*}
Finally Lemma~\ref{le:BoxR} implies the proposition.
\end{proof}

We denote the Green's operator on the spaces $\wt C^k$ by the letter $G$. As above we assume that the base of a universal local family is smooth. Note that in the following formula the quantity $\mu_i\we\mu_j$ need not vanish in dimension one -- only the contribution from the second component does, and it is necessary to assume that the first component vanishes, in order to get positive curvature for $\dim X= 1$.

\begin{theorem}\label{tf}
The curvature of the generalized \wp metric equals
\begin{eqnarray*}
R^{WP}_{i\ol\jmath k\ol\ell}&=&
-\int_X \left( G([\mu_i\wedge \mu_k]) \cdot [\mu^*_{\ol\jmath}\wedge\mu^*_{\ol\ell}] \right)\, g\, dV\\
&&+ \int_X \left( G([\mu_i\vee \mu_\ol\jmath]) \cdot [\mu^*_{k}\vee\mu^*_{\ol\ell}] \right)\, g\, dV\\
&&+ \int_X \left( G([\mu_k\vee \mu_\ol\jmath]) \cdot [\mu^*_{i}\vee\mu^*_{\ol\ell}] \right)\, g\, dV
\end{eqnarray*}
\end{theorem}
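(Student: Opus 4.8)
The plan is to evaluate the curvature as the second derivative of the metric tensor in the holomorphic normal coordinates of the second kind introduced after Proposition~\ref{pr:Gijk}, and then to recognize the resulting fiber integrals as the stated Green's-operator expressions. Differentiating the identity $\pt_k G^{WP}_{i\ol\jmath}=\int_X\tr(\mu_{i;k}\mu^*_{\ol\jmath})\,g\,dV$ of Proposition~\ref{pr:Gijk} once more in an anti-holomorphic direction, and using that in these coordinates the metric coefficients and their first derivatives vanish at $s_0$, produces
\[
-R^{WP}_{i\ol\jmath k\ol\ell}=\int_X\tr(\mu_{i;k\ol\ell}\,\mu^*_{\ol\jmath})\,g\,dV+\int_X\tr(\mu_{i;k}\,\mu^*_{\ol\jmath;\ol\ell})\,g\,dV.
\]
I would treat the two summands separately, exactly along the lines of the Proposition preceding the theorem, which I take as the intermediate form.

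For the second summand I would insert Lemma~\ref{le:Grmuik}, $\mu_{i;k}=-(d^1)^*G([\mu_i\we\mu_k])$, together with its conjugate for $\mu^*_{\ol\jmath;\ol\ell}$, and move one factor $(d^1)^*$ across the $L^2$ pairing to obtain $d^1$. Because $[\mu_i\we\mu_k]=-d^1(\mu_{i;k})$ by Lemma~\ref{le:wepr}, this class is both $d^1$-exact and $d^2$-closed, hence orthogonal to the harmonic space and annihilated by $d^2 G$; consequently $d^1(d^1)^*G$ acts as the identity on it, and the summand collapses to $-\int_X\tr(G([\mu_i\we\mu_k])[\mu^*_{\ol\jmath}\we\mu^*_{\ol\ell}])\,g\,dV$, the first term of the theorem. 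For the first summand I would substitute $\mu_{i;k\ol\ell}=(-\phi_{a;ik\ol\ell},\sqrt{n_\lambda}R^\lambda_{i\ol\beta;k\ol\ell})$, commute the triple covariant derivatives by the Ricci identities recorded above, and feed in the quiver vortex equation \eqref{eq:HE} together with the partial-integration identity \eqref{eq:Rilkbeta}. After the curvature and $\phi$-contributions reorganize, what survives is the pair of ``Laplacian'' integrals $\int_X\sum_\lambda n_\lambda\tr((d^{\,0})^*d^{\,0}(R^\lambda_{i\ol\jmath})R^\lambda_{k\ol\ell})\,g\,dV$ and the same with $i$ and $k$ interchanged.

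The remaining and decisive step is to rewrite these two Laplacian integrals in the $\vee$-form. Here I would use the hermitian reformulation of Lemma~\ref{le:BoxR}, which identifies $(d^{\,0})^*d^{\,0}$ applied to the horizontal curvature with the product $\mu_i\vee\mu_{\ol\jmath}$ (up to the normalizing factors $n_\lambda$). Writing $\Box^0=(d^{\,0})^*d^{\,0}$, the point is that $\Box^0 R_{i\ol\jmath}$ lies in the image of $\Box^0$, hence is orthogonal to $\ker\Box^0$, so $\Box^0 G\Box^0=\Box^0$ on it; combining this with the self-adjointness of $\Box^0$ and of $G$ and with $[\Box^0,G]=0$ gives
\[
\langle\Box^0 R_{i\ol\jmath},R_{k\ol\ell}\rangle=\langle G\Box^0 R_{i\ol\jmath},\Box^0 R_{k\ol\ell}\rangle.
\]
Replacing each factor $\Box^0 R$ by the corresponding $\mu\vee\mu$ converts the two Laplacian integrals into $\int_X(G([\mu_i\vee\mu_{\ol\jmath}])[\mu^*_k\vee\mu^*_{\ol\ell}])\,g\,dV$ and $\int_X(G([\mu_k\vee\mu_{\ol\jmath}])[\mu^*_i\vee\mu^*_{\ol\ell}])\,g\,dV$, which together with the commutator term yields the asserted formula.

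The main obstacle is the analysis of the first summand: the careful commutation of the third covariant derivatives through the Ricci identities and their reorganization by means of the vortex equation \eqref{eq:HE} and the partial integration \eqref{eq:Rilkbeta}, so that the $\phi$-terms and the curvature terms assemble exactly into $\Box^0$ applied to the horizontal curvature. By contrast the final Green's-operator reformulation is formal; its only delicate ingredient is the verification that the relevant tensors are co-exact, i.e.\ orthogonal to the harmonic space, which is what licenses replacing $G\Box^0$ by the identity.
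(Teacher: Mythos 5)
Your proposal follows the paper's own route essentially step for step: the second derivative of $G^{WP}_{i\ol\jmath}$ in normal coordinates of the second kind, Lemma~\ref{le:Grmuik} for the $\int\tr(\mu_{i;k}\mu^*_{\ol\jmath;\ol\ell})$ term, the Ricci-identity/vortex-equation reorganization of $\int\tr(\mu_{i;k\ol\ell}\mu^*_{\ol\jmath})$ via \eqref{eq:Rilkbeta} into the two $\Box^0$-integrals of the penultimate proposition, and finally $\langle\Box^0 u,v\rangle=\langle G\Box^0 u,\Box^0 v\rangle$ together with $(d^{\,0})^*d^{\,0}\left((n_\lambda R^\lambda_{i\ol\jmath})_\lambda\right)=(\mu_i\vee\mu_{\ol\jmath})$ --- this last Green's-operator step being exactly what the paper leaves implicit between that proposition and Theorem~\ref{tf}, and you supply it correctly. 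The only quibble is a sign in your phrasing of the second summand: as a summand of $-R^{WP}_{i\ol\jmath k\ol\ell}$ it equals $+\int_X\tr\left(G([\mu_i\we\mu_k])[\mu^*_{\ol\jmath}\we\mu^*_{\ol\ell}]\right)g\,dV$ (the two minus signs from Lemma~\ref{le:Grmuik} and its conjugate cancel), and only after passing to $R^{WP}$ does it become the minus-signed first term of the theorem, which is clearly what you intended.
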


\section*{Acknowledgements}

We are grateful to the referee for detailed comments to improve the exposition.
The first-named author acknowledges the support of a J. C. Bose Fellowship.

\end{document}